\DeclareMathOperator*{\argmin}{arg\,min}
\newtheorem{definition}{Definition}[section]
\newtheorem{theorem}[definition]{Theorem}
\newtheorem{lemma}[definition]{Lemma}
\def\blfootnote{\gdef\@thefnmark{}\@footnotetext}
\begin{document}

\blfootnote{\textup{2000} \textit{ AMS Subject Classification}: 62G08}
\def\spacingset#1{\renewcommand{\baselinestretch}%
{#1}\small\normalsize} \spacingset{1}


\title{The Trimmed Mean in Non-parametric Regression Function Estimation}
\author[1]{Subhra Sankar Dhar}
\author[2]{Prashant Jha}
\author[3]{Prabrisha Rakhshit}
\affil[1]{Department of Mathematics and Statistics, IIT Kanpur, India}
\affil[2]{Department of Mathematics and Statistics, IIT Kanpur, India}
\affil[3]{Department of Statistics, Rutgers University, USA}

\maketitle

\bigskip
\begin{abstract}
This article studies a trimmed version of the Nadaraya-Watson estimator to estimate the unknown non-parametric regression function. The characterization of the estimator through minimization problem is established, and its pointwise asymptotic distribution is derived. The robustness property of the proposed estimator is also studied through breakdown point. Moreover, as the trimmed mean in the location model, here also for a wide range of trimming proportion, the proposed estimator poses good efficiency and high breakdown point for various cases, which is out of the ordinary property for any estimator. Furthermore, the usefulness of the proposed estimator is shown for three benchmark real data and various simulated data.

\end{abstract}

\noindent%
{\it Keywords:} Heavy-tailed distribution; Kernel density estimator; $L$-estimator; Nadaraya-Watson estimator; Robust estimator.  

\spacingset{1.45}


\section{Introduction}

We have a random sample $(X_1, Y_1),\ldots ,(X_n, Y_n)$, which are i.i.d.\ copies of $(X, Y)$, and the regression of $Y$ on $X$ is defined as 
\begin{equation}
\label{model}
Y_i = g (X_i) + e_i, \quad i = 1, \ldots, n,
\end{equation} 
where $g(\cdot)$ is unknown, and $e_i$ are independent copies of error random variable $e$ with $E(e \, |\, X)=0$ and $var(e \, |\, X=x) = \sigma^2 (x) < \infty$ for all $x$. Note that the condition $E(e \, |\, X)=0$ is essentially the identifiable condition for mean regression, and it varies over the different procedures of regression. For instance, in the case of the median regression, the identifiable condition will be the median functional $(e | X) = 0$ or for the trimmed mean regression, it will be the trimmed mean functional $(e | X) = 0$.

There have been several attempts to estimate the unknown non-parametric regression function $g (\cdot)$; for overall exposure on this topic, the readers are referred to \citeA{priestley1972non}, \citeA{clark1977non} and \citeA{gasser1979kernel}. Among well-known estimators of the regression function, the Nadaarya-Watson estimator (see \citeA{nadaraya1965non} and \citeA{watson1964smooth} ) is one of the most classical estimator, and it has been used in different Statistical methodologies. Some other classical estimators of the regression function, namely, Gasser-M\"uller \cite{gasser1979kernel} and Priestley-Chao \cite{priestley1972non} estimators are also well-known in the literature. In this context, we should mention that all three aforesaid estimators are based on kernel function; in other words, these estimators are examples of kernel smoothing of regression function. In fact more generally speaking, one may consider local polynomial fitting as a kernel regression smoother, and the fact is that Nadaraya-Watson estimator is nothing but a local constant kernel smoother.  

Note that as it is mentioned in the previous paragraph, Nadaraya-Watson estimator can be obtained from a certain minimization problem related to the weighted least squares methodology, where the weights are the functional values of the kernel function evaluated at data points. This fact further indicates that it is likely to be less efficient in the presence of the outliers or influential observations in the data. To overcome this problem, we here propose the trimmed version of Nadaraya-Watson estimator, which can be obtained as the minimizer of a certain minimization problem. It is also of interest to see how this estimator performs compared to the classical Nadaraya-Watson estimator (i.e., based on the usual least squares methodology) when data follow various distributions. For instance, it should be mentioned that for the location parameter of Cauchy distribution, neither the sample mean nor the sample median, 0.38-trimmed mean is the most efficient estimator for the location parameter of Cauchy distribution as pointed out by \citeA{dhar2016trimmed}. In fact, such a nice combination of efficiency and robustness properties of the trimmed mean for various Statistical model (see, e.g., \citeA{dhar2009comparison,dhar2012derivatives}, \citeA{dhar2016trimmed}, \citeA{vcivzek2016generalized}, \citeA{park2015robust}, \citeA{wang2019robust} and references therein) motivated us to propose this estimator and study its behaviour. For the classical references of the trimmed mean, one may look at \citeA{bickel1965some}, \citeA{hogg1967some}, \citeA{jaeckel1971some}, \citeA{stigler1973asymptotic}, \citeA{welsh1987trimmed}, \citeA{jureckova1994regression} and \citeA{jurevckova1994adaptive}.

The contribution of this article is three fold. The first fold is to propose an entirely new estimator of the non-parametric regression function, which was never studied in the literature before. The next fold is the derivation of the asymptotic distribution of the proposed estimator. As the proposed estimator is based on the order statistic, one cannot use the classical central limit theorem directly; it requires advanced technicalities associated with order statistic to obtain the asymptotic distribution. The last fold is the formal study of the robustness property of the proposed estimator using the concept of breakdown point.

As said before, one of the main crux of the problem is to show the proposed estimator as a minimizer of a certain minimization problem, and that enables us to explain the geometric feature of the estimator. Besides, another difficulty involved in deriving the asymptotic distribution is dealing the order statistics in the non-parametric regression set up. For this reason, one cannot use the classical central limit theorem to establish the asymptotic normality of the estimator after appropriate normalization. Moreover, the presence of kernel function in the expression of the estimator also made challenging to establish the breakdown point of the estimator.

The rest of this article is arranged as follows. Section 2 proposes the estimator and shows how it obtains from the minimization problem. Section 3 provides the large sample properties of the proposed estimator, and the robustness property of the estimator is studied in Section 4. Section 5 presents the finite sample study, and the performance of the estimator for a few benchmark data set is shown in Section 6.  Section 7 contains a few concluding remarks. The proof of Theorem \ref{T1} along with the related lemmas is provided in Appendix A, and Appendix B contains all results of numerical studies in tabular form.


\section{Proposed Estimator}	

Let $(X_{1}, Y_{1}), \ldots, (X_n, Y_n)$ be an i.i.d.\ sequence of random variables having the same joint distribution of $(X, Y)$ and recall the model (\ref{model}). The well-known Nadaraya-Watson estimator is defined as $$\hat{g}_{n, NW} (x_{0}) = \frac{\sum\limits_{i = 1}^{n} k_{n}\left(X_{i} - x_{0}\right) Y_{i}}{\sum\limits_{i = 1}^{n} k_{n}\left(X_{i} - x_{0}\right)},$$ where $\displaystyle k_n (\cdot) = \frac{1}{h_n} K\left(\frac{\cdot}{h_n}\right)$, and $K(\cdot)$ is a symmetric kernel function, i.e., $K$ is a non negative kernel with support $[-\tau ,\tau]$, $\displaystyle \int\limits_{-\tau}^{\tau} K(u) du = 1$ and $K(-u) = K(u)$ for all $u\in [-\tau, \tau]$.  Besides, $\{h_{n}\}$ is a sequence of bandwidth, such that $h_n \rightarrow 0$ as $n\rightarrow \infty$ and $nh_n \rightarrow \infty $ as $n\rightarrow \infty$. Note that $\hat{g}_{n, NW} (x_{0})$ can be expressed as a solution of the following minimization problem :  

\begin{equation}
\label{argmin}
\hat{g}_{n, NW} (x_{0}) = \argmin\limits_\theta \sum\limits_{i=1}^{n} \left(Y_i - \theta \right)^2 k_n \left( X_i - x_{0}\right).
\end{equation}

Note that the above formulation implies that Nadaraya-Watson estimator is a certain weighted average estimator, which can be obtained by weighted least squares methodology. It is a well-known fact that the (weighted) least squares methodology is not robust against the outliers or influential observations (see, e.g., \citeA{huber1981robust}), and to overcome this problem related to the robustness against the outliers, we here study the trimmed version of the weighted least squares methodology, which obtains the local constant trimmed estimator of the non-parametric regression function, i.e., the trimmed version of Nadaraya-Watson estimator. Let us now define the estimator formally, which is denoted by $\hat{g}_{n,\alpha} (\cdot)$ for $\alpha\in [0, \frac{1}{2})$.

\begin{equation}
\label{trimargmin}
\hat{g}_{n,\alpha} (x_{0})=\argmin\limits_\theta \sum\limits_{i = [n \alpha] +1}^{n -[n\alpha ]} \left( Y_{[i]} -\theta \right)^2 k_n \left( X_{(i)} -x_{0} \right),
\end{equation}
where $X_{(i)}$ is the $i$-th ordered observation on variable $X$, and $Y_{[i]}$ denotes the observation on variable $Y$ corresponding $X_{(i)}$. Solving \eqref{trimargmin}, we have 
\begin{equation}
\label{trimnwe}
\hat{g}_{n,\alpha} (x_{0}) = \frac{\sum\limits_{i = [n\alpha] +1}^{n -[n\alpha] } k_n \left( X_{(i)} -x_{0}\right) Y_{[i]} }{ \sum\limits_{i = [n\alpha] +1}^{n -[n\alpha]} k_n \left(X_{(i)} -x_{0}\right)}. 
\end{equation}
Here it should be mentioned that $\alpha\in [0, \frac{1}{2})$ is the trimming proportion, and in particular, for $\alpha = 0$, $\hat{g}_{n,\alpha} (\cdot)$ will coincide with  $\hat{g}_{n, NW} (\cdot)$. i.e., usual Nadaraya-Watson estimator. Further, note that here $Y_{[i]} = g(X_{(i)}) + e_{[i]}$ for $i = 1, \ldots, n$, where $e_{[i]}$ denotes the error corresponding to $(X_{(i)},Y_{[i]})$. 

We now want to add one discussion on possible extension of this estimator. As it follows from (\ref{argmin}), Nadaraya-Watson estimator is a local constant estimator, and in this context, it should be added that there has been an extensive literature on local linear (strictly speaking, local polynomial) estimator of non-parametric regression function. One of the advantage of local linear or generally speaking local polynomial estimator is, it gives a consistent estimator of a certain order derivatives of the regression function as well (see, e.g., \citeA{fan1996local}) but on the other hand, it will enhance the overall standard error as well. Following the same spirit, one can consider local linear or polynomial trimmed mean  of non-parametric regression function, and it will be an interest of future research. 

This section ends with another discussion on the choice of the tuning parameter $\alpha$ involved in $\hat{g}_{n,\alpha} (x_{0})$. Apparently, our efficiency study (see in Section \ref{AE}) and simulation study (see in Section \ref{FSS}) indicate that for a wide range of $\alpha$, $\hat{g}_{n,\alpha} (x_{0})$ has good efficiency property for various distributions. In contrast, in terms of the robustness against the outliers, $\hat{g}_{n,\alpha} (x_{0})$ attains the highest breakdown point when $\alpha$ attains its largest value (see Section \ref{breakdownpoint}). However, since there is a trade-off between efficiency and robustness of an estimator, choosing the largest value of $\alpha$ in practice may originate an estimator having poor efficiency. In order to maintain the best efficiency and a reasonably good breakdown point, one may estimate the trimming proportion $\alpha$ (denote it as $\hat{\alpha}$), which minimizes the estimated asymptotic variance of $\hat{g}_{n,\alpha} (x_{0})$ after appropriate normalization. However, Statistical methodology based on $\hat{g}_{n,\hat{\alpha}} (x_{0})$ will be difficult to implement because of its intractable nature. Overall, the choice of $\alpha$ is an issue of concern to use $\hat{g}_{n,\alpha} (x_{0})$ in practice.


\section{Asymptotic distribution of $\hat{g}_{n,\alpha} (\cdot)$}

In order to implement any Statistical methodology based on $\hat{g}_{n,\alpha} (\cdot)$, one needs to know the distributional behaviour of $\hat{g}_{n,\alpha} (\cdot)$. However, due to complicated form of $\hat{g}_{n,\alpha} (\cdot)$, it is intractable to derive the exact distribution, which drives us to study the asymptotic distribution of $\hat{g}_{n,\alpha} (\cdot)$. This section describes the pointwise asymptotic distribution of $\hat{g}_{n,\alpha} (\cdot)$ after appropriate normalization. To prove the main result, one needs to assume the following conditions. 

\vspace{0.15in}

\noindent {\bf Assumptions : }

\begin{itemize}

\item[(A1)] The regression function $g(\cdot)$ is a real valued continuously twice differentiable function on a compact set. 

\item[(A2)] The probability density function of the covariate random variable $X$, which is denoted by $f_{X}$, is a bounded function. 

\item[(A3)] The kernel $K(\cdot)$ is a bounded probability density function with support $(-\tau,\tau)$ such that, \\
(a) $\displaystyle \int_{-\tau}^{\tau} K (u) du = 1$, \\
(b)	$\displaystyle \int_{-\tau}^{\tau} u K (u) du =0 $, and \\
(c) $\displaystyle h_{n}^{-1}K\left(h_{n}^{-1}\right) = O(1)$.

\item[(A4)] The sequence of bandwidth $\{ h_n \}$ is such that $h_n = O(n^{-1/3})$.

\item[(A5)] The probability density function of error random variable $e$ is symmetric about $0$ with the following properties: 

\noindent (i) $e_{i}$s are i.i.d.\ random variables.

\noindent (ii) For the location functional $T(F)$, $T(F_{e|X}) = 0$, where $F_{e|X}$ is the conditional distribution of $e$ conditioning on $X$. For instance, in the case of $\alpha$-trimmed mean, $T(F) = \frac{1}{(1 - 2\alpha)}\int x dF(x)$, where $\alpha\in [0, \frac{1}{2})$.

\noindent (iii) $E ({e}^2  |\, X = x) = \sigma^2 (x) < \infty$ for all $x$.

\item[(A6)] There exists a positive $\delta$ such that $E(|e_i|^{2+\delta}) < \infty$ for all $i=1,\ldots ,n$.

\end{itemize}

\begin{theorem}
\label{T1}
Under (A1)-(A6) and for any fixed point $x_{0}$, 
$$\sqrt{nh_n} \left( \hat{g}_{n,\alpha} (x_{0}) - g(x_{0}) - h_n^2 k_2 \left( \frac{g^{\prime \prime} (x_{0})}{2(n-2[n\alpha])} \sum\limits_{i=[n \alpha] +1}^{n - [n \alpha]} f_{X_{(i)}} (x_{0}) + \frac{g^{\prime}(x_{0})}{(n-2[n\alpha])} \sum\limits_{i=[n \alpha] +1}^{n - [n \alpha]} f^{\prime}_{X_{(i)}} (x_{0}) \right) \right)$$ converges weakly to a Gaussian distribution with mean $= 0$ and variance $= V$. Here    
$\displaystyle V = \frac{\sigma^2 (x_{0}) \int \{K(u)\}^2 du}{(1 -2\alpha) t_{\alpha} (x_{0}) }$ ,  $\displaystyle k_2 = \int v^2 K(v) dv$, 
 $\displaystyle t_{\alpha} (x_{0})= \lim_{n\to \infty} \frac{1}{n-2[n\alpha]} \sum\limits_{i=[n \alpha] +1}^{n - [n \alpha]} f_{X_{(i)}} (x_{0})$, $f_{X_{(i)}} (x_{0})$ denotes the probability density function of $X_{(i)}$ ($i$-th order statistic of $X$) at the point $x_{0}$, and $g^{'}$ and $g^{''}$ denote the first and the second derivatives of $g$, respectively.
\end{theorem}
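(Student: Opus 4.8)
The plan is to treat $\hat{g}_{n,\alpha}(x_0) - g(x_0)$ as a ratio and to split its numerator into a deterministic ``bias'' contribution and a stochastic ``error'' contribution. Writing $Y_{[i]} = g(X_{(i)}) + e_{[i]}$ and subtracting $g(x_0)$, I obtain
$$\hat{g}_{n,\alpha}(x_0) - g(x_0) = \frac{\frac{1}{n-2[n\alpha]}\sum_{i} k_n(X_{(i)} - x_0)\bigl(g(X_{(i)}) - g(x_0)\bigr)}{D_n} + \frac{\frac{1}{n-2[n\alpha]}\sum_{i} k_n(X_{(i)} - x_0)\, e_{[i]}}{D_n},$$
where $D_n = \frac{1}{n-2[n\alpha]}\sum_{i} k_n(X_{(i)} - x_0)$ and all sums run over $i = [n\alpha]+1,\dots,n-[n\alpha]$. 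The first term will produce the centering subtracted in the statement, while the second term, after multiplication by $\sqrt{nh_n}$, will be the asymptotically normal piece.

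First I would dispose of the denominator. Using the approximate-identity property of $k_n$ and the substitution $v = (u-x_0)/h_n$, I would show $E\, k_n(X_{(i)} - x_0) \to f_{X_{(i)}}(x_0)$, hence $E D_n = \frac{1}{n-2[n\alpha]}\sum_i f_{X_{(i)}}(x_0) \to t_\alpha(x_0)$, and then a variance bound gives $D_n \xrightarrow{p} t_\alpha(x_0)$; this is where a separate lemma on the limit of $\frac{1}{n-2[n\alpha]}\sum_i f_{X_{(i)}}(x_0)$ is required. For the bias numerator I would Taylor-expand $g(X_{(i)}) - g(x_0) = g'(x_0)(X_{(i)}-x_0) + \tfrac12 g''(x_0)(X_{(i)}-x_0)^2 + o(\cdot)$, and compute the kernel moments $E[k_n(X_{(i)}-x_0)(X_{(i)}-x_0)] = h_n^2 k_2 f'_{X_{(i)}}(x_0) + o(h_n^2)$ and $E[k_n(X_{(i)}-x_0)(X_{(i)}-x_0)^2] = h_n^2 k_2 f_{X_{(i)}}(x_0) + o(h_n^2)$ by expanding $f_{X_{(i)}}(x_0 + h_n v)$ and invoking $\int uK(u)\,du = 0$ and $k_2 = \int v^2 K(v)\,dv$ from (A3). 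Summing over $i$ and normalizing reproduces the $h_n^2 k_2(\cdots)$ centering term of the theorem; assumption (A4), $h_n = O(n^{-1/3})$, then guarantees $\sqrt{nh_n}\,h_n^2 \to 0$, so that the residual bias beyond the subtracted term is negligible after scaling.

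The heart of the proof is the stochastic term $\sqrt{nh_n}\,\frac{1}{(n-2[n\alpha])\,D_n}\sum_i k_n(X_{(i)}-x_0)\,e_{[i]}$, where the classical central limit theorem does not apply because each summand couples the order statistic $X_{(i)}$ with the concomitant error $e_{[i]}$. The key device is to condition on the order statistics $\mathcal{X} = \{X_{(1)},\dots,X_{(n)}\}$: given $\mathcal{X}$ the concomitant errors are \emph{independent} (the standard concomitant property), with conditional mean zero by the symmetry in (A5) and conditional variance $\sigma^2(X_{(i)})$. Conditionally the sum is then a sum of independent, mean-zero, triangular-array terms, so I would apply the Lindeberg--Feller (Lyapunov) central limit theorem, verifying the Lyapunov condition from the $(2+\delta)$-moment bound (A6). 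Its conditional variance, after scaling, is $\frac{nh_n}{(n-2[n\alpha])^2}\sum_i k_n(X_{(i)}-x_0)^2\sigma^2(X_{(i)})$; computing its expectation through $\int\{K(u)\}^2 du$ and the order-statistic densities and dividing by $D_n^2 \to t_\alpha(x_0)^2$ yields the limit $V = \sigma^2(x_0)\int\{K(u)\}^2 du \,/\, \bigl((1-2\alpha)t_\alpha(x_0)\bigr)$. I would then assemble the pieces with Slutsky's theorem.

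The main obstacle I anticipate is this last stage: passing from the conditional to the unconditional central limit theorem. This requires verifying the Lindeberg/Lyapunov condition uniformly enough to hold along almost every realization of $\mathcal{X}$, and proving that the random conditional variance $\frac{nh_n}{(n-2[n\alpha])^2}\sum_i k_n(X_{(i)}-x_0)^2\sigma^2(X_{(i)})$ concentrates at the constant $V\, t_\alpha(x_0)^2$. Both hinge on controlling sums of the order-statistic densities $f_{X_{(i)}}(x_0)$ and their derivatives near $x_0$, and on the interplay between the trimming indices $[n\alpha]$ and the localization induced by the shrinking bandwidth. These order-statistic estimates are the technically delicate ingredients and would be isolated as the supporting lemmas referenced in Appendix A.
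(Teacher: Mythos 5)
Your overall architecture coincides with the paper's: the same ratio decomposition into a bias numerator, a stochastic error numerator, and the denominator $D_n$ (the paper's $\hat{f}_{n,\alpha}$, $\hat{m}_{1,n}$, $\hat{m}_{2,n}$ of Lemmas A.1--A.4), the same Taylor/kernel-moment computation producing the $h_n^2 k_2(\cdots)$ centering with (A4) killing the residual bias, the same consistency lemma $D_n \xrightarrow{p} t_\alpha(x_0)$, and a final Slutsky assembly. Where you genuinely diverge is the CLT for the concomitant error sum $\sum_i k_n(X_{(i)}-x_0)\,e_{[i]}$. You condition on the order statistics and exploit the conditional independence of the concomitants $e_{[i]}$ given $\mathcal{X}$, apply Lindeberg--Feller/Lyapunov conditionally using (A6), and then de-condition by showing the random conditional variance concentrates at the constant limit. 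The paper instead writes the trimmed sum as the full sum minus the two tail sums, $T_{n,1}-T_{n,2}-T_{n,3}$: for $T_{n,1}$ the relabeling identity $\sum_{i=1}^{n} K\bigl((X_{(i)}-x_0)/h_n\bigr)e_{[i]} = \sum_{i=1}^{n} K\bigl((X_i-x_0)/h_n\bigr)e_i$ turns it into a sum of genuinely independent terms, so the unconditional Lyapunov CLT applies directly, while the tail sums are asserted to be asymptotically normal ``by similar arguments.'' The trade-off: your route needs more machinery (almost-sure or in-probability verification of the Lindeberg condition along realizations of $\mathcal{X}$, plus conditional-variance concentration), but it confronts the dependence structure of concomitants head-on; the paper's route is elementary for the full sum, but the tail sums are still concomitant sums where relabeling does not work, and combining three marginally convergent pieces requires joint convergence, both of which the paper leaves implicit. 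If carried out carefully, your plan would be rigorous at exactly the point where the paper's argument is thinnest; conversely, you could shorten your own proof by borrowing the paper's relabeling trick for the untrimmed part and reserving the conditioning argument for the trimmed tails.
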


The assertion in Theorem \ref{T1} indicates that the rate of convergence of the estimator $\hat{g}_{n,\alpha} (x_{0})$ after proper  transformation is $\sqrt{n h_{n}}$, which is same as the rate of convergence of the usual Nadaraya-Watson estimator. In fact, as $\alpha = 0$, the asymptotic variance of  $$\sqrt{nh_n} \left( \hat{g}_{n,\alpha} (x_{0}) - g(x_{0}) - h_n^2 k_2 \left( \frac{g^{\prime \prime} (x_{0})}{2(n-2[n\alpha])} \sum\limits_{i=[n \alpha] +1}^{n - [n \alpha]} f_{X_{(i)}} (x_{0}) + \frac{g^{\prime}(x_{0})}{(n-2[n\alpha])} \sum\limits_{i=[n \alpha] +1}^{n - [n \alpha]} f^{\prime}_{X_{(i)}} (x_{0}) \right) \right),$$ i.e., $V$ coincides with the asymptotic variance of the Nadaraya-Watson estimator after proper transformation. In other words, from this study and the definition of $\hat{g}_{n,\alpha} (x_{0})$, it follows that $\hat{g}_{n,\alpha} (x_{0})$ coincides with the Nadaraya-Watson estimator (i.e., $\hat{g}_{n, NW} (x_{0})$) when $\alpha = 0$. Besides, the assertion in Theorem \ref{T1} further indicates that the performance or the efficiency of $\hat{g}_{n,\alpha} (x_{0})$ depends on the choice of $\alpha$ and $x_{0}$, and it motivates us to study the asymptotic efficiency of $\hat{g}_{n,\alpha} (x_0)$ for various choices of $\alpha$ and $x_{0}$ in Section \ref{AE}.


\subsection{Asymptotic Efficiency of $\hat{g}_{n,\alpha} (x_{0})$}
\label{AE}

As mentioned earlier, it is of interest to see the asymptotic efficiency of $\hat{g}_{n,\alpha} (x_{0})$ for various choices of $x_{0}$ and $\alpha$ relative to $\hat{g}_{n, NW} (x_{0})$, i.e., usual Nadarya-Watson estimator. To explore this issue, this section studies the asymptotic efficiency of the proposed $\hat{g}_{n,\alpha} (x_{0})$ relative to $\hat{g}_{n, NW} (x_{0})$. Note that when $\alpha = 0$, it follows from the assertion of Theorem \ref{T1} that  the asymptotic variance of $\hat{g}_{n, NW} (x_{0})$ is given by:
\begin{equation*}
AV(\hat{g}_{n, NW} (x_0)) = \frac{\sigma^2 (x_0)}{f_X (x_0)} \int \{K(u)\}^2 du,
\end{equation*}
where $\sigma^2 (x) = E(e^2 |X = x)$, $K$ is the kernel function, and $f_X$ is the density function of $X$. Next, the statement of Theorem \ref{T1} provides us the expression of the asymptotic variance of $\hat{g}_{n,\alpha} (x_{0})$, which is the following. 
\begin{equation*}
AV(\hat{g}_{n,\alpha} (x_0)) = \frac{\sigma^2 (x_{0})}{(1 -2\alpha) t_{\alpha} (x_{0})}  \int \{K(u)\}^2 du,
\end{equation*}
where $\alpha\in [0, \frac{1}{2})$ is the trimming proportion and $\displaystyle t_{\alpha} (x)= \lim_{n\rightarrow\infty} \frac{1}{n-2[n\alpha]} \sum\limits_{i=[n \alpha] +1}^{n - [n \alpha]} f_{X_{(i)}} (x)$. Using $AV (\hat{g}_{n, NW} (x_{0}))$ and $AV (\hat{g}_{n,\alpha} (x_{0}))$, one can compute the asymptotic efficiency (denoted by AE) of $\hat{g}_{n,\alpha} (x_{0})$ relative to $\hat{g}_{n, NW} (x_{0})$, which is as follows:
\begin{equation}
AE(\hat{g}_{n,\alpha} (x_{0}), \hat{g}_{n, NW} (x_{0})) = \frac{AV (\hat{g}_{n, NW} (x_{0}))}{AV (\hat{g}_{n,\alpha} (x_0))} = \frac{(1 - 2\alpha) t_{\alpha} (x_{0})}{f_{X}(x_{0})}.
\end{equation}

It should be mentioned that $AE (\hat{g}_{n,\alpha} (x_{0}), \hat{g}_{n, NW} (x_{0}))$ does not depend on the form of kernel function and the nature of the error random variable unlike the location model although asymptotic variances of both $\hat{g}_{n,\alpha} (x_{0})$ and $\hat{g}_{n, NW} (x_{0})$ depend on the choice of the kernel functions. Besides, note that for $\alpha = 0$, $AE (\hat{g}_{n,\alpha} (x_{0}), \hat{g}_{n, NW} (x_{0})) = 1$ as $\hat{g}_{n,\alpha} (x_{0})$ coincides with $\hat{g}_{n, NW} (x_{0})$ for any $x_{0}$, since the sum $\displaystyle \frac{1}{n} \sum\limits_{i=1}^{n} f_{X_{(i)}} (x_0) = f_X (x_0)$ for any $n$ and $x_0$. This fact can be obtained by the formulation of the probability density function of the order statistic and the properties of the binomial coefficients (see {\bf Fact A} for details in Appendix A). In Figures \ref{ae_plot1} and \ref{ae_plot2}, we plot the AE of $\hat{g}_{n,\alpha} (x_{0})$ relative to $g_{n, NW} (x_{0})$ when $x_{0} = 0.5$, and the co-variate $X$ follows uniform distribution over $(0, 1)$ and beta distribution with the scale parameter $= 2$ and the shape parameter $= 2$, respectively. In both cases, it is observed that the AE of $\hat{g}_{n,\alpha} (0.5)$ relative to $\hat{g}_{n, NW} (0.5)$ is close to one for a wide range of $\alpha\in [0, \frac{1}{2})$. Here it should be mentioned that $AE (\hat{g}_{n,\alpha} (x), \hat{g}_{n, NW} (x))\leq 1$ for all $x$ and $\alpha\in [0, \frac{1}{2})$, which follows from {\bf Fact B} (see in Appendix A). Overall, this study establishes that for even a large values of $\alpha$, $\hat{g}_{n,\alpha} (\cdot)$ can attain the almost the same efficiency as that of $\hat{g}_{n, NW} (\cdot)$ but with having much better breakdown point, which follows from the assertion in Theorem \ref{BP}.

\begin{figure}[p]
 	\begin{center}
 	
 	\includegraphics[scale=.6]{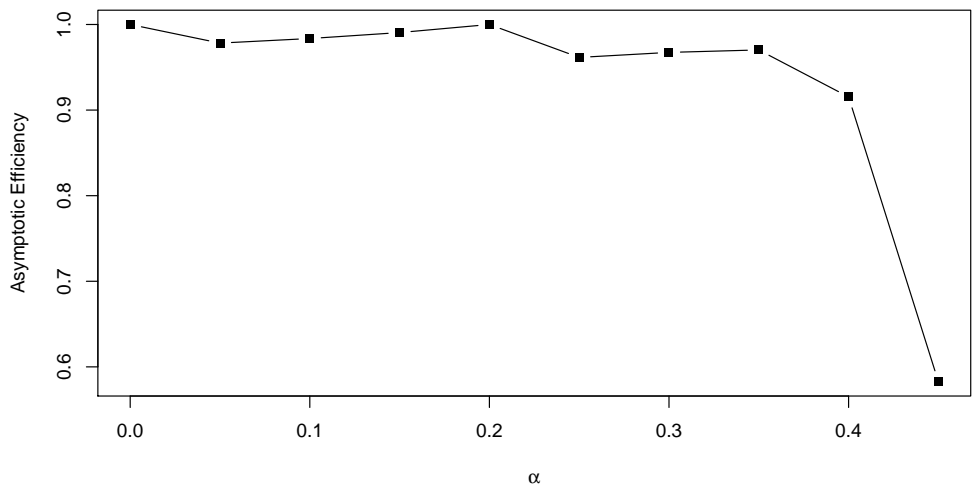}
 
 	\caption{The asymptotic efficiency of $\hat{g}_{n,\alpha} (x_{0})$ relative to $\hat{g}_{n, NW} (x_{0})$ plotted for different values of $\alpha$ when $x_{0} = 0.50$, and $X$ follows uniform distribution over $(0, 1)$. }
 	
 	\label{ae_plot1}
 	\end{center}
\end{figure}

\begin{figure}[p]
 	\begin{center}
 	
 	\includegraphics[scale=.6]{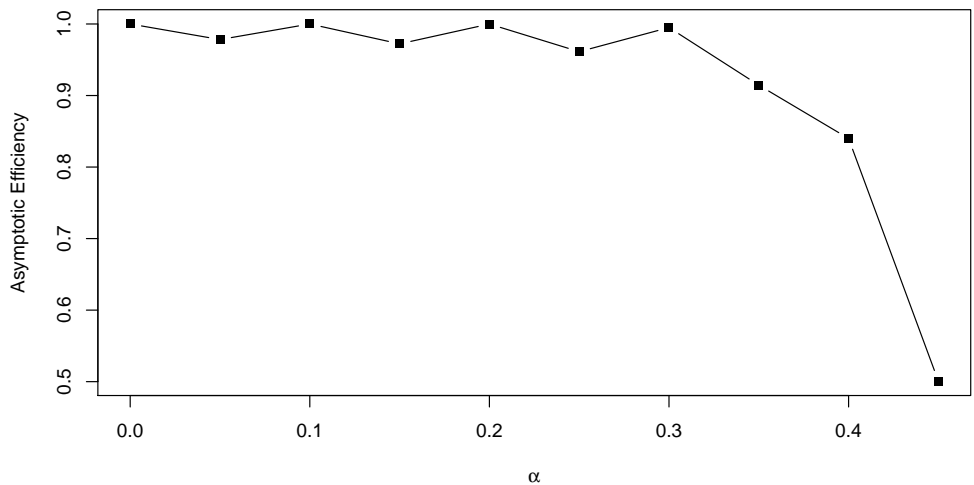}
 
 	\caption{The asymptotic efficiency of $\hat{g}_{n,\alpha} (x_{0})$ relative to $\hat{g}_{n, NW} (x_{0})$ plotted for different values of $\alpha$ when $x_{0} = 0.50$, and $X$ follows beta distribution with the scale parameter $= 2$ and the shape parameter $= 2$.}
 	
 	\label{ae_plot2}
 	\end{center}
\end{figure}


\section{Breakdown Point}
\label{breakdownpoint}
In the earlier section, we established the asymptotic distribution of $\hat{g}_{n,\alpha} (x_{0})$ and studied its asymptotic efficiency for various choices of $\alpha$ and $x_{0}$. Also, it was mentioned that there is a trade-off between the efficiency and the robustness of an estimator. To explore the issue of the robustness of $\hat{g}_{n,\alpha} (x_{0})$, we here study the finite sample breakdown point (see, e.g., \citeA{rousseeuw2005robust}) of $\hat{g}_{n,\alpha} (x_{0})$.

For sake of completeness, we here define the finite sample breakdown point of an estimator $T$. The maximum bias of $T$ with respect to a sample $\mathcal{X} = \{X_1, \ldots, X_n\}$   $(X_{i} \in \mathbb{R}^{d}, d\geqslant 1, i = 1, \ldots, n)$ is defined as $\displaystyle b \left(m, T ,\mathcal{X} \right) = \sup_{\mathcal{X}^{\prime}} |T(\mathcal{X}^{\prime}) - T(\mathcal{X})|$, where $\mathcal{X}^{\prime}$ is the corrupted sample obtained by replacing $m$ sample points from $\mathcal{X}$. Finally, the breakdown point of $T$ is defined as 
\begin{equation*}
\epsilon^{\star}_n \left( T , \mathcal{X} \right) = \inf_{m} \left\{ \frac{m}{n} \, \Big| \, b \left(m, T ,\mathcal{X} \right) \text{ is unbounded} \right\}.
\end{equation*}

\noindent To compute the breakdown point of $\hat{g}_{n,\alpha} (x_{0})$, one needs to assume the following conditions. 

\noindent (B1) The kernel $K(\cdot)$ is a bounded probability density function with support $(-\tau,\tau)$, where $\tau > 0$.

\noindent (B2) $\displaystyle \int_{-\tau}^{\tau} K (u) du = 1$.

\noindent (B3) $\displaystyle h_{n}^{-1}K\left(h_{n}^{-1}\right) = O(1)$.


\noindent The following theorem describes the breakdown point of $\hat{g}_{n,\alpha} (\cdot)$. 

\begin{theorem}
\label{BP}
Under the assumptions (B1), (B2) and (B3), for any $\alpha\in [0, \frac{1}{2})$, the finite sample breakdown point of $\hat{g}_{n,\alpha} (\cdot)$ is $\epsilon^{\star}_n \left( \hat{g}_{n,\alpha} (x_0) , \mathcal{X} \right)$ = $\displaystyle \frac{[n\alpha]}{n}$, and consequently, the asymptotic breakdown point is $\epsilon^{\star} \left( \hat{g}_{n,\alpha} (x_0) , \mathcal{X} \right) = \displaystyle\lim_{n\rightarrow\infty}\epsilon^{\star}_n \left( \hat{g}_{n,\alpha} (x_0), \mathcal{X} \right) = \alpha$.
\end{theorem}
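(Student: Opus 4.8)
The plan is to prove the finite-sample assertion by a two-sided argument and then pass to the limit. I would first exhibit a contamination of size $[n\alpha]$ under which $\hat{g}_{n,\alpha}(x_0)$ can be driven outside every bounded set, which gives $\epsilon^\star_n \le [n\alpha]/n$; and then show that replacing any $m \le [n\alpha]-1$ of the sample pairs leaves $|\hat{g}_{n,\alpha}(x_0)|$ bounded, which gives the reverse inequality $\epsilon^\star_n \ge [n\alpha]/n$. Combining the two yields $\epsilon^\star_n(\hat{g}_{n,\alpha}(x_0),\mathcal{X}) = [n\alpha]/n$, and the asymptotic breakdown point $\alpha$ then follows immediately by letting $n \to \infty$ in $[n\alpha]/n$.

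For the first (upper-bound) direction I would work from the closed form \eqref{trimnwe}, in which $\hat{g}_{n,\alpha}(x_0)$ is a kernel-weighted average of the concomitants $Y_{[i]}$ taken over the central covariate order statistics $X_{([n\alpha]+1)},\dots,X_{(n-[n\alpha])}$. The strategy is to replace $[n\alpha]$ of the original observations by contaminated pairs whose covariate coordinates are placed strictly inside the window $(x_0-\tau h_n,\,x_0+\tau h_n)$, so that by (B1)--(B2) their kernel weights $k_n(X_{(i)}-x_0)$ are strictly positive, while their response coordinates are sent to $+\infty$. Since $x_0$ is a fixed interior point, such contaminated observations are assigned central covariate ranks and hence survive the symmetric trimming; their positive weights then force the numerator of \eqref{trimnwe} to diverge while, by (B3), the denominator stays bounded away from $0$ and from $\infty$. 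Therefore $b([n\alpha],\hat{g}_{n,\alpha},\mathcal{X})$ is unbounded.

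For the reverse (lower-bound) direction I would fix $m \le [n\alpha]-1$ together with an arbitrary corrupted sample $\mathcal{X}'$, and try to bound $|\hat{g}_{n,\alpha}(x_0)|$ uniformly. The mechanism I would lean on is that the trimming discards the $[n\alpha]$ smallest and the $[n\alpha]$ largest ordered covariates: a contaminated pair pushed to an extreme covariate value receives an extreme covariate rank and is removed from \eqref{trimnwe}, whereas a retained contaminated pair must sit among the central ranks and so contributes only through its bounded kernel weight. A rank count shows that at most $m < [n\alpha]$ contaminated pairs can occupy central positions after re-ordering, so at least $n-2[n\alpha]-m>0$ of the retained concomitants are original, bounded responses; one would then try to dominate $|\hat{g}_{n,\alpha}(x_0)|$ by the largest retained $|Y_{[i]}|$ times a ratio of kernel weights controlled by (B1)--(B3), concluding that $b(m,\hat{g}_{n,\alpha},\mathcal{X})$ is finite for every $m\le[n\alpha]-1$.

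The step I expect to be the main obstacle is precisely this lower bound, and specifically the decoupling between the variable on which the trimming acts---the ordered covariate $X_{(i)}$---and the variable whose magnitude drives the estimator---the concomitant response $Y_{[i]}$. A contaminated pair with a moderate covariate, which evades the covariate-based trimming, may still carry an arbitrarily large response into the central sum with strictly positive kernel weight, and the naive rank count above does not by itself bound such a contribution. Making the lower bound rigorous therefore requires a careful argument that, with fewer than $[n\alpha]$ replacements, the retained contaminated responses cannot dominate the weighted average; this is exactly where the kernel hypotheses (B1)--(B3)---boundedness and unit mass of $K$ on $(-\tau,\tau)$, together with the rate condition $h_{n}^{-1}K(h_{n}^{-1})=O(1)$---must be brought to bear to keep the denominator uniformly positive and to quantify how many retained terms remain uncontaminated. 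I would expect this interplay, rather than the explicit construction in the upper bound, to be the technical heart of the proof.
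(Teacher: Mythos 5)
Your proposal contains a genuine gap, and it is exactly the one you flag in your final paragraph --- but it is not a technical obstacle to be overcome by care with (B1)--(B3); it is fatal to the plan as stated. The contamination you use for the upper bound (covariate placed inside $(x_0-\tau h_n,\,x_0+\tau h_n)$, hence a central rank and a strictly positive weight $k_n(X_i'-x_0)$, with response sent to $+\infty$) works verbatim with a \emph{single} corrupted pair: one retained summand $k_n(X_i'-x_0)Y_i'$ already drives the numerator of \eqref{trimnwe} to infinity while the denominator changes only boundedly. So under the definition you are using (supremum over \emph{all} corrupted samples $\mathcal{X}'$), $b(m,\hat{g}_{n,\alpha},\mathcal{X})$ is unbounded for every $m\geq 1$; this simultaneously proves your upper bound and falsifies the lower bound you need, namely that $b(m,\hat{g}_{n,\alpha},\mathcal{X})$ stays bounded for $m\leq [n\alpha]-1$. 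The two halves of your plan are mutually inconsistent, and no rank-counting or kernel estimate can repair the second half: trimming on covariate ranks gives no control whatsoever over a corrupted pair whose covariate is moderate and whose response is huge, which is precisely the "decoupling" you identified.

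The paper's proof takes an essentially opposite route, resting on a restricted notion of corruption: the $m$ replaced pairs are taken to be "arbitrarily large values," so their covariates occupy the extreme ranks $j>n-[n\alpha]$ and are deleted by the trimming. Under that reading the paper concludes $b([n\alpha],\hat{g}_{n,\alpha},\mathcal{X})=0$ --- the exact negation of your first step --- which gives $\epsilon^{\star}_n\geq [n\alpha]/n$, and unboundedness is then asserted at $m=[n\alpha]+1$ because at least one corrupted pair must then receive a retained rank. Your breakdown construction is precisely the kind of corruption this model excludes. Consequently your proposal is not an alternative proof of the theorem: under the standard Donoho--Huber definition you adopted, the theorem's conclusion itself fails (the breakdown point is $1/n$, tending to $0$), while under the paper's implicit contamination model your upper-bound step is inadmissible. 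The tension you correctly sensed is real, but resolving it requires changing the contamination model (or trimming on responses/residuals rather than on covariate ranks), not a more careful argument for the lower bound.
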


\begin{proof}
Let $\mathcal{X} = \{ (X_1,Y_1), \ldots ,(X_n,Y_n) \}$, then $\hat{g}_{n,\alpha} (x_0)$ based on $\mathcal{X}$ as follows:
\begin{equation*}
\hat{g}_{n,\alpha} (x_0) = \frac{\sum\limits_{i = [n\alpha] +1}^{n -[n\alpha] } k_n \left( X_{(i)} -x_0\right) Y_{[i]} }{ \sum\limits_{i = [n\alpha] +1}^{n -[n\alpha]} k_n \left(X_{(i)} -x_0\right)} = \frac{(n-2[n\alpha])^{-1} \sum\limits_{i = [n\alpha] +1}^{n -[n\alpha] } k_n \left( X_{(i)} -x_0\right) Y_{[i]} }{ (n-2[n\alpha])^{-1} \sum\limits_{i = [n\alpha] +1}^{n -[n\alpha]} k_n \left(X_{(i)} -x_0\right)}.
\end{equation*}
The minimum bias of $\hat{g}_{n,\alpha} (x_0)$ is then expressed as $\displaystyle b \left(m, \hat{g}_{n,\alpha} (x_0) ,\mathcal{X} \right) = \sup_{\mathcal{X}^{\prime}} \left\lVert \hat{g}_{n,\alpha} (x_0)(\mathcal{X}^{\prime}) - \hat{g}_{n,\alpha} (x_0)(\mathcal{X}) \right\rVert $.

Let us first consider $m = [n\alpha]$, and after replacing $m$ pairs of $(X_i,Y_i)$ with arbitrarily large values $(X_i^{\prime},Y_i^{\prime})$, the estimator remains unchanged. The reason is as follows: for all contaminated pairs $(X_i^{\prime},Y_i^{\prime})$, we have $(X_i^{\prime},Y_i^{\prime}) = (X_{(j)},Y_{[j]})$ for some $j > n - [n\alpha]$. This fact implies that $\hat{g}_{n,\alpha} (x)(\mathcal{X}^{\prime}) = \hat{g}_{n,\alpha} (x_0)(\mathcal{X})$ when $[n\alpha]$ number of observations are contaminated. Hence, $\displaystyle b \left([n\alpha], \hat{g}_{n,\alpha} (x_0) ,\mathcal{X} \right) = 0$, and consequently, 
\begin{equation}\label{ub}
\epsilon^{\star}_n \left( \hat{g}_{n,\alpha} (x_0) , \mathcal{X} \right) \geqslant \frac{[n\alpha]}{n}.
\end{equation}
For the reverse inequality, suppose that $m = [n\alpha] +1$ many observations (denoted as $(X_i^{\prime},Y_i^{\prime})$) are corrupted. Note that under this circumstance, for at least one value of $i \in \{1,\ldots ,n\}$, $(X_i^{\prime}, Y_i^{\prime}) = (X_{(j)},Y_{[j]})$ for some $j \leqslant n - [n\alpha]$. Now, the denominator of $\hat{g}_{n,\alpha} (x_0)$ is bounded, i.e.,  $\displaystyle (n-2[n\alpha])^{-1} \sum\limits_{i = [n\alpha] +1}^{n -[n\alpha]} k_n \left(X_{(i)} -x_0\right) < \infty$ for all $x_0$ (using (B3)), and the numerator of $\hat{g}_{n,\alpha} (x_0)(\mathcal{X}^{\prime})$ is unbounded, since the sum $\displaystyle (n-2[n\alpha])^{-1} \sum\limits_{i = [n\alpha] +1}^{n -[n\alpha]} k_n \left(X_{(i)} -x_0\right) Y_{[i]} $ has at least one contaminated $Y_{[i]}$, which makes $\hat{g}_{n,\alpha} (x_0)$ unbounded. Therefore, $b \left([n\alpha] +1, \hat{g}_{n,\alpha} (x_0) ,\mathcal{X} \right)$ becomes unbounded, and hence, 
\begin{equation}\label{lb}
\epsilon^{\star}_n \left( \hat{g}_{n,\alpha} (x_0) , \mathcal{X} \right) < \frac{[n\alpha] +1}{n}.
\end{equation} 

\noindent Combining (\ref{ub}) and (\ref{lb}), we have $\displaystyle\epsilon^{\star}_n \left( \hat{g}_{n,\alpha} (x_0) , \mathcal{X} \right) = \frac{[n\alpha]}{n}$. Consequently, the asymptotic breakdown point is $\epsilon^{\star} \left( \hat{g}_{n,\alpha} (x_0) , \mathcal{X} \right) = \displaystyle\lim_{n\rightarrow\infty}\epsilon^{\star}_n \left( \hat{g}_{n,\alpha} (x_0), \mathcal{X} \right) = \alpha$.
\end{proof}

The assertion in Theorem \ref{BP} indicates that the asymptotic breakdown point of $\hat{g}_{n,\alpha} (x_{0})$ is $\alpha$ for any $x_{0}$, which further implies that it attains the highest asymptotic breakdown point $\frac{1}{2}$ when the trimming proportion $\alpha\rightarrow \frac{1}{2}$. On the other hand, when $\alpha = 0$, the asymptotic breakdown point will be the  lowest possible value zero, which is a formal reason why usual Nadarya-Watson estimator, i.e., $\hat{g}_{n, NW} (.)$ is a non-robust estimator. Overall, the fact is that the robustness of the estimator $\hat{g}_{n,\alpha} (\cdot)$ will increase as the trimming proportion increases unlike the case of efficiency study. In fact, as it is mentioned earlier, this is the reason why the choice of trimming proportion in $\hat{g}_{n,\alpha}$ is an issue of concern as $\alpha$ controls the both the efficiency and the robustness properties of the estimator. Moreover, since the efficiency of the estimator depends on the sample size $n$ as well, we study the finite sample efficiency of the estimator in the next section.


\section{Finite Sample Study}
\label{FSS}
In Section \ref{AE}, we studied the efficiency of $\hat{g}_{n,\alpha} (x_{0})$ for various choices of $\alpha$ and $x_{0}$ when the sample size tends to infinity, and it showed various features in the performance of $\hat{g}_{n,\alpha} (x_{0})$ in terms of the asymptotic efficiency. We are now interested to see the performance of $\hat{g}_{n,\alpha} (x_0)$ for different values of $\alpha$ when the sample size is finite.  In the numerical study, we consider $\alpha = 0.05, 0.10, \ldots ,0.45$ and  $n = 50$ and $500$. Here also, Epanechnikov Kernel (see, e.g., \citeA{silverman1986density}) is used with $h_{n} = \frac{n^{-\frac{1}{2}}}{2}$, and the co-variates are generated from uniform distribution over $(0,1)$, unless mentioned otherwise. We study both linear and non linear model coupled with standard normal and $t$-distribution with 5 degrees of freedom as the distribution of the error random variables. All results are summarized in Figures \ref{fss_plot1}, \ref{fss_plot2}, \ref{fss_plot3} and \ref{fss_plot4}, and in the tabular form in Appendix B.

We compute the finite sample efficiency as follows: Using the form of the model and the distribution of the error random variable, we generate $(x_{1, j}, y_{1, j}), \ldots, (x_{n, j}, y_{n, j})$, when $j = 1, \ldots, N$. Afterwards, for each $j = 1, \ldots, N$, we compute $\hat{g}_{n, \alpha} (x_{0})$ for different values of $\alpha$ and $\hat{g}_{n, NW} (x_{0})$ as well. Let the values of $\hat{g}_{n, \alpha} (x_{0})$ and $\hat{g}_{n, NW} (x_{0})$ be $(U_{1}, \ldots, U_{N})$ and $(V_{1}, \ldots, V_{N})$, respectively, and finally, the finite sample efficiency of $\hat{g}_{n, \alpha} (x_{0})$ relative to $\hat{g}_{n, NW} (x_{0})$ is defined as $\frac{\frac{1}{N}\sum\limits_{i = 1}^{N} \left(V_{i} - \frac{1}{N}\sum\limits_{i = 1}^{N} V_{i} \right)^{2} }{\frac{1}{N} \sum\limits_{i = 1}^{N} \left(U_{i} - \frac{1}{N}\sum\limits_{i = 1}^{N}U_{i}\right)^{2}}$. In the numerical study, we consider $N = 1000$ and $x_{0} = 0.50$. In Figures \ref{fss_plot1}, \ref{fss_plot2}, \ref{fss_plot3} and \ref{fss_plot4}, we plot the finite sample efficiency of $\hat{g}_{n,\alpha}$ relative to $\hat{g}_{n,NW}$ for different values of $\alpha$, for four different underlying model.

\vspace{0.25in}

\noindent {\bf Example 1:} Model: $Y = 5X+e$, where $e$ follows standard normal distribution.

\begin{figure}[H]
	\begin{center}
	
	\includegraphics[scale=.6]{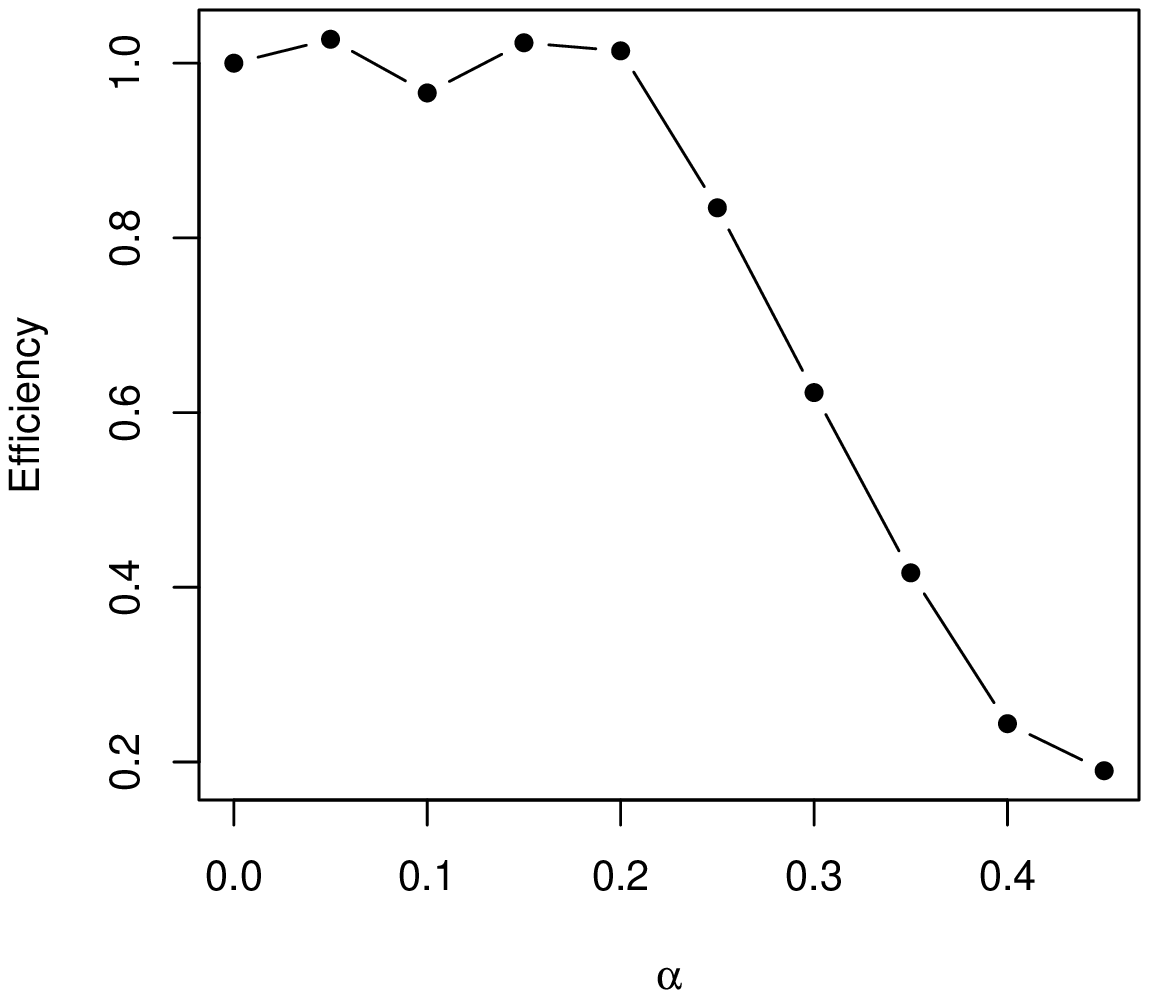}
	\includegraphics[scale=.6]{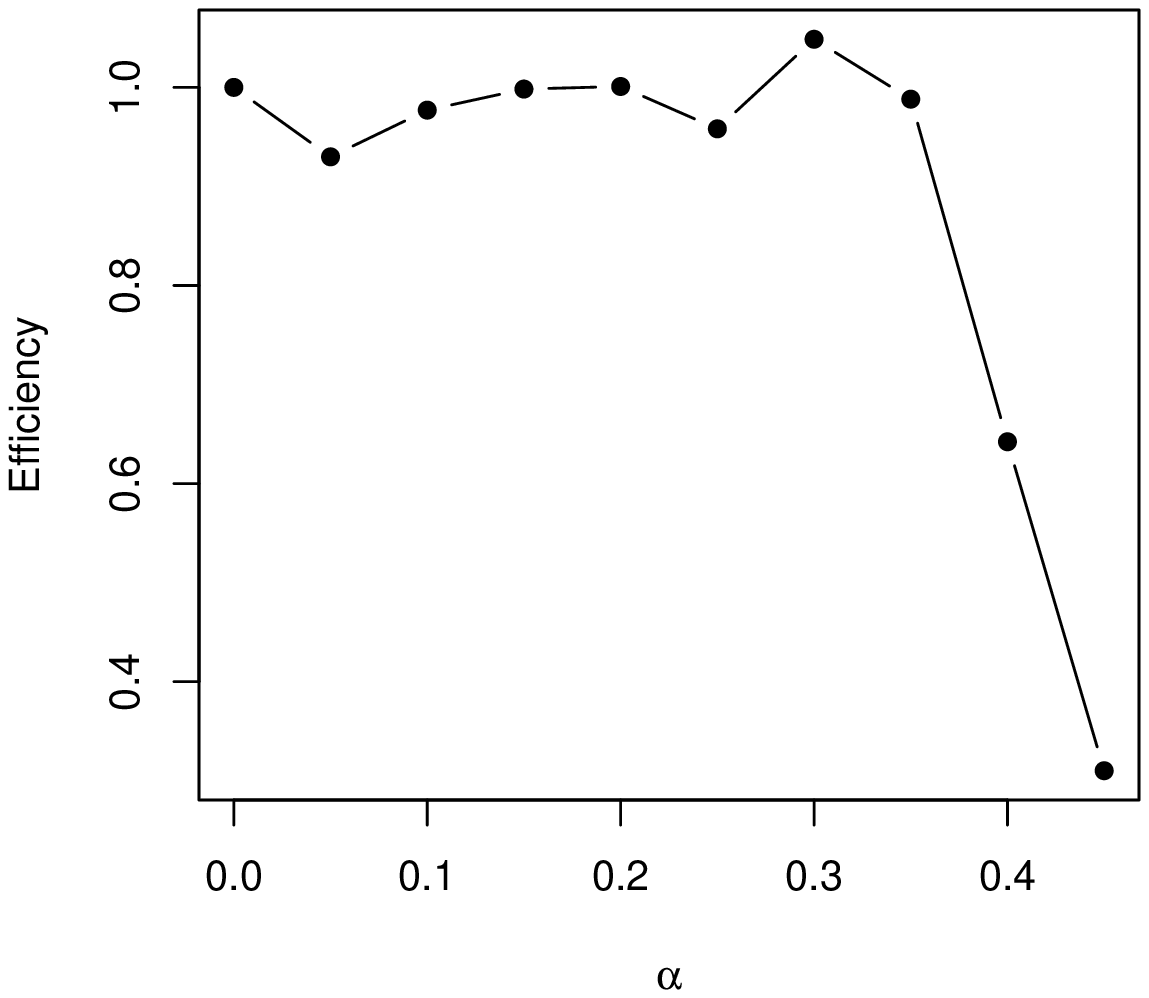}

	\caption{For different values of $\alpha$, the finite sample efficiency of $\hat{g}_{n,\alpha}(0.5)$ relative to $\hat{g}_{n,NW}(0.5)$ for Example 1. The left diagram is for $n = 50$, and the right diagram is for $n = 500$.}
	\label{fss_plot1}
	\end{center}
\end{figure}

The diagrams in Figures \ref{fss_plot1}, \ref{fss_plot2}, \ref{fss_plot3} and \ref{fss_plot4} indicate that $\hat{g}_{n, \alpha} (x_{0})$ has good efficiency relative to $\hat{g}_{n, NW} (x_{0})$ for a wide range of $\alpha$ regardless of the choice of the model and/or the choice of the distribution of the error random variable.

\vspace{0.25in}

\noindent {\bf Example 2:} Model: $Y = 4X^3 +e$, where $e$ follows standard normal distribution.

\begin{figure}[H]
	\begin{center}

	\includegraphics[scale=.6]{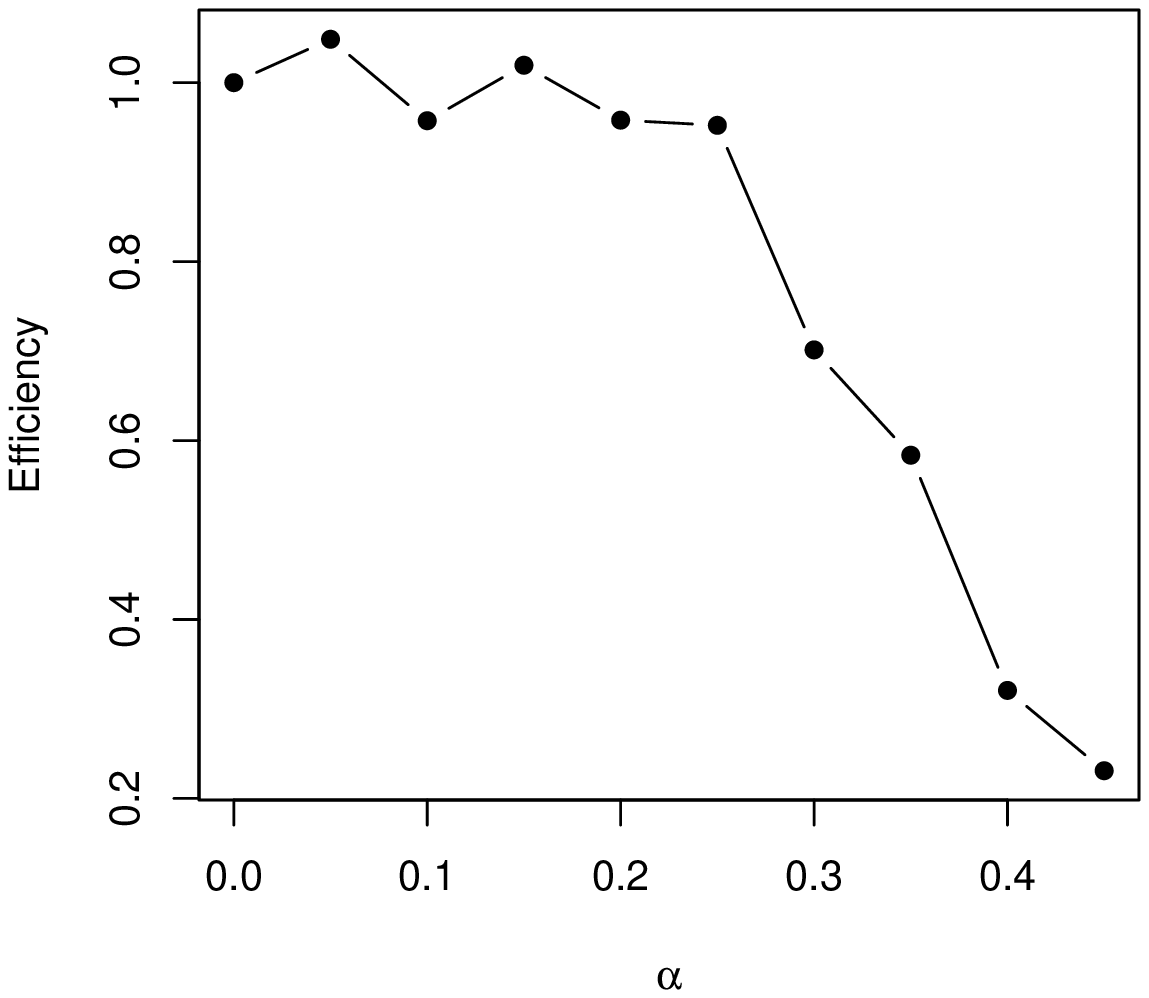}
	\includegraphics[scale=.6]{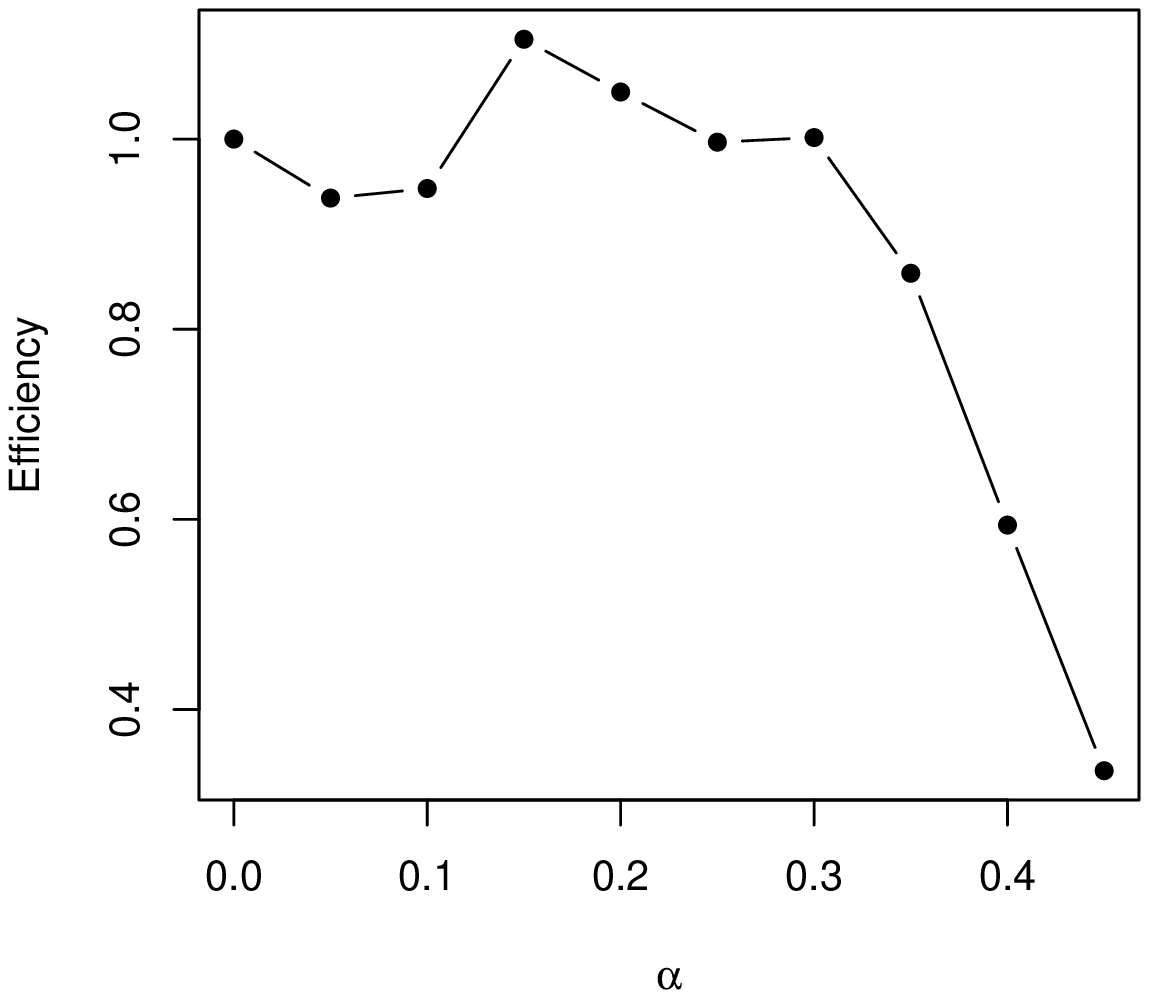}

	\caption{For different values of $\alpha$, the finite sample efficiency of $\hat{g}_{n,\alpha}(0.5)$ relative to $\hat{g}_{n,NW}(0.5)$ for Example 2. The left diagram is for $n = 50$, and the right diagram is for $n = 500$.}
	\label{fss_plot2}
	\end{center}
\end{figure}

\noindent {\bf Example 3:} Model: $Y = 5X +e$, where $e$ follows $t$-distribution with $5$ degrees of freedom.

\begin{figure}[H]
	\begin{center}

	\includegraphics[scale=.6]{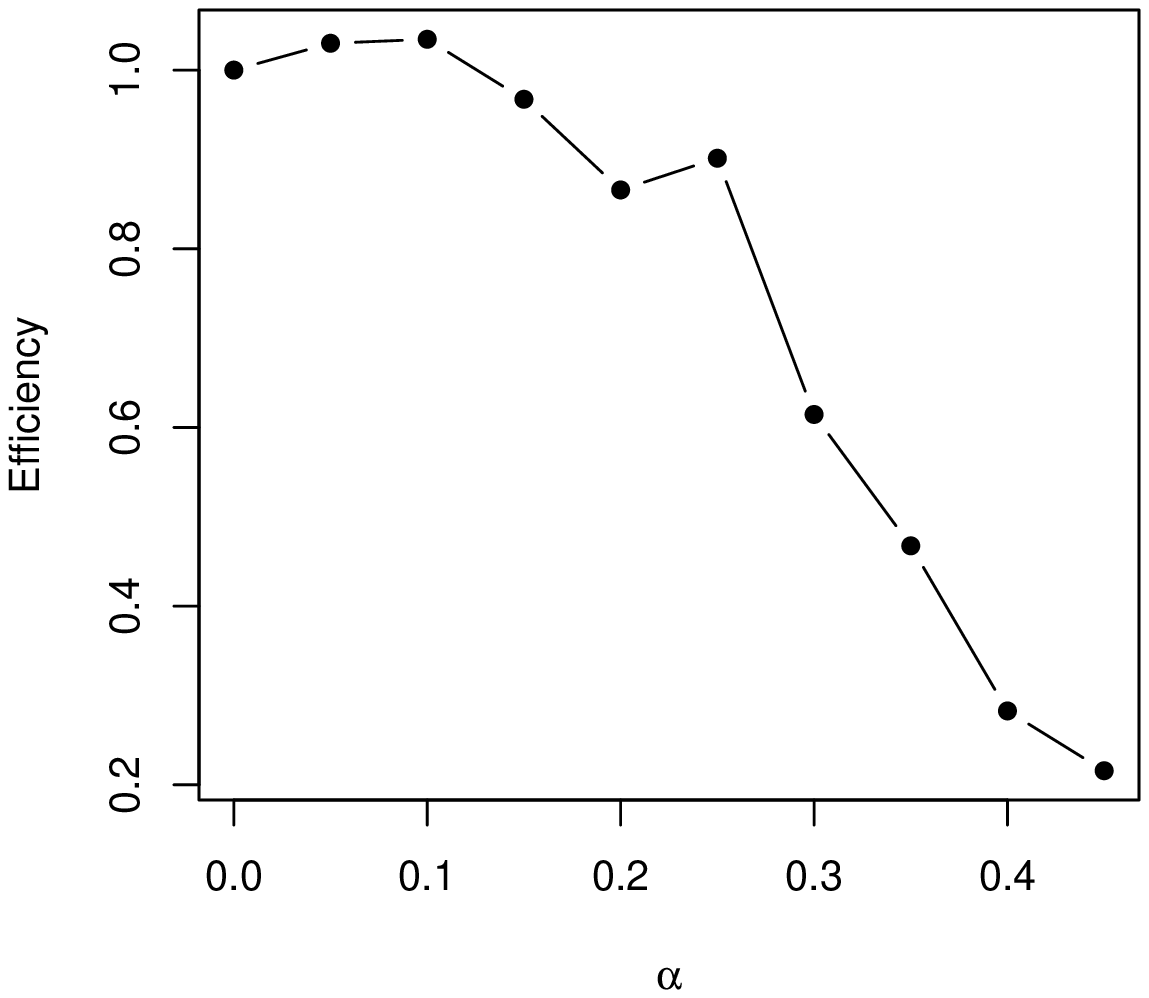}
	\includegraphics[scale=.6]{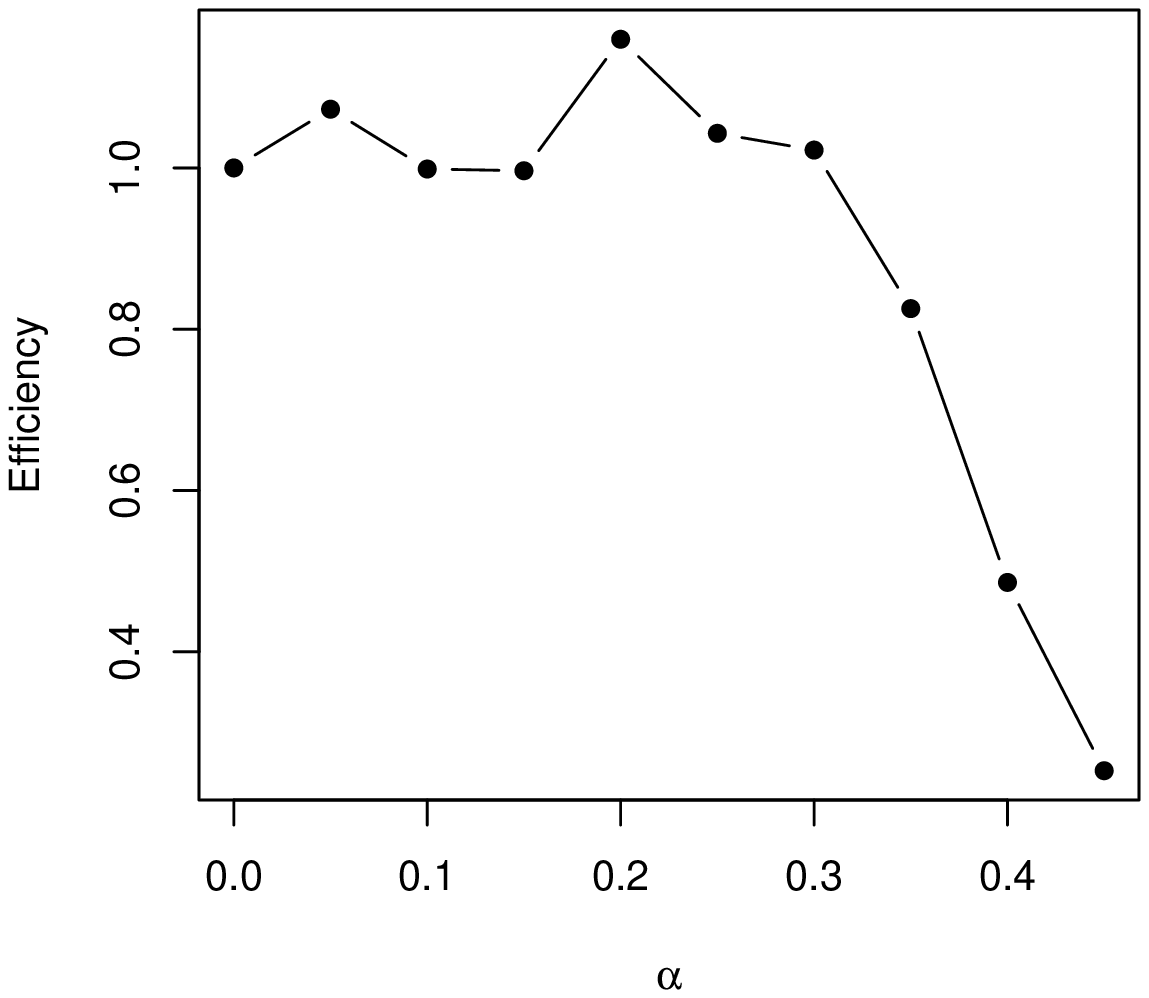}

	\caption{For different values of $\alpha$, the finite sample efficiency of $\hat{g}_{n,\alpha}(0.5)$ relative to $\hat{g}_{n,NW}(0.5)$ for Example 3. The left diagram is for $n = 50$, and the right diagram is for $n = 500$.}
	\label{fss_plot3}
	\end{center}
\end{figure}

\noindent {\bf Example 4:} Model: $Y = 4X^3 +e$, where $e$ follows $t$-distribution with $5$ degrees of freedom.

\begin{figure}[H]
	\begin{center}

	\includegraphics[scale=.6]{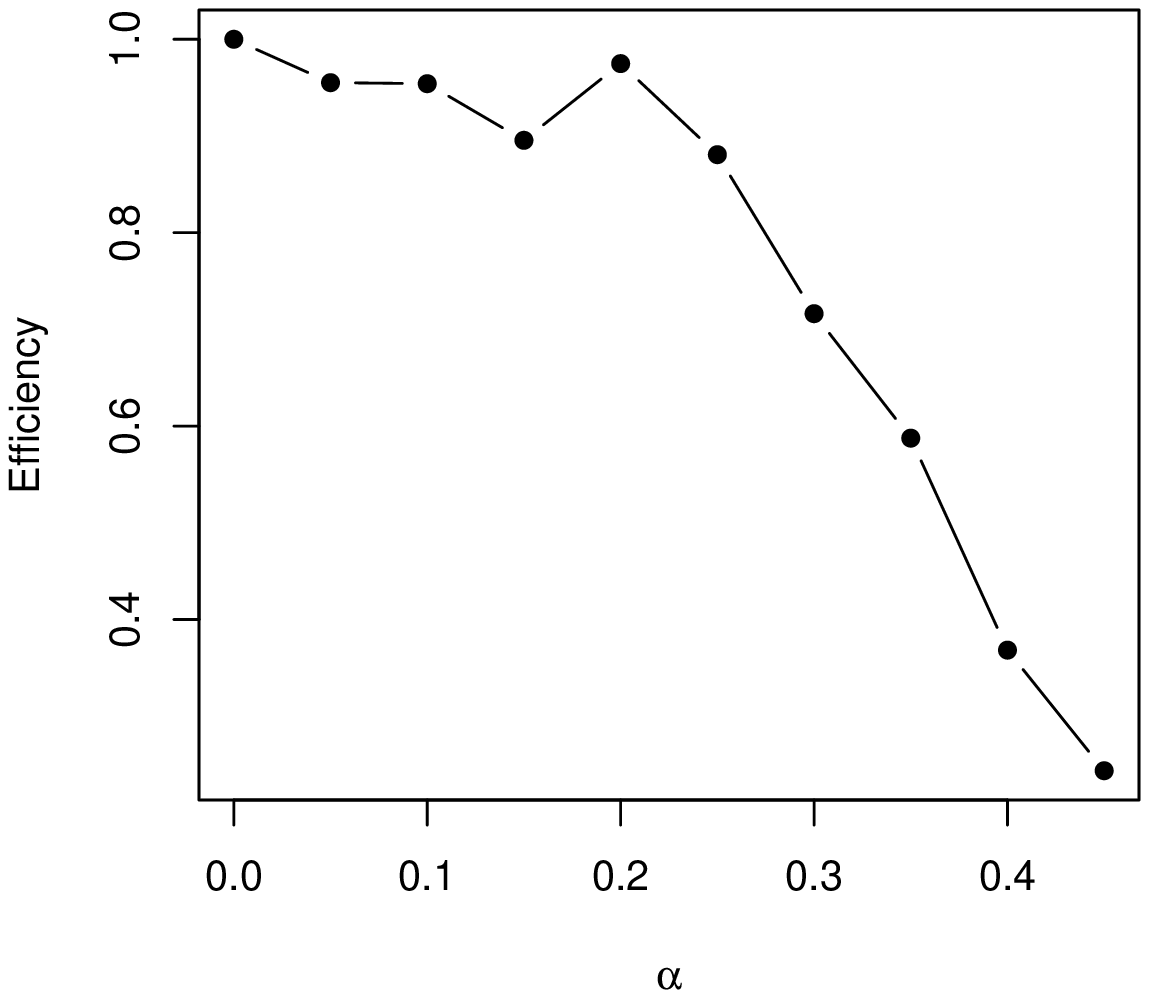}
	\includegraphics[scale=.6]{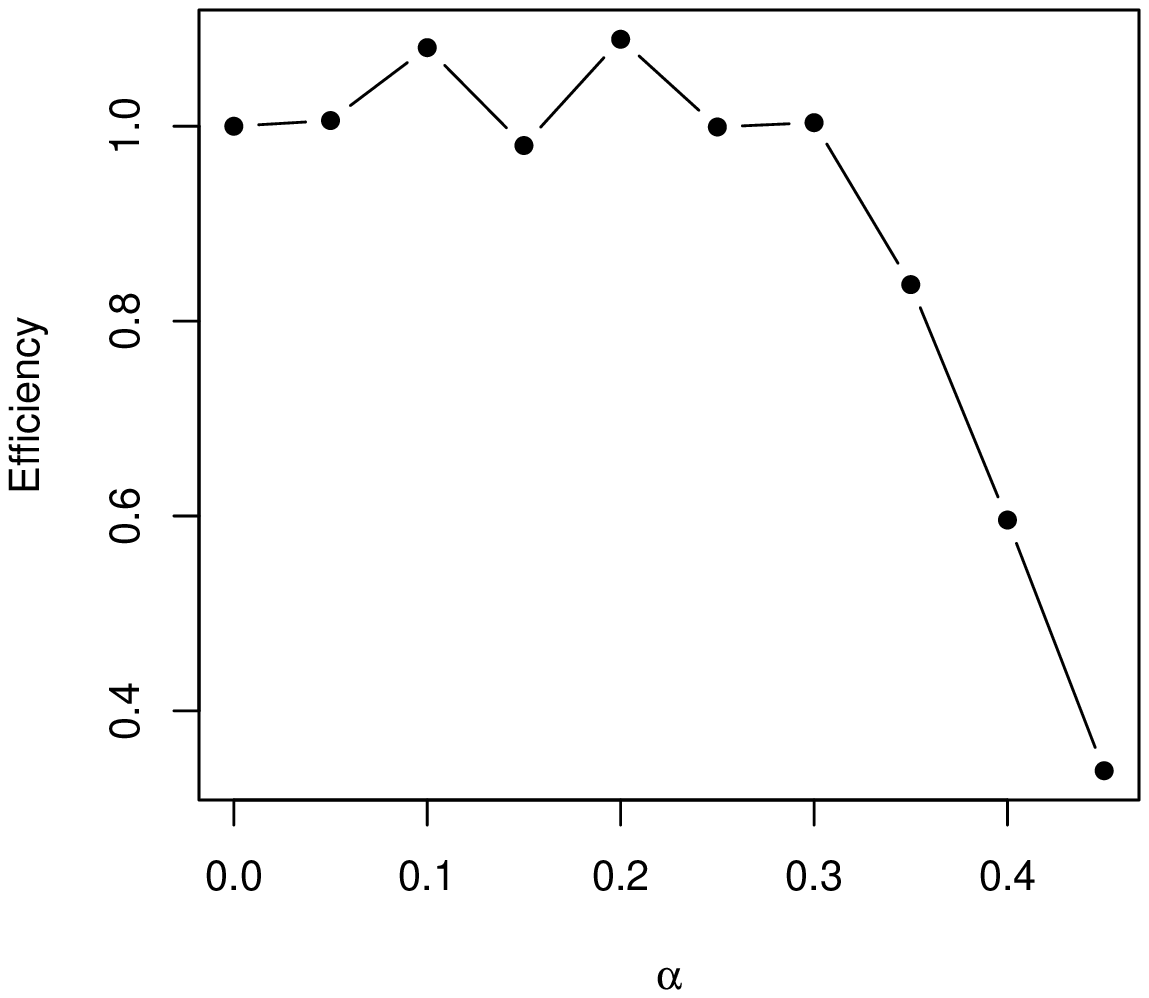}

	\caption{For different values of $\alpha$, the finite sample efficiency of $\hat{g}_{n,\alpha}(0.5)$ relative to $\hat{g}_{n,NW}(0.5)$ for Example 4. The left diagram is for $n = 50$, and the right diagram is for $n = 500$.}
	\label{fss_plot4}
	\end{center}
\end{figure}


\section{Real Data Analysis}
\label{rda}

In this section, we illustrate the functionality of our proposed estimator on some benchmark real data sets. All these data sets are available in UCI machine repository. 

{\bf Combined Cycle Power Plant Data Set:}  This data set contains 9568 data points collected from a Combined Cycle Power Plant over six years (2006-2011), when the plant was set to work with full load (see \citeA{kaya2012local} and \citeA{tufekci2014prediction} for details). The data set can be accessed with the following link:  \url{https://archive.ics.uci.edu/ml/datasets/Combined+Cycle+Power+Plant}. The data contains five attributes: Temperature (in $^{\circ}$C), Ambient Pressure (in milibar), Relative Humidity (in \%), Exhaust Vacuum (in cm Hg) and Electrical Energy Output (in MW). We consider Temperature as our co-variate or independent variable ($X$) and Electrical Energy output as response or dependent variable ($Y$). We provide a scatter plot of the Electrical Energy Output against Temperature in the first diagram of Figure \ref{realdata1}.

In the study, we first scaled the data associated with the co-variate to the interval $[0, 1]$ using the transformation $x^{*} = \frac{x - \min (x)}{\max(x) - \min(x)}$, where $x^{*}$ is the transformed variable, and we adopt Bootstrap methodology to compute the efficiency, which is called as Bootstrap efficiency. The procedure : We first generate $B$ many Bootstrap resamples with size $n$ from the data $(y_{1}, x_{1}^{*}), \ldots, (y_{n}, x_{n}^{*})$, and compute the values of $\hat{g}_{n,\alpha} (x_{0})$ and $\hat{g}_{n, NW} (x_{0})$ for each resample. Let us denote those values of $\hat{g}_{n,\alpha} (x_{0})$ and $\hat{g}_{n, NW} (x_{0})$ as $(S_{1}, \ldots, S_{B})$ and $(R_{1}, \ldots, R_{B})$, respectively. Then the Bootstrap efficiency of $\hat{g}_{n,\alpha} (x_{0})$ relative to $\hat{g}_{n, NW} (x_{0})$ is defined as $\frac{\frac{1}{B}\sum\limits_{i = 1}^{B} \left(R_{i} - \frac{1}{B}\sum\limits_{i = 1}^{B}R_{i}\right)^{2}}{\frac{1}{B}\sum\limits_{i = 1}^{B} \left(S_{i} - \frac{1}{B}\sum\limits_{i = 1}^{B}S_{i}\right)^{2}}.$ In the second diagram of Figure \ref{realdata1}, we plot the Bootstrap efficiency of $\hat{g}_{n,\alpha} (x_{0})$ relative to $\hat{g}_{n, NW} (x_{0})$ for different values of $\alpha$ for this data (here $n = 9568$). In this study, we consider Epanechnikov kernel with bandwidth $h_{n} = \frac{n^{-\frac{1}{2}}}{2}$ and $B = 1000$. The second diagram of Figure \ref{realdata1} indicates that the efficiency of $\hat{g}_{n,\alpha}$ relative to $\hat{g}_{n, NW}$ is substantially high for a wide range of $\alpha$, and the most probable reason is that the data has a few influential observations.

\begin{figure}
	\begin{center}
	
	\includegraphics[scale=.7]{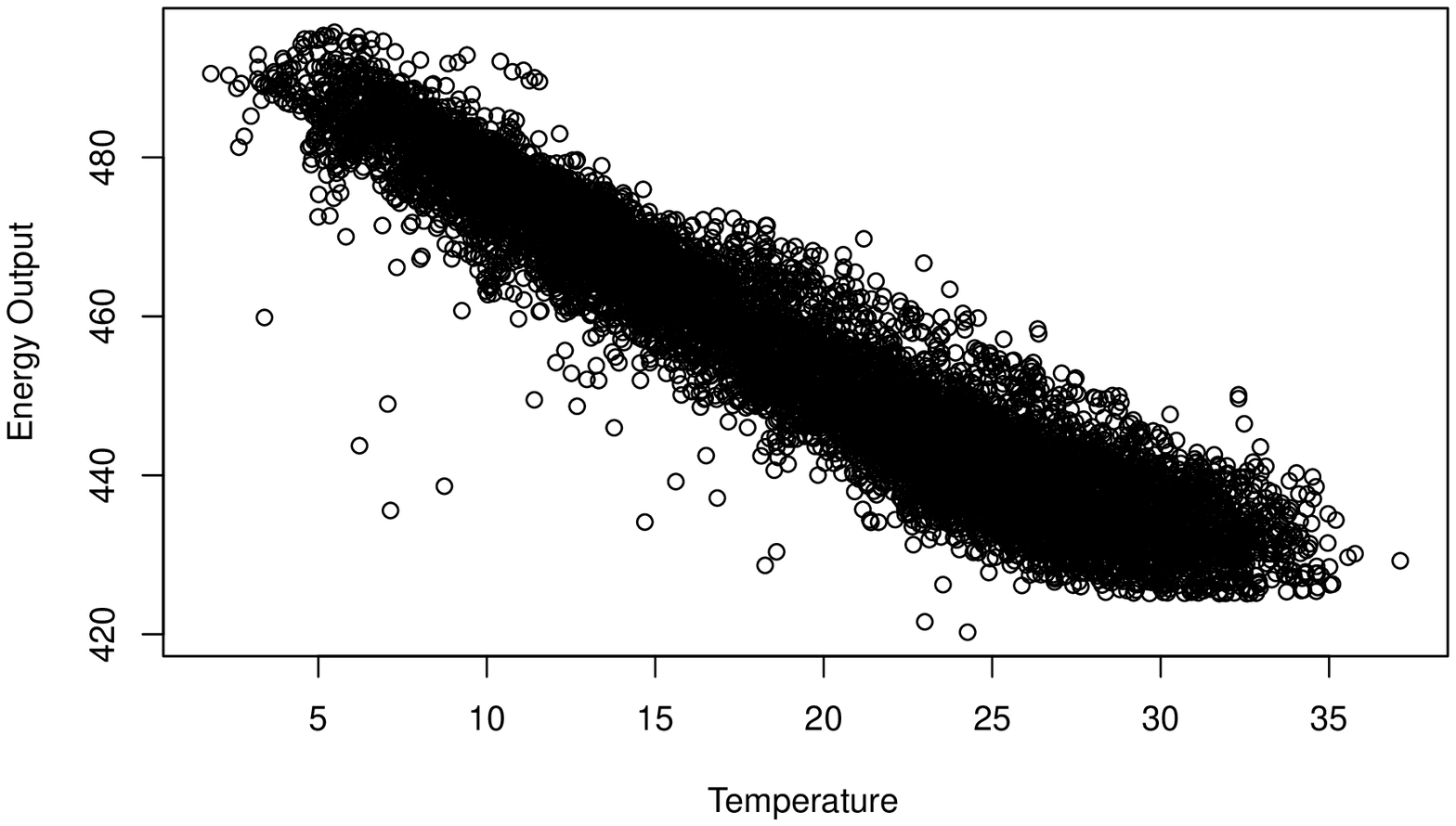}
	\includegraphics[scale=.7]{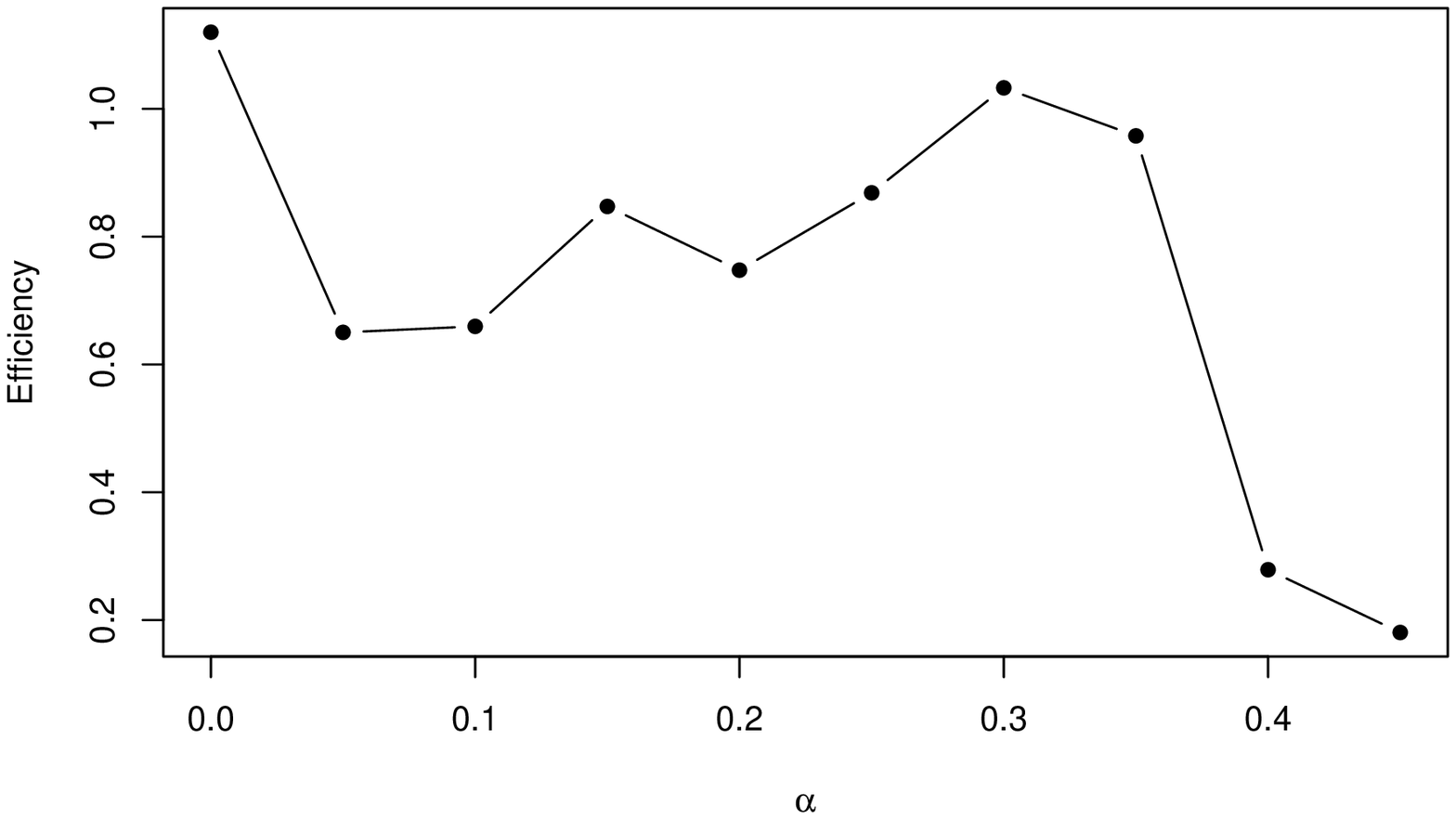}
	
	\caption{Scatter plot (Up) of the Combined Cycle Power Plant Data and Efficiency (Down) of $\hat{g}_{n,\alpha} (x_0)$ relative to $\hat{g}_{n,NW} (x_0)$ at $x_0 =0.50$ plotted for different values of $\alpha$.}
	\label{realdata1}
	\end{center}
\end{figure}

{\bf Parkinson's Telemonitoring Data Set:} This data set consists of 5875 recordings of several medical voice measures from forty two people with early-stage Parkinson's disease recruited to a six month trial of a telemonitoring device, for remote symptom progression monitoring (see \citeA{tsanas2009accurate} for details). This data can be accessed with the following link: \url{https://archive.ics.uci.edu/ml/datasets/Parkinsons+Telemonitoring}. The data has 22 attributes, out of which two attributes are of our interest for this study: NHR (measure of ratio of noise to tonal components in the voice) and RPDE (A nonlinear dynamical complexity measure). We consider the attribute NHR as our co-variate or independent variable ($X$) and RPDE as our response or dependent variable ($Y$). We provide a scatter plot of RPDE against NHR variable in the first diagram of Figure \ref{realdata2}. Unlike the earlier data analysis, the transformation of the co-variate has not been done here as the values of NHR variable belongs to $[0, 1]$.

Here also, we compute the Bootstrap efficiency $\hat{g}_{n,\alpha} (x_{0})$ relative to $\hat{g}_{n, NW} (x_{0})$ based on the data $(Y, X)$, and the procedure is same as it is described in the earlier real data analysis. In the second diagram of Figure \ref{realdata2}, we plot the Bootstrap efficiency of $\hat{g}_{n,\alpha} (x_{0})$ relative to $\hat{g}_{n, NW} (x_{0})$ for different values of $\alpha$ for this data (here $n = 5875$). In this study also, we consider Epanechnikov kernel with bandwidth $h_{n} = \frac{n^{-\frac{1}{2}}}{2}$ and $B = 1000$. Here also, the second diagram of Figure \ref{realdata1} indicates the efficiency of $\hat{g}_{n,\alpha}$ relative to $\hat{g}_{n, NW}$ is substantially high for a wide range of $\alpha$, and as we indicated in the earlier study, the most probable reason is that the data has a few influential observations.

\begin{figure}
	\begin{center}
	
	\includegraphics[scale=.7]{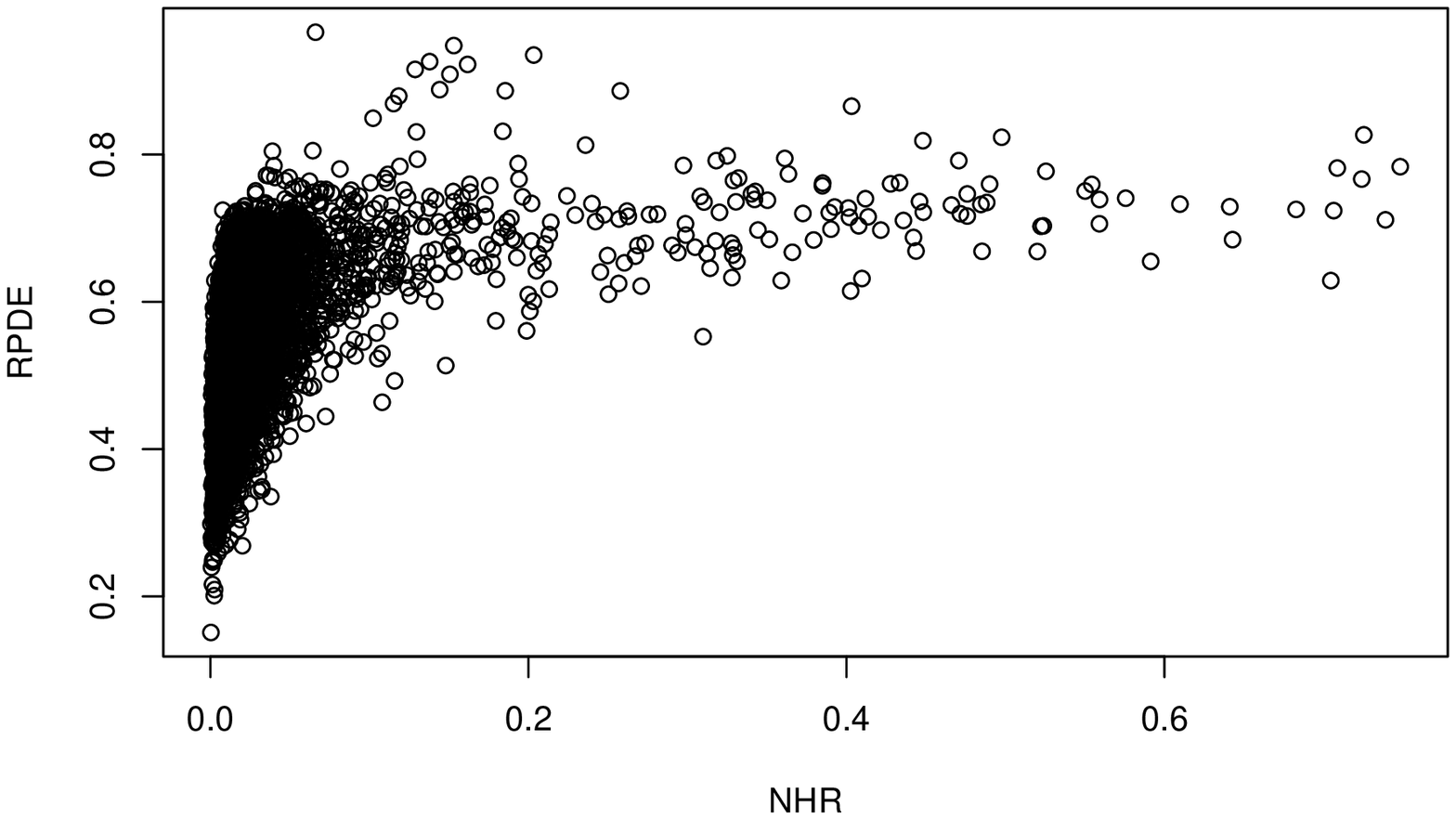}
	\includegraphics[scale=.7]{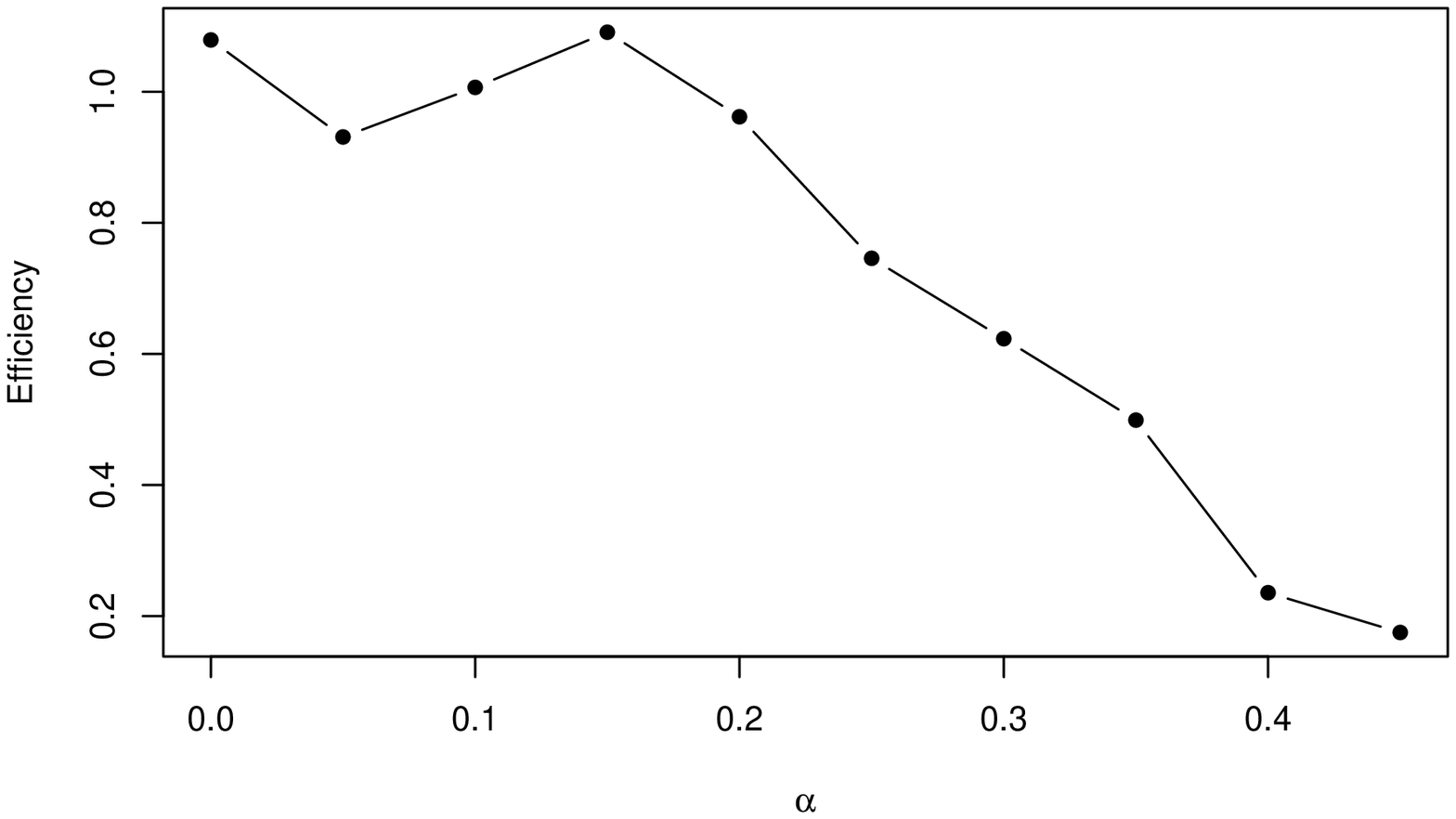}
	
	\caption{Scatter plot (Up) of the Parkinson's Telemonitoring Data and Efficiency (Down) of $\hat{g}_{n,\alpha} (x_0)$ relative to $\hat{g}_{n,NW} (x_0)$ at $x_0 =0.50$ plotted for different values of $\alpha$.}
	\label{realdata2}
	\end{center}
\end{figure}

{\bf Air Quality Data Set:} This data set contains 9358 instances of hourly averaged responses from an array of five metal oxide chemical sensors embedded in an Air Quality Chemical Multisensor Device (see \citeA{de2008field} for details). The device was set up in a polluted area of an Italian city, at road level. The data set can be accessed with the following link: \url{https://archive.ics.uci.edu/ml/datasets/Air+quality}. There are thirteen attributes in this data apart from date and time: \\
1 True hourly averaged concentration $CO$ in $mg/m^3$ (reference analyzer). \\
2 PT08.S1 (tin oxide) hourly averaged sensor response (nominally CO targeted). \\
3 True hourly averaged overall Non Metanic Hydrocarbons concentration in $microg/m^3$ (reference analyzer). \\
4 True hourly averaged Benzene concentration in $microg/m^3$ (reference analyzer). \\
5 PT08.S2 (titania) hourly averaged sensor response (nominally NMHC targeted). \\
6 True hourly averaged NOx concentration in ppb (reference analyzer). \\
7 PT08.S3 (tungsten oxide) hourly averaged sensor response (nominally NOx targeted). \\
8 True hourly averaged NO2 concentration in $microg/m^3$ (reference analyzer). \\
9 PT08.S4 (tungsten oxide) hourly averaged sensor response (nominally NO2 targeted). \\
10 PT08.S5 (indium oxide) hourly averaged sensor response (nominally O3 targeted).\\
11 Temperature in $^{\circ}$C. \\
12 Relative Humidity (\%). \\
13 AH: Absolute Humidity. \\
We consider the quantity of Tungsten Oxide as the co-variate or the independent variable ($X$), and the Absolute Humidity is considered as the response or dependent variable ($Y$). We provide a scatter plot of Absolute Humidity against Tungsten Oxide in the first diagram of Figure \ref{realdata3}, and the scatter plot indicates that the data does not have as such any influential or outlier observations.

In the study, as we did for the first real data analysis,  the co-variate is scaled to the interval $[0, 1]$ using the transformation $x^{*} = \frac{x - \min (x)}{\max(x) - \min(x)}$, where $x^{*}$ is the transformed variable, and we here also adopt Bootstrap methodology to compute the efficiency. The computational procedure of the Bootstrap efficiency is same as we described in the first real data analysis. In the second diagram of Figure \ref{realdata3}, we plot the Bootstrap efficiency of $\hat{g}_{n,\alpha} (x_{0})$ relative to $\hat{g}_{n, NW} (x_{0})$ for different values of $\alpha$ for this data (here $n = 9358$). In this study also, we consider Epanechnikov kernel with bandwidth $h_{n} = \frac{n^{-\frac{1}{2}}}{2}$ and $B = 1000$. The second diagram of Figure \ref{realdata3} indicates the efficiency of $\hat{g}_{n,\alpha}$ relative to $\hat{g}_{n, NW}$ is high for small values $\alpha$, and decreases gradually as $\alpha$ increases. It is expected since the data does not have any substantial outliers, $\hat{g}_{n,\alpha} (x_{0})$ performs well for small values of $\alpha$, i.e., when $\hat{g}_{n,\alpha}$ is almost same as $\hat{g}_{n, NW}$. On the other hand, the efficiency of $\hat{g}_{n,\alpha}$ steadily goes down for larger values of $\alpha$ as $\hat{g}_{n,\alpha}$ is different from $\hat{g}_{n, NW}$ by a great amount when $\alpha$ is large.

\begin{figure}
	\begin{center}
	
	\includegraphics[scale=.7]{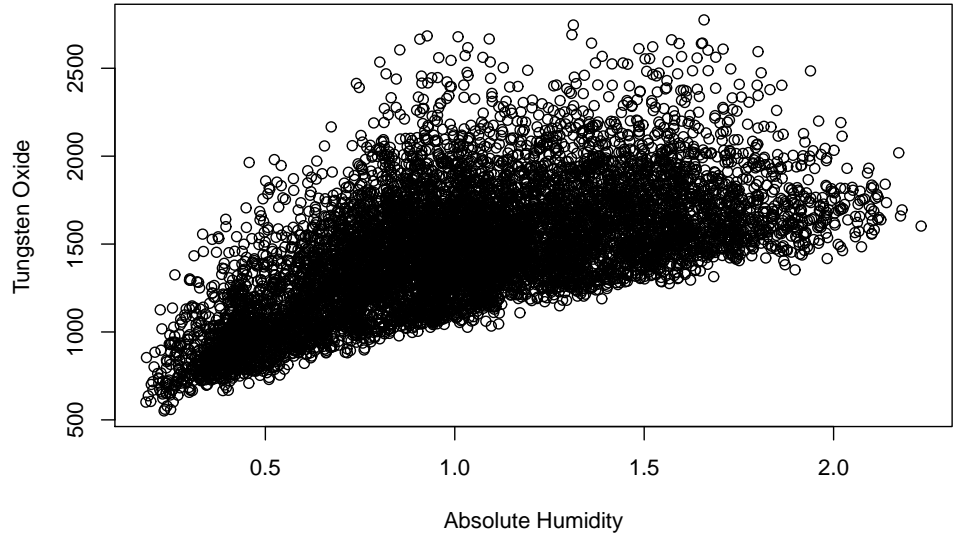}
	\includegraphics[scale=.7]{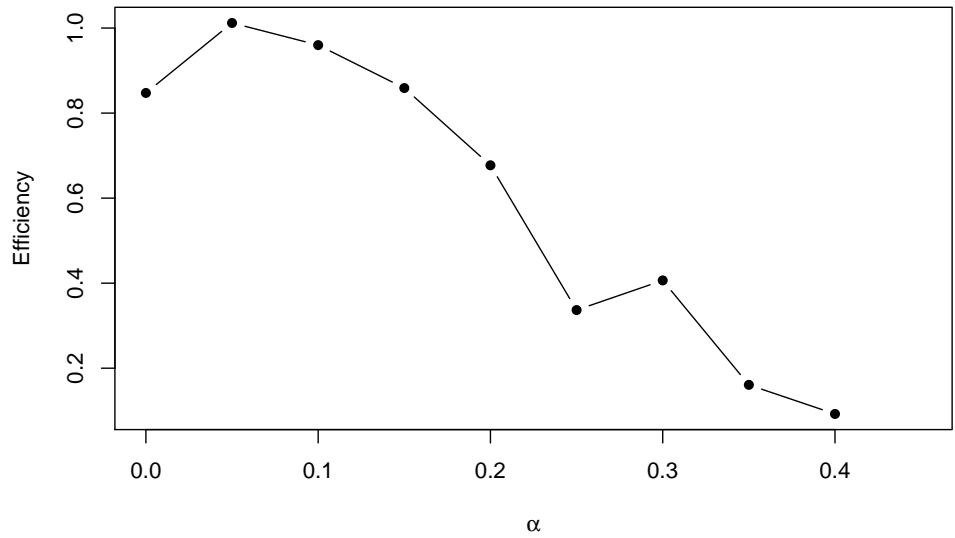}
	
	\caption{Scatter plot (Up) of the Air Quality Data and Efficiency (Down) of $\hat{g}_{n,\alpha} (x_0)$ relative to $\hat{g}_{n,NW} (x_0)$ at $x_0 =0.50$ plotted for different values of $\alpha$.}
	\label{realdata3}
	\end{center}
\end{figure}


\section{Concluding Remarks}
 
{\bf Local linear or local polynomial version:} 

The estimator studied in this article is a local constant trimmed mean for the non-parametric regression function (see (\ref{trimargmin})). Following the same spirit of (\ref{trimargmin}), one can define a local linear or even local polynomial version of the trimmed mean for non-parametric regression function. However, at the same time, adding more variables may create various problems associated with the issue of variable selection. Choosing appropriate degree of polynomial version of the trimmed mean may be an interest of future research.  Besides, one of the well-known problem of using the local constant estimator is the adverse effect of boundary (see e.g., \citeA{fan1996local}). However, since the trimming based estimator is based on the ordered observations and the procedure of trimming, the proposed estimator can avoid the negative effect of boundary. 

\vspace{0.1in}

\noindent{\bf Uniform convergence and influence function:}

In Theorem \ref {T1}, we stated the pointwise weak convergence of $\hat{g}_{n,\alpha} (x)$, and it is indeed true that the result would be more appealing if one can establish the process convergence of $\hat{g}_{n,\alpha}(x)$, which  allows us to study the related testing of hypothesis problem based on $\hat{g}_{n,\alpha} (x)$. However, to prove the process convergence of  $\hat{g}_{n,\alpha} (x)$, one needs to establish the tightness property of $\hat{g}_{n,\alpha} (x)$, which is not easily doable. Regarding the robustness property of $\hat{g}_{n,\alpha} (x)$, along with the breakdown point, one may also consider the gross error sensitivity (see \citeA{huber1981robust}, p.14) as a measure of robustness. However, since the gross error sensitivity only measures the local robustness of an estimator whereas the breakdown point measures the global robustness of the estimator, we here investigate the breakdown point of the proposed estimator. 

\vspace{0.1in}

\noindent{\bf The choice of kernel function and bandwidth:}

The choice of kernel function along with its bandwidth another issue of concern. In our numerical study, we consider Epanechnikov kernel since it is the most efficient kernel among the symmetric kernel (see \citeA{silverman1986density}, p.59). Regarding the choice of bandwidth, we consider $h_{n} = \frac{n^{-\frac{1}{2}}}{2}$ since it satisfies $h_{n}\rightarrow 0$ and $n h_{n}\rightarrow\infty$ as $n\rightarrow\infty$. However, since the aforesaid criterion is asymptotic in nature, one can adopt the methodology based on data driven approach but using such choice of bandwidth, deriving the asymptotic distribution of the proposed estimator will become more challenging. 

\vspace{0.1in}

\noindent{\bf Main contribution of this article:}

In this article, we propose a new estimator for the non-parametric regression function, which coincides with the well-known Nadarya-Watson estimator as a special case. The characterization of the proposed estimator through an optimization problem is also discussed. In the study, we have observed that the proposed estimator can maintain a good efficiency with high break down point for a wide range of trimming proportion, which is a rare attribute of any estimator. The estimator performs well on real data as well.


\section{Appendix}

\subsection{Appendix A : Proofs}

We first present a fact, which is used to compute the asymptotic efficiency of $\hat{g}_{n,\alpha} (x_{0})$ relative to $\hat{g}_{n, NW} (x_{0})$ in Section 3.1. 

\noindent {\bf Fact A:} {\it Let $X_{(i)}$ be the $i$-th order statistic of the i.i.d.\ random variables $\{X_1,\ldots ,X_n\}$ with common density $f_X$ and distribution function $F_X$. Suppose that $f_{X_{(i)}}$ is the probability density function of $X_{(i)}$ for $i=1,\ldots ,n$. Then for any $x_{0}$,  
$$\frac{1}{n}\sum_{i = 1}^{n} f_{X_{(i)}} (x_{0}) = f_{X} (x_{0}).$$}

\noindent {\bf Proof of Fact A:} Note that for any arbitrary $x_0$, and $i=1,\ldots ,n$, we have 
\begin{equation*}
f_{X_{(i)}} (x_0) = \frac{n!}{(i-1)! (n-i)!} [F_X (x_0)]^{i-1} [1-F_X (x_0)]^{n-i} f_X (x_0).
\end{equation*}
Now, taking sum over $i$, we have
\begin{equation*}
\begin{split}
\sum_{i=1}^{n} f_{X_{(i)}} (x_0) &= \sum_{i=1}^{n} \frac{n!}{(i-1)! (n-i)!} [F_X (x_0)]^{i-1} [1-F_X (x_0)]^{n-i} f_X (x_0) \\
&= n \sum_{i=1}^{n} \frac{(n-1)!}{(i-1)! (n-i)!} [F_X (x_0)]^{i-1} [1-F_X (x_0)]^{n-i} f_X (x_0) \\
&= n [F_X (x_0) +1 -F_X (x_0)]^{n-1} f_X (x_0) = n f_X (x_0).
\end{split}
\end{equation*}
Hence, $\frac{1}{n} \sum_{i=1}^{n} f_{X_{(i)}} (x_0) = f_X (x_0)$ for any arbitrary $x_0$, which completes the proof. \hfill$\Box$

\vspace{0.25in}

\noindent {\bf Fact B:} {\it 
\begin{equation*}
AE(\hat{g}_{n,\alpha} (x), \hat{g}_{n, NW} (x)) = \frac{(1 - 2\alpha) t_{\alpha} (x)}{f_{X}(x)}\leq 1
\end{equation*} for any $x$ and $\alpha\in [0, \frac{1}{2})$. Here the notations are same as defined in Section 3.1.}

\noindent {\bf Proof of Fact B:} Using the form of the probability density function of $X_{(i)}$, we have 
\begin{equation}
\label{bin}
\begin{split}
t_{\alpha} (x) = \frac{1}{(n-2[n\alpha])} \sum\limits_{i=[n \alpha] +1}^{n - [n \alpha]} f_{X_{(i)}} (x) &= \frac{1}{(n-2[n\alpha])} \sum\limits_{i=[n \alpha] +1}^{n - [n \alpha]} n C(n-1,i-1) [F_{X}(x)]^{i-1} [1-F_{X}(x)]^{n-i} f_X (x) \\
&= \frac{n f_X (x)}{(n-2[n\alpha])} \sum\limits_{i=[n \alpha] +1}^{n - [n \alpha]} C(n-1,i-1) [F_{X}(x)]^{i-1} [1-F_{X}(x)]^{n-i},
\end{split}
\end{equation}
where $F_{X}$ is the distribution function of $X$ and $C(n-1,i-1) = \frac{(n - 1)!}{(i - 1)! (n - i)!}$. We also note that,
\begin{equation}
\label{bin2}
\sum\limits_{i=[n \alpha] +1}^{n - [n \alpha]} C(n-1,i-1) [F_{X}(x)]^{i-1} [1-F_{X}(x)]^{n-i} \leqslant \sum\limits_{i=1}^{n} C(n-1,i-1) [F_{X}(x)]^{i-1} [1-F_{X}(x)]^{n-i} =1,
\end{equation}
for all $n$ and any fixed $\alpha \in [0,\frac{1}{2})$. Hence, (\ref{bin}) and (\ref{bin2}) together give us 
\begin{equation*}
t_{\alpha} (x) = \lim_{n\to \infty} \frac{1}{(n-2[n\alpha])} \sum\limits_{i=[n \alpha] +1}^{n - [n \alpha]} f_{X_{(i)}} (x) \leqslant \lim_{n\to \infty} \frac{n f_X (x)}{(n-2[n\alpha])} = \frac{f_X (x)}{(1-2\alpha)}.
\end{equation*}
It completes the proof.\hfill$\Box$

\vspace{8mm}

In order to prove Theorem \ref{T1}, one needs the following lemmas. 
\vspace{5mm}

\begin{lemma}
\label{L1}
Let $\displaystyle \hat{f}_{n, \alpha} (x_0) = \frac{1}{(n - 2[n \alpha]) h_n} \sum\limits_{i = [n \alpha] +1}^{n -[n \alpha]} K\left( \frac{X_{(i)} -x_0}{h_n} \right)$, where $K$ is the kernel function. Then, under (A1)-(A5), for any arbitrary $x_{0}$, $\displaystyle \hat{f}_{n, \alpha} (x_0) \xrightarrow{\makebox[5mm]{p}} \lim_{n\to \infty} \frac{1}{(n-2[n\alpha])} \sum\limits_{i=[n \alpha] +1}^{n - [n \alpha]} f_{X_{(i)}} (x_0)$ as $n\to \infty$, where $f_{X_{(i)}}(\cdot)$ is the density function of $i$-th order statistic $X_{(i)}$.
\end{lemma}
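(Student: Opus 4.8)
The plan is to establish the convergence in probability through a bias--variance decomposition combined with Chebyshev's inequality. Writing $Z_{n,i}=K\!\left(\frac{X_{(i)}-x_0}{h_n}\right)$, each summand is bounded by (A3) (as $K$ is a bounded density) and, since $K$ is supported on $(-\tau,\tau)$, is nonzero only when $X_{(i)}$ lies in the shrinking window $W_n=(x_0-\tau h_n,\,x_0+\tau h_n)$. Thus it would suffice to show $E[\hat{f}_{n,\alpha}(x_0)]\to t_\alpha(x_0)$ and $\mathrm{Var}[\hat{f}_{n,\alpha}(x_0)]\to 0$. The expectation is routine: inserting the order-statistic density $f_{X_{(i)}}(x)=\tfrac{n!}{(i-1)!(n-i)!}[F_X(x)]^{i-1}[1-F_X(x)]^{n-i}f_X(x)$ and substituting $u=(x-x_0)/h_n$ gives
$$E[\hat{f}_{n,\alpha}(x_0)]=\frac{1}{n-2[n\alpha]}\sum_{i=[n\alpha]+1}^{n-[n\alpha]}\int K(u)\,f_{X_{(i)}}(x_0+u h_n)\,du,$$
which, by continuity of $f_X$ near $x_0$, (A3)(a) and dominated convergence, tends to $t_\alpha(x_0)$.

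The main obstacle is the variance, and it arises precisely because the $X_{(i)}$ are order statistics: the summands $Z_{n,i}$ are dependent, so the variance of their sum is not the sum of the variances, and controlling the order-statistic covariances directly is unpleasant. (A related nuisance is that the densities $f_{X_{(i)}}$ are sharply peaked, which makes the bias estimate above awkward to render uniform in $i$.) I would bypass both difficulties by reducing, on a high-probability event, to the ordinary kernel density estimator, which is an average of i.i.d.\ terms.

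Let $\xi_\alpha$ and $\xi_{1-\alpha}$ denote the population $\alpha$- and $(1-\alpha)$-quantiles of $X$, and first treat the interior case $\xi_\alpha<x_0<\xi_{1-\alpha}$. On the event $A_n=\{X_{([n\alpha])}<x_0-\tau h_n\}\cap\{X_{(n-[n\alpha]+1)}>x_0+\tau h_n\}$ the trimmed-away order statistics all lie outside $W_n$ and hence contribute nothing, so every order statistic inside $W_n$ carries an untrimmed index; since summing the kernel over order statistics equals summing over the original sample, this yields
$$\hat{f}_{n,\alpha}(x_0)=\frac{n}{n-2[n\alpha]}\cdot\frac{1}{nh_n}\sum_{i=1}^{n}K\!\left(\frac{X_i-x_0}{h_n}\right)=\frac{n}{n-2[n\alpha]}\,\hat{f}_n(x_0),$$
a fixed multiple of the ordinary Nadaraya kernel density estimator $\hat{f}_n$, whose variance is $O\big((nh_n)^{-1}\big)\to 0$ since $nh_n\to\infty$. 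By consistency of the sample quantiles, $X_{([n\alpha])}\xrightarrow{p}\xi_\alpha<x_0$ and $X_{(n-[n\alpha]+1)}\xrightarrow{p}\xi_{1-\alpha}>x_0$, together with $h_n\to 0$, one has $P(A_n)\to 1$.

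Combining these, Chebyshev's inequality applied to the i.i.d.\ representation together with $P(A_n)\to 1$ gives $\hat{f}_{n,\alpha}(x_0)\xrightarrow{p}\frac{f_X(x_0)}{1-2\alpha}=t_\alpha(x_0)$ in the interior, where the last identification is exactly the binomial-tail computation of \textbf{Fact B} (the middle binomial probability tends to $1$ by the law of large numbers for $\mathrm{Bin}(n-1,F_X(x_0))$). The exterior case $x_0<\xi_\alpha$ or $x_0>\xi_{1-\alpha}$ is handled by the same quantile-consistency argument: with probability tending to one $W_n$ contains no untrimmed observation, so $\hat{f}_{n,\alpha}(x_0)\xrightarrow{p}0=t_\alpha(x_0)$. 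The only remaining delicate points are the boundary values $F_X(x_0)\in\{\alpha,1-\alpha\}$, where the binomial central limit theorem puts a factor $\tfrac12$ into $t_\alpha(x_0)$ and $P(A_n)\to\tfrac12$; these I would treat by a separate direct second-moment estimate on $W_n$.
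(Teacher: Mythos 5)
Your proposal is correct on the main set of points but takes a genuinely different route from the paper, and the comparison is instructive. The paper proves the lemma by a direct bias--variance computation on the trimmed sum of order statistics: its expectation step is essentially identical to yours (substitute $z=(u-x_0)/h_n$, apply dominated convergence), but for the variance it expands $\mathrm{var}\bigl(\sum_i Z_{n,i}\bigr)$ into diagonal and covariance terms and then asserts, citing $|\mathrm{cov}(U,W)|\leq\sqrt{\mathrm{var}(U)\mathrm{var}(W)}$, that it suffices to show the normalized diagonal second moments vanish. That reduction is not actually valid: Cauchy--Schwarz bounds the $O(n^2)$ covariance terms only by a quantity of order $(n-2[n\alpha])\sum_i\mathrm{var}(Z_{n,i})$, and after dividing by $h_n^2(n-2[n\alpha])^2$ this is $O(1/h_n)$, which diverges. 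Your device --- working on the event $A_n$ on which the trimmed-away order statistics fall outside the kernel window, so that $\hat{f}_{n,\alpha}(x_0)$ coincides with $\frac{n}{n-2[n\alpha]}$ times the ordinary i.i.d.\ kernel density estimator --- removes the order-statistic dependence entirely, uses only the classical $O\bigl((nh_n)^{-1}\bigr)$ variance bound, and as a by-product identifies the limit explicitly: $t_\alpha(x_0)=f_X(x_0)/(1-2\alpha)$ for $\xi_\alpha<x_0<\xi_{1-\alpha}$ and $t_\alpha(x_0)=0$ outside, something the paper never makes explicit (its Fact B only gives the inequality). So where your argument applies it is, if anything, more rigorous than the paper's.

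The genuine gap is the boundary case $F_X(x_0)\in\{\alpha,1-\alpha\}$, and it cannot be closed by the ``separate second-moment estimate'' you promise. At such a point $X_{([n\alpha])}-x_0=O_p(n^{-1/2})$, while (A4) only gives an upper bound $h_n=O(n^{-1/3})$; the bandwidth is still allowed to satisfy $\sqrt{n}\,h_n\to 0$ (e.g.\ $h_n=n^{-0.6}$, for which $nh_n\to\infty$ holds). In that regime the entire window $(x_0-\tau h_n,\,x_0+\tau h_n)$ sits inside the quantile fluctuation: with probability tending to $\tfrac12$ the sample $\alpha$-quantile lies below the window (and the trimmed estimator equals $\frac{n}{n-2[n\alpha]}\hat{f}_n(x_0)\to f_X(x_0)/(1-2\alpha)$), and with probability tending to $\tfrac12$ it lies above the window (and every point in the window is trimmed, giving $0$). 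Hence $\hat{f}_{n,\alpha}(x_0)$ converges in law to a two-point mixture, not in probability to $t_\alpha(x_0)=f_X(x_0)/\bigl(2(1-2\alpha)\bigr)$; your factor-$\tfrac12$ picture is correct only when $\sqrt{n}\,h_n\to\infty$. This is really a defect of the lemma as stated for \emph{all} $x_0$ (the paper's own variance argument glosses over exactly this point), but in your write-up you should state the exclusion $F_X(x_0)\neq\alpha,\,1-\alpha$ (or add the hypothesis $\sqrt{n}\,h_n\to\infty$) rather than defer the boundary case as if it were routine. You should also record that your bias step and quantile-consistency step use continuity of $f_X$ at $x_0$ and strict monotonicity of $F_X$ near $\xi_\alpha,\xi_{1-\alpha}$, mild conditions in the spirit of (A1)--(A2) but not literally stated there.
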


\begin{proof}
To prove this lemma, it is enough to show that for any arbitrary $x_{0}$, $E[\hat{f}_{n, \alpha} (x_{0})]\rightarrow\displaystyle\lim_{n\rightarrow\infty}\frac{1}{(n - 2[n\alpha])}\sum\limits_{i =  [n\alpha]+1}^{n - [n\alpha]} f_{X_{i}}(x_{0})$ as $n\rightarrow\infty$ and variance$[\hat{f}_{n, \alpha} (x_{0})]\rightarrow 0$ as $n\rightarrow\infty$. 

\vspace{0.5in}

We now consider 
\begin{equation*}
\begin{split}
E\left(\hat{f}_{n, \alpha} (x_0)\right) &= \frac{1}{h_n (n - 2[n \alpha])} \sum\limits_{i=[n \alpha] +1}^{n - [n \alpha]} E\left[ K\left( \frac{X_{(i)}-x_0}{h_n} \right) \right] \\
&= \frac{1}{h_n (n - 2[n \alpha])} \sum\limits_{i=[n \alpha] +1}^{n - [n \alpha]} \int K\left( \frac{u-x_0}{h_n} \right) f_{X_{(i)}} (u) du
\end{split}
\end{equation*}
Using $\displaystyle \frac{u-x_0}{h_n} =z$ yields
\begin{equation*}
\begin{split}
E\left(\hat{f}_{n, \alpha} (x_0)\right) &= \frac{1}{h_n (n - 2[n \alpha])} \sum\limits_{i=[n \alpha] +1}^{n - [n \alpha]} h_n \int K(z) f_{X_{(i)}} (x_0 +h_n z) dz \\
&= \frac{1}{(n - 2[n \alpha])} \sum\limits_{i=[n \alpha] +1}^{n - [n \alpha]} \int K(z) f_{X_{(i)}} (x_0 +h_n z) dz \\
& \to \lim_{n\to \infty} \frac{1}{(n - 2[n \alpha])} \sum\limits_{i=[n \alpha] +1}^{n - [n \alpha]} f_{X_{(i)}} (x_0)
\end{split}
\end{equation*}
as $n\to \infty$, for each $x_0$. The last implication follows from the application of dominated convergence theorem using the facts that $f_{X_{(i)}}$ is a bounded function (follows from (A2)), $h_{n}\rightarrow 0$ as $n\rightarrow\infty$ and $\int\limits_{-\tau}^{\tau} k(z) dz = 1$.

Next, consider ($var$ denotes the variance and $cov$ denotes the co-variance)
\begin{equation}
\begin{split}
\label{L1e1}
var \left(\hat{f}_{n,\alpha} (x_0)\right) &= \frac{1}{h_n^2 (n - 2[n \alpha])^2} \sum\limits_{i=[n \alpha] +1}^{n - [n \alpha]} var\left[ K\left( \frac{X_{(i)}-x_0}{h_n} \right) \right] \\
& + \frac{1}{h_{n}^{2} (n - 2[n\alpha])^{2}}\sum\limits_{i\neq j = [n\alpha] +1}^{n - [n\alpha]} cov\left[K\left(\frac{X_{(i)}- x_{0}}{h_{n}}\right), K\left(\frac{X_{(j)} - x_{0}}{h_{n}}\right)\right]\\
&= \frac{1}{h_n^2 (n - 2[n \alpha])^2} \sum\limits_{i=[n \alpha] +1}^{n - [n \alpha]} E\left[ K\left( \frac{X_{(i)}-x_0}{h_n} \right)^2 \right] - E\left[\left\{K\left( \frac{X_{(i)}-x_0}{h_n} \right)\right\} \right]^2 \\
& + \frac{1}{h_{n}^{2}(n - 2[n\alpha])^{2}} \sum\limits_{i\neq j = [n\alpha] + 1}^{n - [n\alpha]} cov\left[K\left(\frac{X_{(i)} - x_{0}}{h_{n}}\right), K\left(\frac{X_{(j)} - x_{0}}{h_{n}}\right)\right]\\
&= \frac{1}{h_n^2 (n - 2[n \alpha])^2} \sum\limits_{i=[n \alpha] +1}^{n - [n \alpha]} \int \left[K\left( \frac{u-x_0}{h_n} \right)\right]^2 f_{X_{(i)}} (u) du \\
&- \frac{1}{h_n^2 (n - 2[n \alpha])^2} \sum\limits_{i=[n \alpha] +1}^{n - [n \alpha]} \left( \int K\left( \frac{u-x_0}{h_n} \right) f_{X_{(i)}} (u) du \right)^2\\
& + \frac{1}{h_{n}^{2}(n - 2[n\alpha])^{2}} \sum\limits_{i\neq j = [n\alpha] + 1}^{n - [n\alpha]} cov\left[K\left(\frac{X_{(i)} - x_{0}}{h_{n}}\right), K\left(\frac{X_{(j)} - x_{0}}{h_{n}}\right)\right].
\end{split}
\end{equation}

Note that since for any random variable Y, $var(Y) \leq E[Y^{2}]$ and $var(Y)\geq 0$ along with the fact that for any two random variables $X$ and $Y$, $|cov (X, Y)|\leq\sqrt{var(X) var(Y)}$, it is now enough to show that $\frac{1}{h_n^2 (n - 2[n \alpha])^2} \sum\limits_{i=[n \alpha] +1}^{n - [n \alpha]} \int \left[K\left( \frac{u-x_0}{h_n} \right)\right]^2 f_{X_{(i)}} (u) du\rightarrow 0$ as $n\rightarrow\infty$. Using the transformation $\displaystyle z=\frac{u-x_0}{h_n}$, we have
\begin{equation*}
\begin{split}
\frac{1}{h_n^2 (n - 2[n \alpha])^2} & \sum\limits_{i=[n \alpha] +1}^{n - [n \alpha]} \int \left[K\left( \frac{u-x_0}{h_n} \right)\right]^2 f_{X_{(i)}} (u) du \\ 
&= \frac{1}{h_n^2 (n - 2[n \alpha])^2} \sum\limits_{i=[n \alpha] +1}^{n - [n \alpha]} h_n \int \{K(z)\}^2 f_{X_{(i)}} (x_0 +h_n z) dz \\
&= \frac{1}{h_n (n - 2[n \alpha])} \frac{1}{(n - 2[n \alpha])} \sum\limits_{i=[n \alpha] +1}^{n - [n \alpha]} \int\{K(z)\}^2 f_{X_{(i)}} (x_0 +h_n z) dz.
\end{split}
\end{equation*}
Here since $nh_{n}\rightarrow\infty$ as $n\rightarrow\infty$, $\displaystyle \lim_{n\to \infty} h_n^{-1} (n - 2[n \alpha])^{-1} =0$ for a fixed $\alpha\in [0, \frac{1}{2})$, we have
\begin{equation*}
\frac{1}{(n - 2[n \alpha])} \sum\limits_{i=[n \alpha] +1}^{n - [n \alpha]} \int \{K(z)\}^2 f_{X_{(i)}} (x_0 +h_n z) dz \to \lim_{n\to \infty} \frac{1}{(n - 2[n \alpha])} \sum\limits_{i=[n \alpha] +1}^{n - [n \alpha]} f_{X_{i}} (x_0)
\end{equation*}
as $n\to \infty$ for each $x_0$ using the same argument provided for $E [\hat{f}_{n, \alpha} (x_{0})]$. Thus $$\displaystyle \frac{1}{h_n^2 (n - 2[n \alpha])^2} \sum\limits_{i=[n \alpha] +1}^{n - [n \alpha]} \int K\left( \frac{u-x_0}{h_n} \right)^2 f_{X_{i}} (u) du\to 0$$ as $n\to \infty$, and hence, 
$var \left(\hat{f}_{n, \alpha} (x_{0})\right)\rightarrow 0$ as $n\rightarrow\infty$. It completes the proof.

\end{proof}

\begin{lemma}
\label{L2}
Let
\begin{equation}
\hat{m}_{1, n} (x_0) = \frac{1}{ (n - 2[n \alpha]) h_n} \sum\limits_{i = [n \alpha] +1}^{ n - [n \alpha]} K \left( \frac{X_{(i)} -x_0}{h_n} \right)\left( g(X_{(i)}) - g(x_0) \right),
\end{equation}
where $K$ is the kernel function, $g$ is the regression function and $X_{(i)}$ is the $i$-th order statistic. Then under (A1)-(A5), we have
\begin{equation*}
E ( \hat{m}_{1, n} (x_0) ) = \frac{h_n^2 g^{\prime \prime} (x_0)\, k_2}{2 \left( n - 2[n \alpha]\right)} \sum\limits_{i = [n \alpha] +1}^{n - [n \alpha]} f_{X_{(i)}} (x_0) + \frac{h_n^2 g^{\prime} (x_0)\, k_2}{\left( n - 2[n \alpha]\right)} \sum\limits_{i = [n \alpha] +1}^{n - [n \alpha]} f^{\prime}_{X_{(i)}} (x_0) +o(h_n^2)
\end{equation*}
for each $x_0$, where $g^{\prime}$ and $g^{\prime \prime}$ are the notations for first and second derivatives of $g$, respectively, $f_{X_{(i)}} (\cdot)$ is the probability density function of $i$-th order statistic $X_{(i)}$ with its derivative $f_{X_{(i)}}^{'}$  and $\displaystyle k_2 = \int v^2K(v) dv$.
\end{lemma}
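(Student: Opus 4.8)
The plan is to compute $E(\hat{m}_{1,n}(x_0))$ directly, writing each summand as an integral against the density $f_{X_{(i)}}$ of the $i$-th order statistic, rescaling the kernel argument, and then Taylor-expanding both the regression increment and the order-statistic density about $x_0$ before invoking the moment conditions on $K$. First I would use linearity of expectation to write
\[
E(\hat{m}_{1,n}(x_0)) = \frac{1}{(n-2[n\alpha])h_n} \sum_{i=[n\alpha]+1}^{n-[n\alpha]} \int K\!\left(\frac{u-x_0}{h_n}\right)\bigl(g(u)-g(x_0)\bigr) f_{X_{(i)}}(u)\, du,
\]
and then apply the substitution $z=(u-x_0)/h_n$, i.e.\ $u=x_0+h_n z$ with $du=h_n\,dz$, which converts each summand into $\int K(z)\bigl(g(x_0+h_n z)-g(x_0)\bigr) f_{X_{(i)}}(x_0+h_n z)\,dz$, the factor $h_n$ cancelling the $h_n^{-1}$ in the definition of $\hat{m}_{1,n}$.

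Next I would Taylor-expand the two factors inside the integral. By (A1) the regression function is twice continuously differentiable, so $g(x_0+h_n z)-g(x_0)=h_n z\, g'(x_0)+\tfrac{1}{2}h_n^2 z^2 g''(x_0)+o(h_n^2)$, and expanding the order-statistic density to first order gives $f_{X_{(i)}}(x_0+h_n z)=f_{X_{(i)}}(x_0)+h_n z\, f'_{X_{(i)}}(x_0)+o(h_n)$. Multiplying the two expansions and integrating against $K(z)$, the leading $O(h_n)$ contribution is $h_n g'(x_0) f_{X_{(i)}}(x_0)\int z K(z)\,dz=0$ by the symmetry condition (A3b), while the two surviving $O(h_n^2)$ contributions are $\tfrac{1}{2}h_n^2 g''(x_0) f_{X_{(i)}}(x_0)\int z^2 K(z)\,dz$ and $h_n^2 g'(x_0) f'_{X_{(i)}}(x_0)\int z^2 K(z)\,dz$, each carrying the factor $k_2=\int v^2 K(v)\,dv$. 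Summing over $i$ and dividing by $(n-2[n\alpha])$ then reproduces exactly the two stated main terms.

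The hard part will be controlling the Taylor remainders uniformly in $i$, so that after summing over the roughly $n$ retained indices and dividing by $n-2[n\alpha]$ the aggregate error is genuinely $o(h_n^2)$ and not merely $o(h_n^2)$ for each fixed summand. Here the compact support $[-\tau,\tau]$ of $K$ from (A3) is essential: it confines the expansion point $x_0+h_n z$ to a shrinking neighbourhood of $x_0$, on which the continuity of $g''$ (A1) and the boundedness of $f_X$ (A2) give uniform control of the integrands and hence a uniform remainder bound. One subtlety worth flagging is that the claimed expression involves $f'_{X_{(i)}}$, so differentiability of the order-statistic densities is required; this follows from differentiability of $f_X$, since $f_{X_{(i)}}$ is a fixed polynomial in $F_X$ multiplied by $f_X$, and I would treat this smoothness as implicit in the standing hypotheses. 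Once the uniform remainder estimate is in place, the same dominated convergence reasoning already invoked in Lemma~\ref{L1} passes the $o(h_n^2)$ term through the average, which completes the proof.
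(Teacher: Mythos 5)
Your proposal is correct and follows essentially the same route as the paper's proof: linearity of expectation, the substitution $z=(u-x_0)/h_n$, Taylor expansion of both $g$ and $f_{X_{(i)}}$ about $x_0$, cancellation of the linear term via (A3b), and collection of the two $k_2$ terms. If anything, you are more careful than the paper, which expands each summand separately and does not explicitly address uniformity of the remainders in $i$ or the differentiability of $f_X$ implicitly required for $f'_{X_{(i)}}$ to exist.
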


\begin{proof}
Note that 
\begin{equation*}
\begin{split}
E ( \hat{m}_{1, n} (x_0) ) & = E \left( \frac{1}{(n - 2[n \alpha]) h_n } \sum\limits_{i = [n \alpha] +1}^{n - [n \alpha]} K \left(\frac{X_{(i)} - x_0}{h_n} \right) \left( g(X_{(i)}) - g(x_0) \right) \right) \\
& = \frac{1}{( n - 2[n \alpha]) h_n} \sum\limits_{i = [n \alpha] +1}^{n - [n \alpha]} \int K \left( \frac{u-x_0}{h_n} \right) \left( g(u)-g(x_0)\right) f_{X_{(i)}}(u) du .
\end{split}
\end{equation*}
Using transformation $\displaystyle v = \frac{u-x_0}{h_n}$ and (A3) $\left(i.e., \displaystyle \int v K(v) dv =0\right)$, we have
\begin{equation*}
\begin{split}
& \int K \left( \frac{u-x_0}{h_n} \right) h_n \left( g(u)-g(x_0) \right) f_{X_{(i)}} (u) du \\
& = h_n \int K(v) \left( g(x_0 + v h_n) - g(x_0) \right) f_{X_{(i)}}(x_0 + v h_n) dv \\
& = h_n \int K(v) \left( h_n v g^{\prime}(x_0) + \frac{h_n^2 v^2}{2} g^{\prime \prime}(x_0) + o(h_n^2) \right) \left( f_{X_{(i)}} (x_0) + h_n v f^{\prime}_{X_{(i)}} (x_0) + o(h_n) \right) dv \\
& = h_n \int K(v) \left( h_n v g^{\prime}(x_0) f_{X_{(i)}}(x_0) +  \frac{h_n^2 v^2}{2} g^{\prime \prime} f_{X_{(i)}}(x_0) + h_n^2 v^2 g^{\prime} f^{\prime}_{X_{(i)}}(x_0) + o(h_n^2) \right) dv \\
& = \frac{1}{2} h_n^3 g^{\prime \prime}(x_0) f_{X_{(i)}} (x_0) \int v^2K(v) dv + h_n^2 g^{\prime}(x_0) f^{\prime}_{X_{(i)}} (x_0) \int v^2K(v) dv + o(h_n^3) \\
& = h_n^3 k_2 \left(\frac{1}{2} g^{\prime \prime}(x_0) f_{X_{(i)}} (x_0) + g^{\prime}(x_0) f^{\prime}_{X_{(i)}} (x_0) \right) + o(h_n^3)
\end{split}
\end{equation*}
for any arbitrary $x_0$, where $\displaystyle k_2 = \int v^2K(v) dv$. Hence, we have 
\begin{equation*}
E (\hat{m}_{1, n} (x_0)) = \frac{h_n^2 g^{\prime \prime} (x_0)\, k_2}{2 \left( n - 2[n \alpha]\right)} \sum\limits_{i = [n \alpha] +1}^{n - [n \alpha]} f_{X_{(i)}} (x_0) + \frac{h_n^2 g^{\prime} (x_0)\, k_2}{\left( n - 2[n \alpha]\right)} \sum\limits_{i = [n \alpha] +1}^{n - [n \alpha]} f^{\prime}_{X_{(i)}} (x_0) +o(h_n^2).
\end{equation*}
It completes the proof. 
\end{proof}

\begin{lemma}
\label{L3}
Under (A1)-(A5),
\begin{equation*}
var \left( \sqrt{n h_n } \hat{m}_{1, n} (x_0) \right) \longrightarrow 0 \text{ as } n \to \infty, 
\end{equation*} where $\hat{m}_{1, n} (x_{0})$ is same as defined in the statement of Lemma \ref{L2}. 
\end{lemma}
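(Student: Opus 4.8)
The plan is to reduce the statement to a single second-moment estimate, in exactly the manner of the proof of Lemma \ref{L1}, but exploiting that the extra factor $g(X_{(i)})-g(x_0)$ supplies an additional power of $h_n$ that is absent there. Writing $W_i = K\!\left((X_{(i)}-x_0)/h_n\right)\left(g(X_{(i)})-g(x_0)\right)$, so that $\hat{m}_{1,n}(x_0)=\frac{1}{(n-2[n\alpha])h_n}\sum_{i=[n\alpha]+1}^{n-[n\alpha]}W_i$, I would first note $\mathrm{var}\!\left(\sqrt{nh_n}\,\hat{m}_{1,n}(x_0)\right)=nh_n\,\mathrm{var}\!\left(\hat{m}_{1,n}(x_0)\right)$ and expand the variance of the sum into its diagonal (variance) part and its off-diagonal (covariance) part. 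Then, following the same reduction used in Lemma \ref{L1}, namely $\mathrm{var}(W_i)\le E[W_i^2]$ together with $|\mathrm{cov}(W_i,W_j)|\le\sqrt{\mathrm{var}(W_i)\mathrm{var}(W_j)}$, it suffices to control the leading quantity $\frac{nh_n}{(n-2[n\alpha])^2 h_n^2}\sum_{i=[n\alpha]+1}^{n-[n\alpha]}E[W_i^2]$.

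The heart of the computation is the single second moment. Writing $E[W_i^2]=\int K\!\left((u-x_0)/h_n\right)^2\left(g(u)-g(x_0)\right)^2 f_{X_{(i)}}(u)\,du$, substituting $z=(u-x_0)/h_n$, and Taylor-expanding $g(x_0+zh_n)-g(x_0)=zh_n g'(x_0)+O(h_n^2)$ under (A1) gives $\left(g(x_0+zh_n)-g(x_0)\right)^2=z^2h_n^2 g'(x_0)^2+O(h_n^3)$, whence $E[W_i^2]=h_n^3 g'(x_0)^2 f_{X_{(i)}}(x_0)\int z^2 K(z)^2\,dz+o(h_n^3)$, the passage to the limit being justified by dominated convergence using the boundedness of $f_{X_{(i)}}$ from (A2) and of $K$ from (A3). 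Summing and invoking $\frac{1}{n-2[n\alpha]}\sum_{i=[n\alpha]+1}^{n-[n\alpha]} f_{X_{(i)}}(x_0)\to t_\alpha(x_0)<\infty$ yields $\sum_i E[W_i^2]=O(nh_n^3)$. Plugging this in gives $\frac{nh_n}{(n-2[n\alpha])^2 h_n^2}\cdot O(nh_n^3)=O(h_n^2)\to 0$, which proves the assertion.

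The contrast with Lemma \ref{L1} explains why the $\sqrt{nh_n}$ scaling is harmless here: there $E[K_i^2]=O(h_n)$ only, whereas the two extra factors of $g(X_{(i)})-g(x_0)$ supply the decisive additional $h_n^2$, and it is precisely this gain that survives amplification by $nh_n$. The step I expect to be most delicate is controlling the off-diagonal covariance contribution in the presence of the dependence among the order statistics $X_{(i)}$, since the crude Cauchy--Schwarz bound alone is wasteful. The clean way to keep that term of the same vanishing order as the diagonal term is to combine it with the fact that $K$ has bounded support $[-\tau,\tau]$ and that $f_{X_{(i)}}(x_0)$ is sharply concentrated in $i$ near $nF_X(x_0)$, so that only $O(nh_n)$ of the summands are non-negligible; this concentration, rather than the pairwise bound in isolation, is what prevents the covariance part from dominating.
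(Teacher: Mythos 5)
Your diagonal computation is correct and clean: indeed $E[W_i^2]=O(h_n^3 f_{X_{(i)}}(x_0))$, and summing via Fact A gives a diagonal contribution of order $O(h_n^2)\to 0$. But the off-diagonal covariance term, which you yourself flag as the delicate step, is a genuine gap, and the fix you sketch does not close it. Count the terms: there are $O(n^2)$ pairs $(i,j)$, and Cauchy--Schwarz gives $|\mathrm{cov}(W_i,W_j)|\le\sqrt{E[W_i^2]E[W_j^2]}=O(h_n^3)$ per pair, so the off-diagonal block of $\mathrm{var}\bigl(\sum_i W_i\bigr)$ is $O(n^2h_n^3)$; after multiplying by the prefactor $nh_n/\bigl((n-2[n\alpha])^2h_n^2\bigr)\asymp 1/(nh_n)$ this is $O(nh_n^2)$, which under (A4) with $h_n\asymp n^{-1/3}$ diverges like $n^{1/3}$. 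Your proposed remedy --- that only $O(nh_n)$ indices are effective because $f_{X_{(i)}}(x_0)$ is concentrated in $i$ --- improves this only to $O\bigl((nh_n)^2 h_n^3/(nh_n)\bigr)=O(nh_n^3)$, which for $h_n\asymp n^{-1/3}$ is $O(1)$, still not $o(1)$. So under the paper's bandwidth assumption your argument does not yield the conclusion; to make a moment-based proof work one would need actual cancellation in $\mathrm{cov}(W_i,W_j)=E[W_iW_j]-E[W_i]E[W_j]$, i.e.\ estimates on the joint densities of pairs of order statistics, and that (unexecuted) analysis is precisely the hard part. Moreover, the concentration claim itself is asserted, not proved.

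The paper avoids this dependence problem entirely by never decomposing the variance. It bounds the random variable pathwise: using (A3)(c) to get $h_n^{-1}K(\cdot/h_n)\le M$, a first-order Taylor expansion of $g$ (so $|g(X_{(i)})-g(x_0)|\le M_1|X_{(i)}-x_0|$ with $M_1$ bounding $g'$), and the spacing property of order statistics to control $|X_{(i)}-x_0|$ on the relevant terms, it shows $\bigl|\sqrt{nh_n}\,\hat{m}_{1,n}(x_0)\bigr|\le \sqrt{h_n}\,MM_1\log n/\sqrt{n}\to 0$ almost surely, hence $\bigl(\sqrt{nh_n}\,\hat{m}_{1,n}(x_0)\bigr)^2\to 0$ a.s., and then concludes $E\bigl[\bigl(\sqrt{nh_n}\,\hat{m}_{1,n}(x_0)\bigr)^2\bigr]\to 0$ (hence the variance) by dominated convergence. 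The lesson is that an almost-sure bound plus uniform integrability sidesteps the covariance structure of the $W_i$ altogether, whereas a term-by-term second-moment expansion forces you to confront it; if you want to salvage your route, you should either restrict to $h_n=o(n^{-1/3})$ or switch to the paper's pathwise argument.
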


\begin{proof}
To prove the assertion of this lemma, we find an upper bound of $\sqrt{n h_n} \hat{m}_{1, n} (x_0)$ as follows:
\begin{equation*}
\begin{split}
\left| \sqrt{n h_n} \hat{m}_{1, n} (x_0) \right| & = \left| \frac{\sqrt{n h_n} }{ (n - 2[n \alpha]) h_n} \sum\limits_{i = [n \alpha] +1}^{ n - [n \alpha]} K \left( \frac{X_{(i)} - x_0}{h_n} \right) \left( g(X_{(i)}) - g(x_0) \right) \right| \\
& \leqslant \frac{\sqrt{n}}{(n - 2[n \alpha]) \sqrt{h_n}} \sum\limits_{i = [n \alpha] +1}^{ n - [n \alpha]} \left| K \left( \frac{X_{(i)} -x_0}{h_n} \right) \right| \left| \left( g (X_{(i)}) - g(x_0) \right) \right| \\
& = \frac{\sqrt{n}}{ (n - 2[n \alpha]) \sqrt{h_n}} \sum\limits_{i = [n \alpha] +1}^{ n - [n \alpha]} M h_n \left| X_{(i)} - x_0 \right| \left|  g^{\prime} (\xi) \right| ,
\end{split}
\end{equation*}
using first order Tailor series expansion of $g(\cdot)$, where $\xi$ lies between $X_{(i)}$ and $x_0$, and $M$ is an arbitrary constant such that $h_{n}^{-1} k(./h_{n})\leq M$ (see assumption (A3)). Thus we have,
\begin{equation*}
\left| \sqrt{n h_n} \hat{m}_{1, n} (x_0) \right| \leqslant \frac{\sqrt{n}}{(n - 2[n \alpha]) \sqrt{h_n}} \sum\limits_{i = [n \alpha] +1}^{ n - [n \alpha]} M h_n M_1 \frac{\log n}{n} \leqslant \sqrt{h_{n}} M M_1 \frac{\log n}{\sqrt{n}} ,
\end{equation*}
using spacing property of order statistics (see \cite{devroye1981laws}), where $M_1$ is an upper bound for $g^{\prime}$ as $g^{'}$ is bounded, which follows from (A1). It leads to $\displaystyle \sqrt{n h_n}  \hat{m}_{1, n} (x_0) \xrightarrow{\makebox[.5cm]{a.s}} 0$ as $n \to \infty $, which implies that $\displaystyle \left( \sqrt{n h_n}  \hat{m}_{1, n} (x_0)\right)^2 \xrightarrow{\makebox[.5cm]{a.s}} 0$ as $n \to \infty $. Using similar arguments, one can establish that $\displaystyle \left( \sqrt{n h_{n}} \hat{m}_{1, n} (x_0)\right)^2 $ is uniformly bounded in probability. Hence by Dominated Convergence Theorem, $\displaystyle E \left[ \left(\sqrt{n h_{n}} \hat{m}_{1, n} (x_0)\right)^2 \right] \to 0$ as $n \to \infty $, which implies that $var \left( \sqrt{n h_n } \hat{m}_{1, n} (x_0) \right) \to 0 \text{ as } n \to \infty$.

\end{proof}

\begin{lemma}
\label{L4}
Let
\begin{equation}
\hat{m}_{2, n} (x_0) = \frac{1}{ (n - 2[n \alpha]) h_n} \sum\limits_{i = [n \alpha] +1}^{ n - [n \alpha]} K \left(\frac{X_{(i)} -x_0}{h_n} \right) e_{[i]},
\end{equation}
where $K$ is the kernel function, $X_{(i)}$ is the $i$-th order statistic and $e_{[i]}$ denotes the error corresponding to $X_{(i)}$. Then under (A1)-(A5),
\begin{equation*}
E \left( \sqrt{n h_n} \hat{m}_{2, n} (x_0) \right) = 0 \text{  for any arbitrary $x_0$ and for all $n$}.
\end{equation*}
\end{lemma}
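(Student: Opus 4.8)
The plan is to reduce the claim to a single-summand statement and then kill that summand by conditioning. Since $\sqrt{nh_n}\big/\{(n-2[n\alpha])h_n\}$ is a deterministic constant, linearity of expectation gives
\[
E\left(\sqrt{nh_n}\,\hat{m}_{2,n}(x_0)\right) = \frac{\sqrt{nh_n}}{(n-2[n\alpha])h_n}\sum_{i=[n\alpha]+1}^{n-[n\alpha]} E\left[K\left(\frac{X_{(i)}-x_0}{h_n}\right)e_{[i]}\right],
\]
so it suffices to show $E\big[K((X_{(i)}-x_0)/h_n)\,e_{[i]}\big]=0$ for each index $i$ in the trimmed range.

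First I would condition on the unordered covariate sample $\mathbf{X}=(X_1,\ldots,X_n)$. Given $\mathbf{X}$, the order statistics $X_{(1)},\ldots,X_{(n)}$ and the sorting permutation $\pi$ (with $X_{(i)}=X_{\pi(i)}$) are completely determined, so the kernel factor is $\sigma(\mathbf{X})$-measurable and pulls out of the conditional expectation, while the concomitant error is $e_{[i]}=e_{\pi(i)}$, the error attached to the observation of rank $i$. Hence
\[
E\left[K\left(\frac{X_{(i)}-x_0}{h_n}\right)e_{[i]}\;\Big|\;\mathbf{X}\right] = K\left(\frac{X_{(i)}-x_0}{h_n}\right)\,E\left[e_{\pi(i)}\;\big|\;\mathbf{X}\right].
\]
Next I would invoke the model structure: because the pairs $(X_j,e_j)$ are i.i.d.\ (A5(i)) with error density symmetric about $0$ and a finite first moment (A5, A6), each error is conditionally mean zero given its own covariate, and hence given the whole vector, $E[e_{\pi(i)}\,|\,\mathbf{X}]=E[e_{\pi(i)}\,|\,X_{\pi(i)}]=0$. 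Substituting back yields $E[K((X_{(i)}-x_0)/h_n)\,e_{[i]}\mid\mathbf{X}]=0$ almost surely, and the tower property gives $E[K((X_{(i)}-x_0)/h_n)\,e_{[i]}]=0$. Summing over $i$ and reinstating the constant prefactor proves the assertion for every $n$ and every fixed $x_0$.

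The only delicate point is the conditional-mean-zero property of the concomitants: one must verify that conditioning on the ordering does not disturb the vanishing conditional mean of the attached error. This is handled cleanly by conditioning on the full covariate vector rather than on the order statistics alone, since then the rank-to-index assignment $\pi$ is deterministic and $e_{[i]}$ reduces to a single original error whose conditional mean is zero; equivalently, it follows from the standard fact in the theory of concomitants of order statistics that, given $X_{(i)}=x$, the concomitant $e_{[i]}$ inherits the conditional law of $e$ given $X=x$, which by symmetry has mean zero.
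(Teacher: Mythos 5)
Your proof is correct and follows essentially the same route as the paper's: linearity of expectation reduces the claim to showing $E\left[K\left((X_{(i)}-x_0)/h_n\right)e_{[i]}\right]=0$ for each trimmed index, which the paper establishes by the tower property using $E\left(e_{[i]}\mid X_{(i)}\right)=0$. Your only refinement is conditioning on the full covariate vector $\mathbf{X}$ (so that the sorting permutation is deterministic and $e_{[i]}$ reduces to a single original error) rather than on $X_{(i)}$ alone, which rigorously justifies the concomitant fact that the paper simply asserts from (A5).
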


\begin{proof}
Assumption (A5) implies that $E(e_{[i]} | X_{(i)}) = 0$, which further infers that 
\begin{equation*}
E\left( K\left( \frac{X_{(i)} - x_0}{h_n} \right) e_{[i]} \right) = E\left( K\left( \frac{X_{(i)} - x_0}{h_n} \right) E\left( e_{[i]} \big| X_{(i)} \right) \right) = 0,
\end{equation*}
for each $x_0$ and $i=1,\ldots ,n$. Using this fact, we now have 
\begin{equation}
E \left( \sqrt{n h_n} \hat{m}_{2, n} (x_0) \right) = \frac{\sqrt{n h_n}}{ (n - 2[n \alpha]) h_n} \sum\limits_{i = [n \alpha] +1}^{ n - [n \alpha]} E \left( K \left(\frac{X_{(i)} -x_0}{h_n} \right) e_{[i]}\right) = 0.
\end{equation}
It completes the proof.
\end{proof}

\begin{lemma}
\label{L5}
Under (A1)-(A5),
\begin{equation*}
var \left( \sqrt{n h_n}\, \, \hat{m}_{2, n} (x_0) \right) \longrightarrow \frac{\sigma^2 (x_0) t_{\alpha} (x_0) \int \{K(z)\}^2 dz}{(1-2\alpha)} \quad \text{as } n\to \infty
\end{equation*}
for any arbitrary $x_0$, where $\hat{m}_{2, n} (x_{0})$ is same as defined in the statement of Lemma \ref{L4}. Here  
$\displaystyle \sigma^2 (x_0) = E\left(e^{2} | X = x_0 \right)$ and $\displaystyle t_{\alpha} (x_0) = \lim_{n\to \infty} \frac{1}{(n - 2[n \alpha])} \sum\limits_{i = [n \alpha] +1}^{ n - [n \alpha]} f_{X_{(i)}} (x_0)$.
\end{lemma}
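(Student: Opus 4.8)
The plan is to build on Lemma~\ref{L4}: since $E(\sqrt{nh_n}\,\hat{m}_{2,n}(x_0))=0$, the variance equals the second moment $E[(\sqrt{nh_n}\,\hat{m}_{2,n}(x_0))^2]$. Writing $K_i = K((X_{(i)}-x_0)/h_n)$, I would expand the square of the inner sum into a diagonal part $\sum_i K_i^2 e_{[i]}^2$ and an off-diagonal part $\sum_{i\ne j}K_iK_j e_{[i]}e_{[j]}$, carrying the prefactor $\frac{n}{(n-2[n\alpha])^2 h_n}$, and treat the two parts separately.

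The first key step is to show that the off-diagonal terms contribute nothing. I would condition on the entire vector $(X_1,\ldots,X_n)$, which simultaneously fixes the order statistics $X_{(i)}$ and the labelling of the concomitants $e_{[i]}$. Because the pairs $(X_k,e_k)$ are i.i.d., the errors are conditionally independent given all the $X$'s, and by (A5) each satisfies $E(e_{[i]}\,|\,X_{(i)})=0$. Hence for $i\ne j$ one gets $E(K_iK_je_{[i]}e_{[j]}\,|\,(X_1,\dots,X_n)) = K_iK_j E(e_{[i]}|X_{(i)})E(e_{[j]}|X_{(j)})=0$, so every cross term vanishes after taking the outer expectation.

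For the diagonal part I would use $E(e_{[i]}^2\,|\,X_{(i)})=\sigma^2(X_{(i)})$ from (A5)(iii) to write $E[K_i^2 e_{[i]}^2] = \int K((u-x_0)/h_n)^2 \sigma^2(u) f_{X_{(i)}}(u)\,du$, and then substitute $z=(u-x_0)/h_n$ to pull out a factor $h_n$, obtaining $h_n\int K(z)^2\sigma^2(x_0+h_nz)f_{X_{(i)}}(x_0+h_nz)\,dz$. Collecting the prefactors, the second moment becomes $\frac{n}{n-2[n\alpha]}\cdot\frac{1}{n-2[n\alpha]}\sum_i\int K(z)^2\sigma^2(x_0+h_nz)f_{X_{(i)}}(x_0+h_nz)\,dz$. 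Letting $n\to\infty$, I would use $h_n\to0$ with the continuity and boundedness afforded by (A1)-(A3) to replace, via dominated convergence, $\sigma^2(x_0+h_nz)f_{X_{(i)}}(x_0+h_nz)$ by $\sigma^2(x_0)f_{X_{(i)}}(x_0)$ and factor out $\int\{K(z)\}^2\,dz$; recognizing $\frac{n}{n-2[n\alpha]}\to(1-2\alpha)^{-1}$ and $\frac{1}{n-2[n\alpha]}\sum_i f_{X_{(i)}}(x_0)\to t_\alpha(x_0)$ then delivers the claimed limit.

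The main obstacle I anticipate is not the algebra but the double limit in the diagonal term: the number of summands $n-2[n\alpha]$ grows with $n$ while each integrand simultaneously depends on $n$ through $h_n$ and through the order-statistic density $f_{X_{(i)}}$. Interchanging the $n\to\infty$ limit with the average over $i$ therefore requires uniform (in $i$) control of the inner integrals, so that the average of the $n$-dependent integrals converges to the average of their pointwise limits; I would secure this through the uniform boundedness of $f_{X_{(i)}}$ and $\sigma^2$ near $x_0$ together with the very definition of $t_\alpha(x_0)$ as the limiting average. A secondary point requiring care is making the concomitant conditional-independence argument fully rigorous, namely that conditioning on $(X_1,\dots,X_n)$ legitimately decouples $e_{[i]}$ and $e_{[j]}$.
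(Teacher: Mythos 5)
Your proposal follows essentially the same route as the paper's proof: variance equals second moment via Lemma \ref{L4}, the squared sum splits into a diagonal part treated through $E\bigl(e_{[i]}^2 \,\big|\, X_{(i)}\bigr)=\sigma^2(X_{(i)})$ and the substitution $z=(u-x_0)/h_n$, the cross terms vanish by conditional mean zero, and the prefactor limits $n/(n-2[n\alpha])\to(1-2\alpha)^{-1}$ together with the definition of $t_\alpha(x_0)$ deliver the stated limit. The only differences are cosmetic and in your favour: where the paper dismisses the off-diagonal terms ``without loss of generality'' by passing to i.i.d.\ pairs, you condition on the full vector $(X_1,\ldots,X_n)$ to decouple the concomitants, which is a more rigorous justification of the same fact, and you use dominated convergence with continuity where the paper uses a Taylor expansion.
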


\begin{proof}
First note that $\displaystyle var \left( \sqrt{n h_n}\, \, \hat{m}_{2,n} (x_0) \right) = E \left[ \left( \sqrt{n h_n}\, \, \hat{m}_{2,n} (x_0) \right)^2 \right]$, which follows from the assertion of Lemma \ref{L4}. We now have

\vspace{0.25in}

\begin{equation}
\begin{split}
& var \left( \sqrt{n h_n} \hat{m}_{2,n} (x_0) \right) = \frac{n h_n}{ (n - 2[n \alpha])^2 h_n^2} E\left[ \left( \sum\limits_{i = [n \alpha] +1}^{ n - [n \alpha]} K \left(\frac{X_{(i)} -x_0}{h_n} \right) e_{[i]} \right)^2 \right] \\
&= \frac{n h_n}{ (n - 2[n \alpha])^2 h_n^2} E \Bigg[ \sum\limits_{i = [n \alpha] +1}^{ n - [n \alpha]} K \left(\frac{X_{(i)} -x_0}{h_n} \right)^2 e_{[i]}^2
+ \sum\limits_{\substack{i,j = [n \alpha] +1 \\ i\neq j}}^{ n - [n \alpha]} K \left(\frac{X_{(i)} -x_0}{h_n} \right) e_{[i]} K \left(\frac{X_{(j)} -x_0}{h_n} \right) e_{[j]} \Bigg].
\end{split}
\end{equation}

\vspace{0.25in}

We now consider
\begin{equation*}
\begin{split}
E \left( K \left(\frac{X_{(i)} -x_0}{h_n} \right)^2 (e_{[i]})^2 \right) &= E\left( K\left( \frac{X_{(i)} - x_0}{h_n} \right)^2 E\left( e_{[i]}^2 \, \big|\, X_{(i)} \right) \right) \\
&= E\left( K\left( \frac{X_{(i)} - x_0}{h_n} \right)^2 \sigma^2 (X_{(i)}) \right) \\
&= \int K\left( \frac{u - x_0}{h_n} \right)^2 \sigma^2 (u) f_{X_{(i)}} (u) du .
\end{split}
\end{equation*}
Substituting $\displaystyle \frac{u - x_0}{h_n} = z$, and applying Taylor expansion in the neighbourhood of $x_0$, we have
\begin{equation}
\begin{split}
\label{L5e2}
\int & K\left( \frac{u - x_0}{h_n} \right)^2 \sigma^2 (u) f_{X_{(i)}} (u) du = h_n \int \{K\left( z \right)\}^2 \sigma^2 (x_0 + h_n z) f_{X_{(i)}} (x_0 + h_n z) dz \\
&= h_n \int \{K\left( z \right)\}^2 \left(\sigma^2(x_0)+ h_n z(\sigma^2)^{\prime} (x_0) + o(h_n)\right) \left( f_{X_{(i)}} (x_0) + h_n z f^{\prime}_{X_{(i)}} (x_0) + o(h_n) \right) dz \\
&= h_n \sigma^2 (x_0) f_{X_{(i)}} (x_0) \int \{K(z)\}^2 dz + o(h_n).
\end{split}
\end{equation}
Next for the product term, without loss of generality, one can consider the summand based on the sample $(X_1,\ldots ,X_{n-2[n\alpha]})$ and the corresponding errors $(e_1,\ldots ,e_{n-2[n\alpha]})$. Then the expectation $E \left[ K \left(\frac{X_i -x_0}{h_n}\right) e_i K \left(\frac{X_j -x_0}{h_n}\right) e_j \right] =0$, for each pair $\{(i,j)| i,j=1,\ldots ,n-2[n\alpha] ; i\neq j \}$, using the fact that $E(e_i|X_i) =0$ for all $i =1,2\ldots ,n-2[n\alpha]$. Hence, we have 
\begin{equation}
\label{L5e3}
E\left[\sum\limits_{\substack{i,j = [n \alpha] +1 \\ i\neq j}}^{ n - [n \alpha]} K \left(\frac{X_{(i)} -x_0}{h_n} \right) e_{[i]} K \left(\frac{X_{(j)} -x_0}{h_n} \right) e_{[j]}\right] =0
\end{equation}
Finally, combining (\ref{L5e2}) and (\ref{L5e3}), we have
\begin{equation*}
\begin{split}
var \left( \sqrt{n h_n}\, \, \hat{m}_{2, n} (x_0) \right) &= \frac{n h_n}{ (n - 2[n \alpha])^2 h_n^2} \sum\limits_{i = [n \alpha] +1}^{ n - [n \alpha]} h_n \sigma^2 (x_0) f_{X_{(i)}} (x_0) \int\{K(z)\}^2 dz + o(h_n) \\
&= \int\{K(z)\}^2 dz \frac{n \sigma^2 (x_0)}{(n - 2[n \alpha])^2} \sum\limits_{i = [n \alpha] +1}^{ n - [n \alpha]} f_{X_{(i)}} (x_0) + o(1).
\end{split}
\end{equation*}
Note that since $\displaystyle \frac{n}{(n - 2[n \alpha])} \to \frac{1}{(1-2\alpha)}$ as $n\to \infty$, we have
\begin{equation}
var \left( \sqrt{n h_n}\, \, \hat{m}_{2, n} (x_0) \right) \longrightarrow \frac{\sigma^2 (x_0) t_{\alpha} (x_0) \int \{K(z)\}^2 dz}{(1-2\alpha)} \quad \text{as } n\to \infty
\end{equation}
for any arbitrary $x_0$, where $\displaystyle t_{\alpha} (x_0) = \lim_{n\to \infty} \frac{1}{(n - 2[n \alpha])} \sum\limits_{i = [n \alpha] +1}^{ n - [n \alpha]} f_{X_{(i)}} (x_0)$. It completes the proof.
\end{proof}

\begin{lemma}
\label{L6}
Under (A2)-(A6), $\displaystyle \sqrt{nh_n} \left[\hat{m}_{2,n}(x_0) -E(\hat{m}_{2,n}(x_0)) \right] \xrightarrow{\makebox[5mm]{d}} \mathcal{N} (0,V)$, where $\hat{m}_{2, n} (x_{0})$ is same as defined in the statement of Lemma \ref{L4} and $\displaystyle V = \frac{\sigma^2 (x_0) }{(1-2\alpha) t_{\alpha} (x_0)} \int \{K(z)\}^2 dz$. Especially, since $E(\hat{m}_{2,n}(x_0)) = 0$ (see Lemma \ref{L4}), we have $\sqrt{nh_n} \hat{m}_{2,n}(x_0) \xrightarrow{\makebox[5mm]{d}} \mathcal{N} (0,V)$.
\end{lemma}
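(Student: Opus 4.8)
The plan is to reduce the order-statistic sum to a sum of row-wise independent summands and then apply a Lyapunov central limit theorem for triangular arrays. The starting observation is that $i\mapsto (X_{(i)},Y_{[i]})$ is merely a reindexing of the original sample, so the trimmed sum can be rewritten over the original i.i.d.\ pairs together with a trimming indicator:
\begin{equation*}
\sqrt{nh_n}\,\hat{m}_{2,n}(x_0) = \frac{\sqrt{nh_n}}{(n-2[n\alpha])h_n} \sum_{i=1}^{n} K\!\left(\frac{X_i-x_0}{h_n}\right) e_i \, \mathbf{1}\!\left\{X_{([n\alpha]+1)} \leq X_i \leq X_{(n-[n\alpha])}\right\}.
\end{equation*}
Denote the random indicator above by $\mathbf{1}_{n,i}$. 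It depends on the two trimming order statistics, which is the \emph{sole} source of dependence; everything else is built from the i.i.d.\ pairs $(X_i,e_i)$.

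First I would remove this dependence. Writing $q_\alpha$ and $q_{1-\alpha}$ for the population $\alpha$- and $(1-\alpha)$-quantiles of $F_X$, the trimming order statistics $X_{([n\alpha]+1)}$ and $X_{(n-[n\alpha])}$ converge almost surely to $q_\alpha$ and $q_{1-\alpha}$. I would therefore replace $\mathbf{1}_{n,i}$ by the deterministic indicator $\tilde{\mathbf{1}}_{n,i}=\mathbf{1}\{q_\alpha \le X_i \le q_{1-\alpha}\}$ and show that the normalized sum of the discrepancy terms $K((X_i-x_0)/h_n)\,e_i\,(\mathbf{1}_{n,i}-\tilde{\mathbf{1}}_{n,i})$ is $o_p(1)$. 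For a fixed interior $x_0$ this is transparent: since $h_n\to 0$, the kernel support $[x_0-\tau h_n, x_0+\tau h_n]$ eventually lies strictly inside $(q_\alpha,q_{1-\alpha})$, the two indicators then agree on it with probability tending to one, and the discrepancy vanishes. The genuinely delicate case is when $x_0$ sits near a trimming quantile, where one must bound the number of observations whose membership flips, using the fluctuation of the empirical quantiles about $q_\alpha$ and $q_{1-\alpha}$.

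After this reduction, $S_n:=\frac{\sqrt{nh_n}}{(n-2[n\alpha])h_n}\sum_{i=1}^{n} W_{n,i}$ with $W_{n,i}=K((X_i-x_0)/h_n)\,e_i\,\tilde{\mathbf{1}}_{n,i}$ is a normalized sum of row-wise i.i.d.\ summands. Each is mean zero because $E(e_i\mid X_i)=0$ by (A5) and $\tilde{\mathbf{1}}_{n,i}$ is a function of $X_i$ alone, consistent with Lemma \ref{L4}; and the variance of $S_n$ converges to the finite positive limit $V$, which I would take from Lemma \ref{L5} transcribed to the deterministic-trimming version. It then remains to verify Lyapunov's condition. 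A change of variables $z=(u-x_0)/h_n$ gives $E|W_{n,1}|^{2+\delta}=O(h_n)$ and $\mathrm{var}(W_{n,1})=O(h_n)$, so, for a row-wise i.i.d.\ array, the Lyapunov ratio behaves like
\begin{equation*}
\frac{n\,E|W_{n,1}|^{2+\delta}}{\big(n\,\mathrm{var}(W_{n,1})\big)^{(2+\delta)/2}} = O\!\left( (n h_n)^{-\delta/2}\right) \longrightarrow 0,
\end{equation*}
where the finiteness of $E|e_1|^{2+\delta}$ is supplied by (A6) and $nh_n\to\infty$ drives the ratio to zero.

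The Lyapunov central limit theorem then yields $S_n\xrightarrow{d}\mathcal{N}(0,V)$, and combining this with the $o_p(1)$ discrepancy (Slutsky) and $E(\hat{m}_{2,n}(x_0))=0$ from Lemma \ref{L4} gives $\sqrt{nh_n}\,\hat{m}_{2,n}(x_0)\xrightarrow{d}\mathcal{N}(0,V)$. The hard part will be the quantile-replacement step: disentangling the order-statistic dependence hidden in $\mathbf{1}_{n,i}$ is precisely the technicality flagged in the introduction, whereas the Lyapunov verification is routine once independence has been restored.
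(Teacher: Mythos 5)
Your proposal is correct in substance but takes a genuinely different route from the paper's proof. The paper writes $\hat{m}_{2,n}(x_0) = \frac{n}{n-2[n\alpha]}\left[T_{n,1}-T_{n,2}-T_{n,3}\right]$, where $T_{n,1}$ is the full untrimmed sum and $T_{n,2},T_{n,3}$ are the sums over the lower and upper $[n\alpha]$ order statistics; it applies the Lyapunov CLT to $T_{n,1}$ (legitimate, since after reordering it is an i.i.d.\ sum) and then asserts that $T_{n,2}$ and $T_{n,3}$ converge to Gaussian limits ``using a similar argument,'' finally combining the three limits. You instead rewrite the trimmed sum over the original pairs with a random trimming indicator, replace the empirical trimming thresholds by the population quantiles $q_\alpha,q_{1-\alpha}$ at an $o_p(1)$ cost, and run a single Lyapunov CLT for a row-wise i.i.d.\ triangular array. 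Your route buys rigor precisely where the paper is weakest: the tail sums $T_{n,2},T_{n,3}$ are sums over dependent extreme order statistics, so an i.i.d.-based Lyapunov argument does not literally apply to them, and the paper's final recombination of three non-independent Gaussian limits is never justified by any joint-convergence (Cram\'er--Wold) step. Your localization argument exposes what is actually happening at an interior point $x_0$ with $\alpha < F_X(x_0) < 1-\alpha$: since the kernel window $[x_0-\tau h_n, x_0+\tau h_n]$ eventually lies strictly between the quantiles and the empirical trimming thresholds converge almost surely, with probability tending to one the trimming removes no observation in the kernel window, so the tail terms are eventually exactly zero and only $T_{n,1}$ matters. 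Two caveats. First, like the paper, your argument requires this interiority condition; you flag the case of $x_0$ near a trimming quantile as delicate but do not resolve it (neither does the paper, and for $x_0$ outside the quantile range the statement degenerates since $t_\alpha(x_0)=0$). Second, what your variance computation (and Lemma \ref{L5}) actually delivers is $\sigma^2(x_0)\,t_\alpha(x_0)\int \{K(z)\}^2 dz/(1-2\alpha)$, which for interior $x_0$ equals $\sigma^2(x_0) f_X(x_0)\int \{K(z)\}^2 dz/(1-2\alpha)^2$; the $V$ displayed in the lemma has $t_\alpha(x_0)$ in the \emph{denominator}, which is the asymptotic variance of the ratio $\hat{m}_{2,n}(x_0)/\hat{f}_{n,\alpha}(x_0)$ used in Theorem \ref{T1}. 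That mismatch is an inconsistency internal to the paper's statements, not a defect introduced by your argument, but you should state explicitly which of the two quantities your limit theorem produces.
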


\begin{proof}
Note that 
\begin{equation}
\begin{split}
\label{L6e1}
\hat{m}_{2, n} (x_0) &= \frac{1}{ (n - 2[n \alpha]) h_n} \sum\limits_{i = [n \alpha] +1}^{ n - [n \alpha]} K \left(\frac{X_{(i)} -x_0}{h_n} \right) e_{[i]} \\
&= \frac{n}{(n - 2[n \alpha])} \Bigg[ \frac{1}{nh_n} \sum_{i=1}^{n} K \left(\frac{X_{(i)} -x_0}{h_n} \right) e_{[i]} \\
&- \frac{1}{nh_n} \sum_{i=1}^{[n\alpha]} K \left(\frac{X_{(i)} -x_0}{h_n} \right) e_{[i]} \\
&- \frac{1}{nh_n} \sum_{i=n-[n\alpha]+1}^{n} K \left(\frac{X_{(i)} -x_0}{h_n} \right) e_{[i]} \Bigg]\\
&:= \frac{n}{(n - 2[n \alpha])} [T_{n, 1} - T_{n, 2} - T_{n, 3}]
\end{split}
\end{equation}
We now investigate the distributional properties of $T_{n, 1}$, $T_{n, 2}$ and $T_{n, 3}$ in (\ref{L6e1}) separately.  
We now consider $T_{n, 1}$, which is the following. 
\begin{equation*}
T_{n, 1} = \frac{1}{nh_n} \sum_{i=1}^{n} K \left(\frac{X_{(i)} -x_0}{h_n} \right) e_{[i]} = \frac{1}{nh_n} \sum_{i=1}^{n} K \left(\frac{X_i -x_0}{h_n} \right) e_i .
\end{equation*}
The last equality follows from the definitions of $X_{(i)}$ and $e_{[i]}$.

Note that for $i = 1,\ldots, n$, $K\left(\frac{X_i -x_0}{h_n} \right) e_i$ is a sequence of independent random variables, with zero mean and variances given by
\begin{equation*}
\begin{split}
var \left( K\left(\frac{X_i -x_0}{h_n}\right)e_i \right) &= E\left[ \left\{K\left(\frac{X_i -x_0}{h_n}\right)\right\}^2 e_i^2 \right] - E\left[ K\left(\frac{X_i -x_0}{h_n}\right) e_i \right]^2 \\
&= E\left[ K\left(\frac{X_i -x_0}{h_n}\right)^2 e_i^2 \right] = E\left[ K\left(\frac{X_i -x_0}{h_n}\right)^2 E[e_i^2 | X_i] \right] \\
&= E\left[ K\left(\frac{X_i -x_0}{h_n}\right)^2 \sigma^2 (X_i) \right] = \int K\left(\frac{u -x_0}{h_n}\right)^2 \sigma^2 (u) f_X (u) du.
\end{split}
\end{equation*}
Now, changing the variable to $z = \frac{u-x_0}{h_n}$, we have
\begin{equation*}
\begin{split}
var \left( K\left(\frac{X_i -x_0}{h_n}\right)e_i \right) &= h_n \int\{K(z)\}^2 \sigma^2 (x_0 +h_n z) f_X (x_0 +h_n z) dz = h_n c(x_0)+o(h_n),
\end{split}
\end{equation*}
where $c(x_0) = \sigma^2(x_0) f_X (x_0) \int \{K(z)\}^2 dz <\infty$. The sum of variances is then given by 
\begin{equation*}
s_n = \sum_{i=1}^{n} var \left( K\left(\frac{X_i -x_0}{h_n}\right)e_i \right) = \sum_{i=1}^{n} h_n c(x_0) = nh_n c(x_0) +o(h_n).
\end{equation*}
Now, using condition (A6) in the Lyapunov condition for the sequence of random variables $K\left(\frac{X_i -x_0}{h_n} \right) e_i$ and for some $\delta >0$ we have,
\begin{equation*}
\frac{1}{s_n^{2+\delta}} \sum_{i=1}^{n} E\left[\left|K\left(\frac{X_i -x_0}{h_n}\right)e_i \right|^{2+\delta} \right] = \frac{1}{s_n^{2+\delta}} \sum_{i=1}^{n} c_1 E[|e_i^{2+\delta}|X_i] = \frac{n}{s_n^{2+\delta}} c_2,
\end{equation*}
using boundedness of $K$ where $c_1$ and $c_2$ are constants. Note that, $\frac{n}{(nh_n)^{2+\delta}} \to 0$ as $n\to \infty$, using $nh_n^2 \to 0$ as $n\to \infty$ (see (A4)). Hence, this sequence of random variables are satisfying the condition of Lyapunov CLT (see, e.g., \citeA{billingsley1995probability}, p.362) in view of (A3), (A4) and (A6). Hence, by Lyapunove CLT, we have  
\begin{equation}
\label{ft}
\sqrt{nh_n}\left(\frac{1}{nh_n} \sum_{i=1}^{n} K \left(\frac{X_{i} -x_0}{h_n} \right) e_{i}\right) = \sqrt{nh_{n}}\left(\frac{1}{nh_n} \sum_{i=1}^{n} K \left(\frac{X_{(i)} -x_0}{h_n} \right) e_{[i]}\right)
\end{equation} converges weakly to a Gaussian distribution. 

\noindent Now, $T_{n, 2}$ in (\ref{L6e1}), we have 
\begin{equation}
\frac{1}{nh_n} \sum_{i=1}^{[n\alpha]} K \left(\frac{X_{(i)} -x_0}{h_n} \right) e_{[i]} = \frac{[n\alpha]}{n} \left( \frac{1}{[n\alpha]h_n} \sum_{i=1}^{[n\alpha]} K \left(\frac{X_{(i)} -x_0}{h_n} \right) e_{[i]} \right).
\end{equation}
Since $\frac{[n\alpha]}{n} \to \alpha$ as $n\to \infty$ and $[n\alpha] \to \infty$ as $n\to \infty$, using a similar argument as for the first term we conclude that for $m_n =[n\alpha]$
\begin{equation}
\label{st}
T_{n, 2} = \sqrt{nh_n}\left(\frac{1}{m_{n}h_n} \sum_{i=1}^{m_n} K \left(\frac{X_{(i)} -x_0}{h_n} \right) e_{[i]}\right)
\end{equation} converges weakly to another Gaussian distribution. 

\noindent Finally, for $T_{n, 3}$ in (\ref{L6e1}), we have
\begin{equation}
\label{tt}
T_{n, 3} = \frac{1}{nh_n} \sum_{i=n-[n\alpha]+1}^{n} K \left(\frac{X_{(i)} -x_0}{h_n} \right) e_{[i]} = \frac{1}{nh_n} \sum_{i=1}^{n} K \left(\frac{X_{(i)} -x_0}{h_n} \right) e_{[i]} - \frac{1}{nh_n} \sum_{i=1}^{n-[n\alpha]} K \left(\frac{X_{(i)} -x_0}{h_n} \right) e_{[i]}.
\end{equation}
Using a similar arguments as the first and the second terms, it also converges weakly to a another Gaussian distribution. Therefore, combining the asymptotic distributions of the expressions of (\ref{ft}), (\ref{st}) and (\ref{tt}) leads to the asymptotic normality of $\hat{m}_{2, n} (x_{0})$, which completes the proof.

\end{proof}

\vspace{8mm}
\noindent {\bf Proof of Theorem \ref{T1}:}

\begin{proof}

The regression model defined in (\ref{model}) can be rewritten as
\begin{equation*}
\begin{split}
Y_i = g (X_i) + e_i = g(x_0) + (g(X_i) - g(x_0)) + e_i, \text{ for } i = 1,2,\ldots ,n \\
\text{OR} \quad Y_{[i]} = g(x_0) + (g(X_{(i)}) - g(x_0)) + e_{[i]}, \text{ for } i = 1,2,\ldots ,n ,
\end{split}
\end{equation*} 
with the ordered version of random variables. Here $X_{(i)}$ denotes the $i$-th order statistic of $\{X_1,\ldots ,X_n\}$, and $Y_{[i]}$ and $e_{[i]}$ are the corresponding response and error random variables (as defined in Section 2). Note that 
\begin{equation}
\hat{g}_{n,\alpha} (x_0) = g(x_0) + \frac{\hat{m}_{1, n} (x_{0})}{\hat{f}_{n, \alpha} (x_{0})} + \frac{\hat{m}_{2, n} (x_{0})}{\hat{f}_{n, \alpha} (x_{0})}, 
\end{equation} where $\hat{f}_{n, \alpha} (x_{0})$, $\hat{m}_{1, n} (x_{0})$ and $\hat{m}_{2, n} (x_{0})$ are same as defined in Lemmas \ref{L1}, \ref{L2} and \ref{L4}, respectively. 

Note that the assertions in Lemmas \ref{L1}, \ref{L4}, \ref{L5} and \ref{L6} imply that 
\begin{equation}
\sqrt{n h_n} \frac{\hat{m}_{2,n} (x_0)}{\hat{f}_{n, \alpha} (x_{0})}  \overset{d}{\longrightarrow} \mathcal{N} \left( 0, \frac{\sigma^2 (x_0)}{(1-2\alpha)t_{\alpha} (x_{0})} \int \{K(z)\}^2 dz \right).
\end{equation}

The above two facts along with an application of Slutsky's theorem (see, e.g., \citeA{serfling2009approximation}), one can conclude that $$\sqrt{nh_n} \left( \hat{g}_{n,\alpha} (x_{0}) - g(x_{0}) - h_n^2 k_2 \left( \frac{g^{\prime \prime} (x_{0})}{2(n-2[n\alpha])} \sum\limits_{i=[n \alpha] +1}^{n - [n \alpha]} f_{X_{(i)}} (x_{0}) + \frac{g^{\prime}(x_{0})}{(n-2[n\alpha])} \sum\limits_{i=[n \alpha] +1}^{n - [n \alpha]} f^{\prime}_{X_{(i)}} (x_{0}) \right) \right)$$ converges weakly to a Gaussian distribution with mean $= 0$ and variance $= V$, where $V$ is same as defined in the statement of Theorem \ref{T1}.    

\end{proof}


\subsection{APPENDIX B}

\noindent {\bf Asymptotic Efficiency Table:} Corresponding to Section \ref{AE}.

\begin{table}[H]
 \begin{center}
 
 \caption{Table showing Asymptotic Efficiency of $\hat{g}_{n,\alpha} (0.5)$ relative to $\hat{g}_{n, NW} (0.5)$ for different values of $\alpha$.}
 \label{table_ae}
 \begin{tabular}{c c c c c c}
 
 \hline
 \multicolumn{6}{c}{$X$ follows $Unif(0,1)$} \\ 
 \hline 
$\alpha$ = 0.05 & $\alpha$ = 0.10 & $\alpha$ = 0.20 & $\alpha$ = 0.30 & $\alpha$ = 0.40 & $\alpha$ = 0.45 \\

 0.9782609 & 0.9836066 & 0.9999998 & 0.9673888 & 0.9153650 & 0.5826260 \\

 \multicolumn{6}{c}{$X$ follows $Beta(2,2)$} \\ 

$\alpha$ = 0.05 & $\alpha$ = 0.10 & $\alpha$ = 0.20 & $\alpha$ = 0.30 & $\alpha$ = 0.40 & $\alpha$ = 0.45 \\

0.9782609 & 1.0000000 & 0.9999752 & 0.9950485 & 0.8399297 & 0.5005812 \\
\hline

\end{tabular}
\end{center}
\end{table}

\noindent {\bf Finite Sample Study Table:} Corresponding to Section \ref{FSS}.

\begin{table}[H]
 \begin{center}
 
 \caption{Table showing the finite sample Efficiency of $\hat{g}_{n,\alpha} (0.5)$ relative to $\hat{g}_{n,NW} (0.5)$ for Examples 1, 2, 3 and 4 in Section \ref{FSS}. Here $n = 50$ and $500$.}
 \label{table_fss}
 \begin{tabular}{c c c c c c}
 
 \hline
 \multicolumn{6}{c}{Example 1 with $n=50$} \\ 
 \hline 
 
$\alpha$ = 0.05 & $\alpha$ = 0.10 & $\alpha$ = 0.20 & $\alpha$ = 0.30 & $\alpha$ = 0.40 & $\alpha$ = 0.45 \\

 1.0274176 & 0.9659629 & 1.0141424 & 0.6229415 & 0.2438898 & 0.1900028 \\
 
 \multicolumn{6}{c}{Example 1 with $n=500$} \\ 

$\alpha$ = 0.05 & $\alpha$ = 0.10 & $\alpha$ = 0.20 & $\alpha$ = 0.30 & $\alpha$ = 0.40 & $\alpha$ = 0.45 \\

 1.0822063 & 1.0020056 & 1.0444929 & 1.0289262 & 0.4826861 & 0.2658498 \\

\multicolumn{6}{c}{Example 2 with $n=50$} \\ 

$\alpha$ = 0.05 & $\alpha$ = 0.10 & $\alpha$ = 0.20 & $\alpha$ = 0.30 & $\alpha$ = 0.40 & $\alpha$ = 0.45 \\

1.0484987 & 0.9574283 & 0.9581104 & 0.7012951 & 0.3207287 & 0.2309493 \\

 \multicolumn{6}{c}{Example 2 with $n=500$} \\ 

$\alpha$ = 0.05 & $\alpha$ = 0.10 & $\alpha$ = 0.20 & $\alpha$ = 0.30 & $\alpha$ = 0.40 & $\alpha$ = 0.45 \\

0.9379374 & 0.9479973 & 1.0495048 & 1.0015816 & 0.5939786 & 0.3355768 \\

 \multicolumn{6}{c}{Example 3 with $n=50$} \\ 

$\alpha$ = 0.05 & $\alpha$ = 0.10 & $\alpha$ = 0.20 & $\alpha$ = 0.30 & $\alpha$ = 0.40 & $\alpha$ = 0.45 \\

1.0300403 & 1.0345411 & 0.8658538 & 0.6145255 & 0.2826788 & 0.2157184 \\

 \multicolumn{6}{c}{Example 3 with $n=500$} \\ 

$\alpha$ = 0.05 & $\alpha$ = 0.10 & $\alpha$ = 0.20 & $\alpha$ = 0.30 & $\alpha$ = 0.40 & $\alpha$ = 0.45 \\

1.0728524 & 0.9986963 & 1.1596102 & 1.0221897 & 0.4859994 & 0.2524595 \\

 \multicolumn{6}{c}{Example 4 with $n=50$} \\

$\alpha$ = 0.05 & $\alpha$ = 0.10 & $\alpha$ = 0.20 & $\alpha$ = 0.30 & $\alpha$ = 0.40 & $\alpha$ = 0.45 \\

0.9551044 & 0.9540824 & 0.9748282 & 0.7162221 & 0.3683285 & 0.2436081 \\

 \multicolumn{6}{c}{Example 4 with $n=500$} \\

$\alpha$ = 0.05 & $\alpha$ = 0.10 & $\alpha$ = 0.20 & $\alpha$ = 0.30 & $\alpha$ = 0.40 & $\alpha$ = 0.45 \\

1.0058514 & 1.0806839 & 1.0892772 & 1.0036713 & 0.5958239 & 0.3385112 \\
\hline

\end{tabular}
\end{center}
\end{table}

\noindent {\bf Real Data Analysis Table:} Corresponding to Section \ref{rda}.

\begin{table}[H]
 \begin{center}
 
 \caption{Table showing Bootstrap Efficiency of $\hat{g}_{n,\alpha} (0.5)$ relative to $\hat{g}_{n, NW} (0.5)$ for three benchmark Real Data studied in Section \ref{rda}.}
 \label{table_rda}
 \begin{tabular}{c c c c c c}
 
 \hline
 \multicolumn{6}{c}{Data: Combined Cycle Power Plant Data Set} \\ 
 \hline 
$\alpha$ = 0.05 & $\alpha$ = 0.10 & $\alpha$ = 0.20 & $\alpha$ = 0.30 & $\alpha$ = 0.40 & $\alpha$ = 0.45 \\

 0.6501292 & 0.6595586 & 0.7475265 & 1.0328150 & 0.2786650 & 0.1804794 \\

 \multicolumn{6}{c}{Data: Parkinson's Telemonitoring Data Set} \\ 

$\alpha$ = 0.05 & $\alpha$ = 0.10 & $\alpha$ = 0.20 & $\alpha$ = 0.30 & $\alpha$ = 0.40 & $\alpha$ = 0.45 \\

0.9309987 & 1.0066488 & 0.9618025 & 0.6231914 & 0.2356732 & 0.1749779 \\

 \multicolumn{6}{c}{Data: Air Quality Data Set} \\ 

$\alpha$ = 0.05 & $\alpha$ = 0.10 & $\alpha$ = 0.20 & $\alpha$ = 0.30 & $\alpha$ = 0.40 & $\alpha$ = 0.45 \\

1.0120231 & 0.9598871 & 0.6772431 & 0.4064943 & 0.0923924 & 0.0 \\
\hline

\end{tabular}
\end{center}
\end{table}

\bibliographystyle{apacite}
\bibliography{trimmed_nw_est}

\begin{thebibliography}{}

\bibitem [\protect \citeauthoryear {%
Bickel%
}{%
Bickel%
}{%
{\protect \APACyear {1965}}%
}]{%
bickel1965some}
\APACinsertmetastar {%
bickel1965some}%
\begin{APACrefauthors}%
Bickel, P\BPBI J.%
\end{APACrefauthors}%
\unskip\
\newblock
\APACrefYearMonthDay{1965}{}{}.
\newblock
{\BBOQ}\APACrefatitle {On some robust estimates of location} {On some robust
  estimates of location}.{\BBCQ}
\newblock
\APACjournalVolNumPages{The Annals of Mathematical
  Statistics}{36}{3}{847--858}.
\PrintBackRefs{\CurrentBib}

\bibitem [\protect \citeauthoryear {%
Billingsley%
}{%
Billingsley%
}{%
{\protect \APACyear {1995}}%
}]{%
billingsley1995probability}
\APACinsertmetastar {%
billingsley1995probability}%
\begin{APACrefauthors}%
Billingsley, P.%
\end{APACrefauthors}%
\unskip\
\newblock
\APACrefYear{1995}.
\newblock
\APACrefbtitle {Probability and measure} {Probability and measure}.
\newblock
\APACaddressPublisher{}{John Wiley \& Sons}.
\PrintBackRefs{\CurrentBib}

\bibitem [\protect \citeauthoryear {%
{\v{C}}{\'\i}{\v{z}}ek%
}{%
{\v{C}}{\'\i}{\v{z}}ek%
}{%
{\protect \APACyear {2016}}%
}]{%
vcivzek2016generalized}
\APACinsertmetastar {%
vcivzek2016generalized}%
\begin{APACrefauthors}%
{\v{C}}{\'\i}{\v{z}}ek, P.%
\end{APACrefauthors}%
\unskip\
\newblock
\APACrefYearMonthDay{2016}{}{}.
\newblock
{\BBOQ}\APACrefatitle {Generalized method of trimmed moments} {Generalized
  method of trimmed moments}.{\BBCQ}
\newblock
\APACjournalVolNumPages{Journal of Statistical Planning and
  Inference}{171}{}{63--78}.
\PrintBackRefs{\CurrentBib}

\bibitem [\protect \citeauthoryear {%
Clark%
}{%
Clark%
}{%
{\protect \APACyear {1977}}%
}]{%
clark1977non}
\APACinsertmetastar {%
clark1977non}%
\begin{APACrefauthors}%
Clark, R.%
\end{APACrefauthors}%
\unskip\
\newblock
\APACrefYearMonthDay{1977}{}{}.
\newblock
{\BBOQ}\APACrefatitle {Non-parametric estimation of a smooth regression
  function} {Non-parametric estimation of a smooth regression function}.{\BBCQ}
\newblock
\APACjournalVolNumPages{Journal of the Royal Statistical Society: Series B
  (Methodological)}{39}{1}{107--113}.
\PrintBackRefs{\CurrentBib}

\bibitem [\protect \citeauthoryear {%
De~Vito%
, Massera%
, Piga%
, Martinotto%
\BCBL {}\ \BBA {} Di~Francia%
}{%
De~Vito%
\ \protect \BOthers {.}}{%
{\protect \APACyear {2008}}%
}]{%
de2008field}
\APACinsertmetastar {%
de2008field}%
\begin{APACrefauthors}%
De~Vito, S.%
, Massera, E.%
, Piga, M.%
, Martinotto, L.%
\BCBL {}\ \BBA {} Di~Francia, G.%
\end{APACrefauthors}%
\unskip\
\newblock
\APACrefYearMonthDay{2008}{}{}.
\newblock
{\BBOQ}\APACrefatitle {On field calibration of an electronic nose for benzene
  estimation in an urban pollution monitoring scenario} {On field calibration
  of an electronic nose for benzene estimation in an urban pollution monitoring
  scenario}.{\BBCQ}
\newblock
\APACjournalVolNumPages{Sensors and Actuators B: Chemical}{129}{2}{750--757}.
\PrintBackRefs{\CurrentBib}

\bibitem [\protect \citeauthoryear {%
Devroye%
}{%
Devroye%
}{%
{\protect \APACyear {1981}}%
}]{%
devroye1981laws}
\APACinsertmetastar {%
devroye1981laws}%
\begin{APACrefauthors}%
Devroye, L.%
\end{APACrefauthors}%
\unskip\
\newblock
\APACrefYearMonthDay{1981}{}{}.
\newblock
{\BBOQ}\APACrefatitle {Laws of the iterated logarithm for order statistics of
  uniform spacings} {Laws of the iterated logarithm for order statistics of
  uniform spacings}.{\BBCQ}
\newblock
\APACjournalVolNumPages{The Annals of Probability}{9}{5}{860--867}.
\PrintBackRefs{\CurrentBib}

\bibitem [\protect \citeauthoryear {%
Dhar%
}{%
Dhar%
}{%
{\protect \APACyear {2016}}%
}]{%
dhar2016trimmed}
\APACinsertmetastar {%
dhar2016trimmed}%
\begin{APACrefauthors}%
Dhar, S\BPBI S.%
\end{APACrefauthors}%
\unskip\
\newblock
\APACrefYearMonthDay{2016}{}{}.
\newblock
{\BBOQ}\APACrefatitle {Trimmed mean isotonic regression} {Trimmed mean isotonic
  regression}.{\BBCQ}
\newblock
\APACjournalVolNumPages{Scandinavian Journal of Statistics}{43}{1}{202--212}.
\PrintBackRefs{\CurrentBib}

\bibitem [\protect \citeauthoryear {%
Dhar%
\ \BBA {} Chaudhuri%
}{%
Dhar%
\ \BBA {} Chaudhuri%
}{%
{\protect \APACyear {2009}}%
}]{%
dhar2009comparison}
\APACinsertmetastar {%
dhar2009comparison}%
\begin{APACrefauthors}%
Dhar, S\BPBI S.%
\BCBT {}\ \BBA {} Chaudhuri, P.%
\end{APACrefauthors}%
\unskip\
\newblock
\APACrefYearMonthDay{2009}{}{}.
\newblock
{\BBOQ}\APACrefatitle {A comparison of robust estimators based on two types of
  trimming} {A comparison of robust estimators based on two types of
  trimming}.{\BBCQ}
\newblock
\APACjournalVolNumPages{Advances in Statistical Analysis}{93}{2}{151--158}.
\PrintBackRefs{\CurrentBib}

\bibitem [\protect \citeauthoryear {%
Dhar%
\ \BBA {} Chaudhuri%
}{%
Dhar%
\ \BBA {} Chaudhuri%
}{%
{\protect \APACyear {2012}}%
}]{%
dhar2012derivatives}
\APACinsertmetastar {%
dhar2012derivatives}%
\begin{APACrefauthors}%
Dhar, S\BPBI S.%
\BCBT {}\ \BBA {} Chaudhuri, P.%
\end{APACrefauthors}%
\unskip\
\newblock
\APACrefYearMonthDay{2012}{}{}.
\newblock
{\BBOQ}\APACrefatitle {On the derivatives of the trimmed mean} {On the
  derivatives of the trimmed mean}.{\BBCQ}
\newblock
\APACjournalVolNumPages{Statistica Sinica}{22}{2}{655--679}.
\PrintBackRefs{\CurrentBib}

\bibitem [\protect \citeauthoryear {%
Fan%
\ \BBA {} Gijbels%
}{%
Fan%
\ \BBA {} Gijbels%
}{%
{\protect \APACyear {1996}}%
}]{%
fan1996local}
\APACinsertmetastar {%
fan1996local}%
\begin{APACrefauthors}%
Fan, J.%
\BCBT {}\ \BBA {} Gijbels, I.%
\end{APACrefauthors}%
\unskip\
\newblock
\APACrefYear{1996}.
\newblock
\APACrefbtitle {Local Polynomial Modelling and Its Applications: Monographs on
  Statistics and Applied Probability 66} {Local polynomial modelling and its
  applications: Monographs on statistics and applied probability 66}\
  (\BVOL~66).
\newblock
\APACaddressPublisher{}{Chapman and Hall; London}.
\PrintBackRefs{\CurrentBib}

\bibitem [\protect \citeauthoryear {%
Gasser%
\ \BBA {} M{\"u}ller%
}{%
Gasser%
\ \BBA {} M{\"u}ller%
}{%
{\protect \APACyear {1979}}%
}]{%
gasser1979kernel}
\APACinsertmetastar {%
gasser1979kernel}%
\begin{APACrefauthors}%
Gasser, T.%
\BCBT {}\ \BBA {} M{\"u}ller, H\BHBI G.%
\end{APACrefauthors}%
\unskip\
\newblock
\APACrefYearMonthDay{1979}{}{}.
\newblock
{\BBOQ}\APACrefatitle {Kernel estimation of regression functions} {Kernel
  estimation of regression functions}.{\BBCQ}
\newblock
\BIn{} \APACrefbtitle {Smoothing techniques for curve estimation} {Smoothing
  techniques for curve estimation}\ (\BPGS\ 23--68).
\newblock
\APACaddressPublisher{}{Springer}.
\PrintBackRefs{\CurrentBib}

\bibitem [\protect \citeauthoryear {%
Hogg%
}{%
Hogg%
}{%
{\protect \APACyear {1967}}%
}]{%
hogg1967some}
\APACinsertmetastar {%
hogg1967some}%
\begin{APACrefauthors}%
Hogg, R\BPBI V.%
\end{APACrefauthors}%
\unskip\
\newblock
\APACrefYearMonthDay{1967}{}{}.
\newblock
{\BBOQ}\APACrefatitle {Some observations on robust estimation} {Some
  observations on robust estimation}.{\BBCQ}
\newblock
\APACjournalVolNumPages{Journal of the American Statistical
  Association}{62}{320}{1179--1186}.
\PrintBackRefs{\CurrentBib}

\bibitem [\protect \citeauthoryear {%
Huber%
}{%
Huber%
}{%
{\protect \APACyear {1981}}%
}]{%
huber1981robust}
\APACinsertmetastar {%
huber1981robust}%
\begin{APACrefauthors}%
Huber, P\BPBI J.%
\end{APACrefauthors}%
\unskip\
\newblock
\APACrefYearMonthDay{1981}{}{}.
\newblock
{\BBOQ}\APACrefatitle {Robust statistics} {Robust statistics}.{\BBCQ}
\newblock
\APACjournalVolNumPages{Wiley Series in Probability and Mathematical
  Statistics, New York: Wiley,| c1981}{}{}{}.
\PrintBackRefs{\CurrentBib}

\bibitem [\protect \citeauthoryear {%
Jaeckel%
}{%
Jaeckel%
}{%
{\protect \APACyear {1971}}%
}]{%
jaeckel1971some}
\APACinsertmetastar {%
jaeckel1971some}%
\begin{APACrefauthors}%
Jaeckel, L\BPBI A.%
\end{APACrefauthors}%
\unskip\
\newblock
\APACrefYearMonthDay{1971}{}{}.
\newblock
{\BBOQ}\APACrefatitle {Some flexible estimates of location} {Some flexible
  estimates of location}.{\BBCQ}
\newblock
\APACjournalVolNumPages{The Annals of Mathematical
  Statistics}{42}{5}{1540--1552}.
\PrintBackRefs{\CurrentBib}

\bibitem [\protect \citeauthoryear {%
Jure{\v{c}}kov{\'a}%
, Koenker%
\BCBL {}\ \BBA {} Welsh%
}{%
Jure{\v{c}}kov{\'a}%
\ \protect \BOthers {.}}{%
{\protect \APACyear {1994}}%
}]{%
jurevckova1994adaptive}
\APACinsertmetastar {%
jurevckova1994adaptive}%
\begin{APACrefauthors}%
Jure{\v{c}}kov{\'a}, J.%
, Koenker, R.%
\BCBL {}\ \BBA {} Welsh, A.%
\end{APACrefauthors}%
\unskip\
\newblock
\APACrefYearMonthDay{1994}{}{}.
\newblock
{\BBOQ}\APACrefatitle {Adaptive choice of trimming proportions} {Adaptive
  choice of trimming proportions}.{\BBCQ}
\newblock
\APACjournalVolNumPages{Annals of the Institute of Statistical
  Mathematics}{46}{4}{737--755}.
\PrintBackRefs{\CurrentBib}

\bibitem [\protect \citeauthoryear {%
Jureckov{\'a}%
\ \BBA {} Proch{\'a}zka%
}{%
Jureckov{\'a}%
\ \BBA {} Proch{\'a}zka%
}{%
{\protect \APACyear {1994}}%
}]{%
jureckova1994regression}
\APACinsertmetastar {%
jureckova1994regression}%
\begin{APACrefauthors}%
Jureckov{\'a}, J.%
\BCBT {}\ \BBA {} Proch{\'a}zka, B.%
\end{APACrefauthors}%
\unskip\
\newblock
\APACrefYearMonthDay{1994}{}{}.
\newblock
{\BBOQ}\APACrefatitle {Regression quantiles and trimmed least squares estimator
  in nonlinear regression model} {Regression quantiles and trimmed least
  squares estimator in nonlinear regression model}.{\BBCQ}
\newblock
\APACjournalVolNumPages{Journal of Nonparametric Statistics}{3}{3}{201--222}.
\PrintBackRefs{\CurrentBib}

\bibitem [\protect \citeauthoryear {%
Kaya%
, T{\"u}fekci%
\BCBL {}\ \BBA {} G{\"u}rgen%
}{%
Kaya%
\ \protect \BOthers {.}}{%
{\protect \APACyear {2012}}%
}]{%
kaya2012local}
\APACinsertmetastar {%
kaya2012local}%
\begin{APACrefauthors}%
Kaya, H.%
, T{\"u}fekci, P.%
\BCBL {}\ \BBA {} G{\"u}rgen, F\BPBI S.%
\end{APACrefauthors}%
\unskip\
\newblock
\APACrefYearMonthDay{2012}{}{}.
\newblock
{\BBOQ}\APACrefatitle {Local and global learning methods for predicting power
  of a combined gas \& steam turbine} {Local and global learning methods for
  predicting power of a combined gas \& steam turbine}.{\BBCQ}
\newblock
\BIn{} \APACrefbtitle {Proceedings of the International Conference on Emerging
  Trends in Computer and Electronics Engineering ICETCEE} {Proceedings of the
  international conference on emerging trends in computer and electronics
  engineering icetcee}\ (\BPGS\ 13--18).
\PrintBackRefs{\CurrentBib}

\bibitem [\protect \citeauthoryear {%
Nadaraya%
}{%
Nadaraya%
}{%
{\protect \APACyear {1965}}%
}]{%
nadaraya1965non}
\APACinsertmetastar {%
nadaraya1965non}%
\begin{APACrefauthors}%
Nadaraya, E.%
\end{APACrefauthors}%
\unskip\
\newblock
\APACrefYearMonthDay{1965}{}{}.
\newblock
{\BBOQ}\APACrefatitle {On non-parametric estimates of density functions and
  regression curves} {On non-parametric estimates of density functions and
  regression curves}.{\BBCQ}
\newblock
\APACjournalVolNumPages{Theory of Probability \& Its
  Applications}{10}{1}{186--190}.
\PrintBackRefs{\CurrentBib}

\bibitem [\protect \citeauthoryear {%
Park%
, Lee%
\BCBL {}\ \BBA {} Chang%
}{%
Park%
\ \protect \BOthers {.}}{%
{\protect \APACyear {2015}}%
}]{%
park2015robust}
\APACinsertmetastar {%
park2015robust}%
\begin{APACrefauthors}%
Park, C\BHBI H.%
, Lee, S.%
\BCBL {}\ \BBA {} Chang, J\BHBI H.%
\end{APACrefauthors}%
\unskip\
\newblock
\APACrefYearMonthDay{2015}{}{}.
\newblock
{\BBOQ}\APACrefatitle {Robust closed-form time-of-arrival source localization
  based on $\alpha$-trimmed mean and Hodges--Lehmann estimator under NLOS
  environments} {Robust closed-form time-of-arrival source localization based
  on $\alpha$-trimmed mean and hodges--lehmann estimator under nlos
  environments}.{\BBCQ}
\newblock
\APACjournalVolNumPages{Signal Processing}{111}{}{113--123}.
\PrintBackRefs{\CurrentBib}

\bibitem [\protect \citeauthoryear {%
Priestley%
\ \BBA {} Chao%
}{%
Priestley%
\ \BBA {} Chao%
}{%
{\protect \APACyear {1972}}%
}]{%
priestley1972non}
\APACinsertmetastar {%
priestley1972non}%
\begin{APACrefauthors}%
Priestley, M\BPBI B.%
\BCBT {}\ \BBA {} Chao, M.%
\end{APACrefauthors}%
\unskip\
\newblock
\APACrefYearMonthDay{1972}{}{}.
\newblock
{\BBOQ}\APACrefatitle {Non-parametric function fitting} {Non-parametric
  function fitting}.{\BBCQ}
\newblock
\APACjournalVolNumPages{Journal of the Royal Statistical Society: Series B
  (Methodological)}{34}{3}{385--392}.
\PrintBackRefs{\CurrentBib}

\bibitem [\protect \citeauthoryear {%
Rousseeuw%
\ \BBA {} Leroy%
}{%
Rousseeuw%
\ \BBA {} Leroy%
}{%
{\protect \APACyear {2005}}%
}]{%
rousseeuw2005robust}
\APACinsertmetastar {%
rousseeuw2005robust}%
\begin{APACrefauthors}%
Rousseeuw, P\BPBI J.%
\BCBT {}\ \BBA {} Leroy, A\BPBI M.%
\end{APACrefauthors}%
\unskip\
\newblock
\APACrefYear{2005}.
\newblock
\APACrefbtitle {Robust regression and outlier detection} {Robust regression and
  outlier detection}\ (\BVOL~589).
\newblock
\APACaddressPublisher{}{John wiley \& sons}.
\PrintBackRefs{\CurrentBib}

\bibitem [\protect \citeauthoryear {%
Serfling%
}{%
Serfling%
}{%
{\protect \APACyear {2009}}%
}]{%
serfling2009approximation}
\APACinsertmetastar {%
serfling2009approximation}%
\begin{APACrefauthors}%
Serfling, R\BPBI J.%
\end{APACrefauthors}%
\unskip\
\newblock
\APACrefYear{2009}.
\newblock
\APACrefbtitle {Approximation theorems of mathematical statistics}
  {Approximation theorems of mathematical statistics}\ (\BVOL~162).
\newblock
\APACaddressPublisher{}{John Wiley \& Sons}.
\PrintBackRefs{\CurrentBib}

\bibitem [\protect \citeauthoryear {%
Silverman%
}{%
Silverman%
}{%
{\protect \APACyear {1986}}%
}]{%
silverman1986density}
\APACinsertmetastar {%
silverman1986density}%
\begin{APACrefauthors}%
Silverman, B\BPBI W.%
\end{APACrefauthors}%
\unskip\
\newblock
\APACrefYear{1986}.
\newblock
\APACrefbtitle {Density Estimation for Statistics and Data Analysis} {Density
  estimation for statistics and data analysis}\ (\BVOL~26).
\newblock
\APACaddressPublisher{}{CRC Press}.
\PrintBackRefs{\CurrentBib}

\bibitem [\protect \citeauthoryear {%
Stigler%
}{%
Stigler%
}{%
{\protect \APACyear {1973}}%
}]{%
stigler1973asymptotic}
\APACinsertmetastar {%
stigler1973asymptotic}%
\begin{APACrefauthors}%
Stigler, S\BPBI M.%
\end{APACrefauthors}%
\unskip\
\newblock
\APACrefYearMonthDay{1973}{}{}.
\newblock
{\BBOQ}\APACrefatitle {The asymptotic distribution of the trimmed mean} {The
  asymptotic distribution of the trimmed mean}.{\BBCQ}
\newblock
\APACjournalVolNumPages{The Annals of Statistics}{1}{3}{472--477}.
\PrintBackRefs{\CurrentBib}

\bibitem [\protect \citeauthoryear {%
Tsanas%
, Little%
, McSharry%
\BCBL {}\ \BBA {} Ramig%
}{%
Tsanas%
\ \protect \BOthers {.}}{%
{\protect \APACyear {2009}}%
}]{%
tsanas2009accurate}
\APACinsertmetastar {%
tsanas2009accurate}%
\begin{APACrefauthors}%
Tsanas, A.%
, Little, M\BPBI A.%
, McSharry, P\BPBI E.%
\BCBL {}\ \BBA {} Ramig, L\BPBI O.%
\end{APACrefauthors}%
\unskip\
\newblock
\APACrefYearMonthDay{2009}{}{}.
\newblock
{\BBOQ}\APACrefatitle {Accurate telemonitoring of Parkinson's disease
  progression by noninvasive speech tests} {Accurate telemonitoring of
  parkinson's disease progression by noninvasive speech tests}.{\BBCQ}
\newblock
\APACjournalVolNumPages{IEEE transactions on Biomedical
  Engineering}{57}{4}{884--893}.
\PrintBackRefs{\CurrentBib}

\bibitem [\protect \citeauthoryear {%
T{\"u}fekci%
}{%
T{\"u}fekci%
}{%
{\protect \APACyear {2014}}%
}]{%
tufekci2014prediction}
\APACinsertmetastar {%
tufekci2014prediction}%
\begin{APACrefauthors}%
T{\"u}fekci, P.%
\end{APACrefauthors}%
\unskip\
\newblock
\APACrefYearMonthDay{2014}{}{}.
\newblock
{\BBOQ}\APACrefatitle {Prediction of full load electrical power output of a
  base load operated combined cycle power plant using machine learning methods}
  {Prediction of full load electrical power output of a base load operated
  combined cycle power plant using machine learning methods}.{\BBCQ}
\newblock
\APACjournalVolNumPages{International Journal of Electrical Power \& Energy
  Systems}{60}{}{126--140}.
\PrintBackRefs{\CurrentBib}

\bibitem [\protect \citeauthoryear {%
Wang%
, Lin%
\BCBL {}\ \BBA {} Tang%
}{%
Wang%
\ \protect \BOthers {.}}{%
{\protect \APACyear {2019}}%
}]{%
wang2019robust}
\APACinsertmetastar {%
wang2019robust}%
\begin{APACrefauthors}%
Wang, W.%
, Lin, N.%
\BCBL {}\ \BBA {} Tang, X.%
\end{APACrefauthors}%
\unskip\
\newblock
\APACrefYearMonthDay{2019}{}{}.
\newblock
{\BBOQ}\APACrefatitle {Robust two-sample test of high-dimensional mean vectors
  under dependence} {Robust two-sample test of high-dimensional mean vectors
  under dependence}.{\BBCQ}
\newblock
\APACjournalVolNumPages{Journal of Multivariate Analysis}{169}{}{312--329}.
\PrintBackRefs{\CurrentBib}

\bibitem [\protect \citeauthoryear {%
Watson%
}{%
Watson%
}{%
{\protect \APACyear {1964}}%
}]{%
watson1964smooth}
\APACinsertmetastar {%
watson1964smooth}%
\begin{APACrefauthors}%
Watson, G\BPBI S.%
\end{APACrefauthors}%
\unskip\
\newblock
\APACrefYearMonthDay{1964}{}{}.
\newblock
{\BBOQ}\APACrefatitle {Smooth regression analysis} {Smooth regression
  analysis}.{\BBCQ}
\newblock
\APACjournalVolNumPages{Sankhy{\=a}: The Indian Journal of Statistics, Series
  A}{26}{4}{359--372}.
\PrintBackRefs{\CurrentBib}

\bibitem [\protect \citeauthoryear {%
Welsh%
}{%
Welsh%
}{%
{\protect \APACyear {1987}}%
}]{%
welsh1987trimmed}
\APACinsertmetastar {%
welsh1987trimmed}%
\begin{APACrefauthors}%
Welsh, A.%
\end{APACrefauthors}%
\unskip\
\newblock
\APACrefYearMonthDay{1987}{}{}.
\newblock
{\BBOQ}\APACrefatitle {The trimmed mean in the linear model} {The trimmed mean
  in the linear model}.{\BBCQ}
\newblock
\APACjournalVolNumPages{The Annals of Statistics}{15}{1}{20--36}.
\PrintBackRefs{\CurrentBib}

\end{thebibliography}

\end{document}